\documentclass[12pt,A4paper]{article}
\usepackage{graphicx} 
\usepackage[utf8]{inputenc}
\usepackage{amsthm,amsmath,amssymb,graphicx,enumerate,comment}
\usepackage{xcolor,colortbl}
\usepackage{pdfcomment} 

\newtheorem{theorem}{Theorem}[section]
\newtheorem{lemma}[theorem]{Lemma}
\newtheorem{proposition}[theorem]{Proposition}
\newtheorem{corollary}[theorem]{Corollary}

\newtheorem{conjecture}{Conjecture}
\theoremstyle{definition}
\newtheorem{definition}[theorem]{Definition}

\newtheorem{remark}[theorem]{Remark}

\newcommand{\ZZ}{\mathbb{Z}}
\newcommand{\B}{{\cal B}}

\newcommand{\arxiv}[2]{\href{https://arxiv.org/abs/#1}{\texttt{arXiv:#1}} \texttt{[#2]}}

\title{On the hamiltonicity problem of bicirculants: \\
a reduction to cyclic Haar graphs}
\author{Simona\,Bonvicini\thanks{Dipartimento di Scienze Fisiche, Informatiche e Matematiche, 
Universit\`a di Modena e Reggio Emilia, via Campi 213/b, 41126 Modena, Italy.},
\ Toma\v z\, Pisanski\thanks{University of Primorska  and Institute of Mathematics, Physics and Mechanics, Slovenia},
\ Arjana\,\v Zitnik\thanks{University of Ljubljana and Institute of Mathematics, Physics and Mechanics, Ljubljana, Slovenia}}
\date{\today}

\begin{document}

\maketitle

\begin{abstract}

A \emph{bicirculant} is a regular graph that admits an automorphism having two vertex-orbits of the same size. 
A  bicirculant can be described as follows. Given an integer $m \ge 1$ and sets $R, S, T \subseteq \ZZ_m$ such that $R=-R$, $T=-T$, 
$0 \not\in R \cup T$ and $0 \in S$, the graph $B(m;R,S,T)$ has vertex set $V=\{u_0,\dots,u_{m-1},v_0,\dots,v_m-1\}$ and edge set 
$E=\{u_iu_{i+j}| \ i \in\ZZ_m, j \in R\} \cup 
      \{v_iv_{i+j}| \ i \in\ZZ_m, j \in T\} \cup
      \{u_iv_{i+j}| \ i \in\ZZ_m, j \in S\}.$
Bicirculant graphs with $R=T=\emptyset$ are known as \emph{cyclic Haar graphs}.

In 2025 we conjectured that the only non-hamiltonian graphs among regular connected bicirculants of degree more than one are the generalized Petersen graphs $G(m,2)$ with $m \equiv 5 \pmod 6$. Recently we have verified the conjecture for bicirculants with $|S|\le 2$ 
and for bicirculants with $|R|=|T|$ odd. 

In this paper we show that the conjecture holds for all bicirculants with 
$|S| \le 3$ and for all bicirculants with 
$|S| \ge 4$ and $m/\gcd(m, S)$ even. 
As a byproduct of our results, we prove that every connected bicirculant graph on $2m$ vertices with 
$|S| \ge 4$ is hamiltonian 
for even  $m< 9\, 240$,  and for odd $m< 3\,465$.
Finally, we show that the existence of a hamilton cycle in every connected cyclic Haar graph of valence at least $4$ 
implies that every connected bicirculant graph of valence 
at least $4$ is hamiltonian. 
\end{abstract}

\textbf{Keywords:} bicirculant graph, hamilton cycle,  
cyclic Haar graph.

\textbf{Math Subj Class (2020)}: 
05C45, 
05C25, 
05C76,  
05C70,  
05E18. 

\section{Motivation and main results}

A \emph{bicirculant} is a regular graph that is a cyclic cover over a two-vertex (pre)-graph. In an equivalent way, bicirculants are defined to be regular graphs that admit an automorphism having two vertex-orbits of the same size \cite{Pi2007}. 

Bicirculants are a special class of polycirculants  or  $k$-circulants.  A $k$-\emph{circulant} is a (regular) graph possessing an automorphism with $k$ orbits of the same size.
For $k = 1$ we get the known circulant graphs, for $k=2$ bicirculants, for $k = 3$ tricirculants, etc. 

Basic motivation for the study of polycirculants comes from an old problem of Maru\v{s}i\v{c} \cite{Ma1981}, asking whether every vertex-transitive graph is a polycirculant. The problem is still not completely solved. Another motivation comes from the study of polycyclic configurations, first introduced in \cite{BoPi2003}. Namely, the Levi graph of a polycyclic configuration is a polycirculant. For more on configurations, see \cite{Gr2009,PiSe2013}. Recently, the fact that the smallest semi-symmetric graph is a polycirculant was used to find its unit-distance realization in the plane \cite{BeGePi2023a}. Polycirculants and bicirculants in particular, admit several generalizations. One of them, where the cyclic cover is replaced by a more general regular cover, has been studied, for instance in \cite{CoEsPi2018}. Others are motivated by generalizing some well-known families of bicirculants, like the generalized Petersen graphs \cite{JeJaPi2020}.

A typical bicirculant can be described as follows; see for example \cite{MALNIC2007891}. Given an integer $m \ge 1$ and sets $R,S,T \subseteq \ZZ_m$
such that $R=-R$, $T=-T$, 
$0 \not\in R \cup T$   and $0 \in S$, 
the graph $B(m;R,S,T)$ has vertex set 
$V=V_1 \cup V_2$, where $V_1=\{u_0,\dots,u_{m-1}\}$ and $V_2=\{v_0,\dots,v_{m-1}\}$, and edge set 
$$E=\{u_iu_{i+j}| \ i \in\ZZ_m, j \in R\} \cup 
      \{v_iv_{i+j}| \ i \in\ZZ_m, j \in T\} \cup
      \{u_iv_{i+j}| \ i \in\ZZ_m, j \in S\}.$$ 
Obviously, the mapping $\alpha:V \to V$, defined by $\alpha(u_i)=u_{i+1}$, $\alpha(v_i)=v_{i+1}$
is an automorphism of $B(m;R,S,T)$, having two vertex-orbits of the same size.

We call the vertices from $V_1$ the \emph{outer vertices} and the vertices from $V_2$ the \emph{inner vertices}.
There are three types of edges: the edges  incident  to two outer vertices are called \emph{outer edges},
the edges   incident to two inner vertices are called \emph{inner edges}, and edges connecting an outer vertex to an inner vertex are called  \emph{spokes}. Specifically, the edges $u_iu_{i+a}$, $i \in \ZZ_m$, $a \in R$, are called \emph{outer edges of type} $a$, the edges $v_iv_{i+b}$, $i \in \ZZ_m$, $b \in T$, are called \emph{inner edges of type} $b$ and the edges $u_iv_{i+c}$, $i \in \ZZ_m$, $c \in S$, are called \emph{spokes of type} $c$. 
The subgraphs of $B(m;R,S,T)$ induced by the outer and inner  vertices are circulant graphs with generating sets $R$  and $T$, respectively. 
The bipartite subgraph of $B(m;R,S,T)$, spanned by the spokes, is  a \emph{cyclic Haar graph}, which we denote by $H(m;S)$; so $H(m;S)= B(m;\emptyset,S,\emptyset)$. 
The family of cyclic Haar graphs was introduced by Hladnik, Maru\v si\v c and Pisanski in  2002 \cite{HlMaPi2002}.

Many well-known graphs are bicirculants. For example, the generalized Petersen graphs are bicirculants, of valence 3. More generally, for $m \ge 3$ and $0 < j,k < m/2$, an $I$-graph $I(m;j,k)$ is a bicirculant $B(m;\{j,-j\}, \{0\},\{k,-k\}$ \cite{BoPi2017,Foster} .
A generalized rose window graph   is a bicirculant graph of degree 4, obtained from an $I$-graph by adding an additional set of spokes  \cite{wilsonRW, BoPiZi2025}.
Similarly, the  pentavalent generalized  Taba\v cjn graphs  \cite{Taba1,Taba2} are obtained from the generalized rose window graphs by adding an additional set of spokes.
Also Cayley graphs of dihedral groups are bicirculant graphs $B(m;R,S,T)$ with $R=T$.
The structured yet diverse nature of bicirculant graphs provides a particularly suitable setting for the study of hamilton cycles, which is the focus of this paper.

The existence of hamilton cycles was already considered in the past for some special classes of bicirculant graphs.
In 1983, Brian Alspach proved that the only non-hamiltonian generalized Petersen graphs are $G(m,2)$ where $m \equiv 5 \pmod 6$ \cite{Al11983}. In \cite{BoPiZi2025_2ndpaper} we named these graphs the \emph{Alspach generalized Petersen graphs}. The classification of Brian Alspach of hamiltonian generalized Petersen graphs has been extended by Bonvicini and Pisanski to $I$-graphs and  no new non-hamiltonian examples have been found \cite{BoPi2017}. In a recent paper \cite{BoPiZi2025} we proved that all generalized rose window graphs are hamiltonian and we proved that many more bicirculant graphs are hamiltonian in \cite{BoPiZi2025_2ndpaper}. Based on these results, we believe that the following conjecture (already posed in \cite{BoPiZi2025}) is true.

\begin{conjecture} \label{conject}
Every connected bicirculant, except for $K_2$ and the generalized Petersen graphs $G(m, 2)$ with $m\equiv 5\pmod 6$,  is hamiltonian. 
\end{conjecture}

The aim of this paper is to contribute further to the solution Conjecture \ref{conject}.
If true, this would imply a contribution  towards resolving Lov\'asz conjecture that every connected vertex transitive graph, except for the five known exceptions, has a hamilton cycle. Namely, it would imply that that all vertex-transitive  bicirculants except for the complete graph $K_2$ and the Petersen graph are hamiltonian. 

\medskip

In this paper we give a partial solution to Conjecture \ref{conject}.
Our approach is based on a new notion of uniform and non-uniform representations of bicirculant graphs that we introduce.
The following are our main results.

\begin{theorem}\label{th_bicirculant_s3}
A connected bicirculant graph whose vertices are incident to $s\le 3$ spokes is hamiltonian, except for the complete graph
$K_2$ and the Alspach generalized Petersen graphs.
\end{theorem}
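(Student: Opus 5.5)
The plan is to reduce to the case $|S|=3$, since the abstract recalls that the conjecture is already verified for $|S|\le 2$ \cite{BoPiZi2025_2ndpaper}; so let $S=\{0,c_1,c_2\}$. The first step is a normalization. Replacing $v_i$ by $v_{i+t}$ shifts $S$ to $S-t$ and leaves $R$ and $T$ unchanged. Multiplying all indices by a unit of $\ZZ_m$ gives an isomorphic graph. Swapping the roles of the outer and inner vertices replaces $(R,S,T)$ by $(T,-S,R)$. Together these let us assume, for example, that $|R|\ge |T|$. If $|R|=|T|$ is odd, the case is already settled in \cite{BoPiZi2025_2ndpaper}, so we may also exclude it.

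Next, consider the spoke subgraph $H(m;S)$. It is a $3$-regular cyclic Haar graph, and it is a disjoint union of $d=\gcd(m,c_1,c_2)$ isomorphic copies of $H(m/d;S/d)$. When $d=1$ it is connected. In that case I would show directly that $H(m;S)$ is hamiltonian: it is a connected cubic bipartite Cayley-type graph (a dihedral-like cover), and there are known results on hamiltonicity of cubic cyclic Haar graphs. A hamilton cycle of $H(m;S)$ is a spanning subgraph of $B(m;R,S,T)$, so we are done in that case. Alternatively, this follows from whatever cyclic Haar result appears later in the paper.

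So the main work is the disconnected case $d>1$. Here connectivity of $B$ forces $R\cup T$ to generate $\ZZ_m$ modulo $d$. My plan is to build the hamilton cycle as in the $I$-graph and rose-window arguments. First take a hamilton cycle (or a hamilton path, if needed) in each component $H_k\cong H(m/d;S/d)$, where $k\in\ZZ_d$. Then glue consecutive components using one outer edge of type $a\in R$, or an inner edge of type $b\in T$, with $a$ or $b$ not divisible by $d$. The gluing is a standard $2$-edge exchange. We remove an edge $u_iv_{i+c}$ from $H_k$ and the translate $u_{i+a}v_{i+a+c}$ from $H_{k+a}$. We then try to add $u_iu_{i+a}$ and $v_{i+c}v_{i+a+c}$. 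The second edge is only available if $a\in T$ too, so in general we will instead splice using two outer edges. This needs the hamilton cycles of the components to be chosen compatibly, namely as translates of a single cycle, since $\alpha$ maps components to components. Translation invariance makes the cycles in consecutive components parallel, so the exchange merges them. Iterating along the cyclic order $k, k+a, k+2a,\dots$ merges all the cycles when $a$ generates $\ZZ_d$.

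When no single type in $R$ or $T$ generates $\ZZ_d$, I would glue along a spanning tree of the quotient on $\ZZ_d$, using several types. The obstacle is ensuring that the chosen exchange edges in one component are not used twice.

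I expect the hardest step to be the case where only inner and outer edges of different types are available for gluing, and the exchange needs a matching of two parallel edges. The cubic case has little slack, and the generalized Petersen–like exceptions live exactly at such small valences. For this step I would first try exploiting that each component cycle of a $3$-regular Haar graph uses, at each vertex, two of the three spoke types. This gives freedom to choose which spoke is omitted, and then a parity and case analysis on $m/d$ should leave only finitely many small configurations, which we would check by computer.
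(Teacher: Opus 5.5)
The parts of your plan that match the paper are fine: the reduction to $|S|=3$, the connected case $\gcd(m,S)=1$ via Alspach--Zhang, and the case $m/2\in R,T$. Your exchange also works when one type $a$ lies in both $R$ and $T$. The genuine gap is the gluing step in the disconnected case.

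Your fallback of splicing two component cycles ``using two outer edges'' is impossible. Each component $H_k$ of $H(m;S)$ is bipartite, with its outer vertices $U_k$ and inner vertices $V_k$ as colour classes of equal size. So a hamilton cycle of $H_k$ has no edge with two outer ends, and removing spokes while adding only outer edges leaves inner vertices of degree one. More generally, let $Z$ be any hamilton cycle of $G$. Summing $Z$-degrees over $U_k$ and over $V_k$ gives $2|U_k|=(\text{spokes of } Z \text{ in } H_k)+(\text{outer-edge ends in } U_k)$ and $2|V_k|=(\text{spokes of } Z \text{ in } H_k)+(\text{inner-edge ends in } V_k)$. Hence every component is entered equally often through outer and through inner edges, and any merge must pair outer edges of some type $a$ with inner edges of some type $b$. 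A two-edge exchange between two components therefore requires $b\equiv\pm a\pmod{\gcd(m,S)}$. Even then, if $b\ne\pm a$, the inner edge at $v_y$ goes to $v_{y+b}$ rather than to the translate $v_{y+a}$, so translates of one cycle are not ``parallel'' and your exchange fails. The paper needs explicit path constructions here, and for $\gcd(m,S)$ even the brick-product lemma (Lemma~\ref{lem_ab_coprime3}).

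The core of the theorem is the case where every element of $R\cup T$ is coprime to $\gcd(m,S)$ and $b\not\equiv\pm a$ for all $a\in R$, $b\in T$. Then type-$a$ and type-$b$ edges link \emph{different} pairs of components, and no local exchange exists at all. You defer this to a parity/case analysis and a computer check of ``finitely many small configurations''. But this is an infinite family (unbounded $m$, $\gcd(m,S)$ and multiplier $h$ with $b\equiv ha$), so no finite check can close it.

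The missing idea is a global construction. The paper arranges the components in a $(\lambda,\mu,\rho)$-non-uniform grid, with type $a$ moving along rows and type $b$ down columns (Lemma~\ref{lem_nonuniform_representation}). When $\rho>0$ it builds a hamilton cycle directly (Lemma~\ref{lem_extension_method1}(ii)). When $\rho=0$ it proceeds as follows:
\begin{itemize}
\item It deletes one spoke type $c$. The components of $B(m;a,S\smallsetminus\{c\},b)$ are then generalized rose window graphs, hamiltonian by Theorem~\ref{th_bicirculant_s2}.
\item They inherit a $\rho=0$ representation, so they also have a hamilton $u_0$--$u_b$ path by Lemma~\ref{lem_extension_method1}(i).
\item These components are linked through spokes of type $c$ by the $2$-hooked construction (Proposition~\ref{rem_2hooked2}).
\end{itemize}
Likewise, when some element of $R\cup T$ is not coprime to $\gcd(m,S)$, the paper uses the uniform-representation result (Theorem~\ref{th_main}). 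It does not use a spanning-tree gluing, whose edge-reuse problem you flag but do not resolve.
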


\begin{theorem}\label{cor_main}
Let   $G=B(m; R, S, T)$ be a connected bicirculant graph with $|S|\ge 4$ and $m/\gcd(m, S)$ even. Then $G$  is hamiltonian.
\end{theorem}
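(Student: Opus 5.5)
The plan is to reduce to the spoke subgraph. The cyclic Haar graph $H(m;S)$ is a spanning subgraph of $G$, so any hamilton cycle in a spanning subgraph of $H(m;S)$ (or of $G$) is enough. Put $d=\gcd(m,S)$, the gcd of $m$ and all elements of $S$. Since $0\in S$, the graph $H(m;S)$ splits into $d$ components. Component $\ell$ lives on $\{u_i,v_i : i\equiv \ell \pmod d\}$, and each component is isomorphic to $H(m/d;S/d)$. Here $S/d$ contains $0$, has the same size $|S|\ge 4$, and generates $\ZZ_{m/d}$. Write $n=m/d$, which is even by hypothesis.

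The first step is to find, in the connected cyclic Haar graph $H(n;S')$ with $S'=S/d$ and $|S'|\ge 4$, a hamilton cycle that uses a convenient edge pattern. Choose $a\in S'\setminus\{0\}$ with $\gcd(a,n)$ as small as possible, ideally a unit. The $2$-factor made of spokes of types $0$ and $a$ is a union of cycles $u_i v_i u_{i-a} v_{i-a}\cdots$, one for each coset of $\langle a\rangle$. These cycles can then be glued together with spokes of a third type $b$, using the standard switching of two parallel edges (replace $u_iv_{i}$ and $u_{i+b}v_{i+b}$ by $u_iv_{i+b}$ and $u_{i+b}v_{i+b}\cdots$, in the style of a circulant connection). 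This is the standard way to obtain hamiltonicity of connected $2$-generated Cayley-type bipartite graphs: $H(n;S')$ is the Cayley graph of the dihedral group $D_n$ with connection set of reflections, so I would rely on the known results for Cayley graphs of dihedral groups generated by reflections, or prove the gluing directly. Evenness of $n$ is used here to keep parity consistent when the cosets of $\langle a\rangle$ are merged along a cycle of cosets. The fourth spoke type gives the slack needed to handle the case where no pair of types alone generates $\ZZ_n$.

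The second step connects the $d$ components into one hamilton cycle of $G$, using outer and inner edges. Since $G$ is connected, $R\cup T\cup S$ generates $\ZZ_m$, so some $r\in R\cup T$ satisfies $r\not\equiv 0\pmod d$. Say $r\in R$. The edges $u_iu_{i+r}$ join component $\ell$ to component $\ell+r$, and the components are arranged cyclically modulo $d$ through steps of the various such $r$. I would choose the hamilton cycles $C_\ell$ of the components to be translates of one another, $C_\ell=\alpha^{\ell}(C_0)$, with $C_0$ containing consecutive outer vertices in the pattern $u_i\,v_j\,u_{i'}$. Then two translates can be merged by deleting $u_iv_j$ from $C_\ell$ and its translate $u_{i+r}v_{j+r}$ from $C_{\ell+r}$, and adding $u_iu_{i+r}$ and $v_jv_{j+r}$. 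This last edge is an inner edge, so this move needs $r\in T$ as well. When only $R$ contains a useful $r$, I would instead use a spoke-pair switch: delete $u_iv_j$ and $u_{i+r}v_{j'}$ with suitable $j'$, and add $u_iu_{i+r}$ together with a spoke from $v_j$ into component $\ell+r$. No such spoke exists, so the merge must rather be done pairwise along a tree of components while alternating the cycles. The evenness of $n$ makes the alternating $u$/$v$ structure available in every component.

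The main obstacle is this final merging when the connecting edges lie only in $R$ (or only in $T$), since a single outer edge cannot directly substitute for a spoke. My first attempt is to merge components in consecutive pairs with two outer edges $u_iu_{i+r}$ and $u_{i'}u_{i'+r}$. Here $u_i,u_{i'}$ are adjacent-through-a-$v$ on $C_\ell$, and the corresponding vertices on $C_{\ell+r}$ are removed by deleting the two spokes at a common inner vertex. I would then check that the resulting $2$-factor is a single cycle, and iterate along the chain $\ell,\ell+r,\ell+2r,\dots$. If needed, the two components at each step are traversed in opposite orientations, which evenness of $n$ permits.
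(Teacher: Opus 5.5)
Your first step is fine as long as you simply cite Theorem~\ref{th_generalized_dihedral}. Each component of $H(m;S)$ is $H(m/d;S/d)$ with $m/d$ even and valence at least $4$, so it is hamiltonian. The paper uses evenness only for this purpose, and also for $H(m;S')$ with $S'\subset S$.

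\textbf{The gap is in your second step.} Your square exchange (delete $u_iv_j$ and $u_{i+r}v_{j+r}$, add $u_iu_{i+r}$ and $v_jv_{j+r}$) needs an outer edge and an inner edge joining the same pair of components, essentially some $r\in R\cap T$. In general nothing of the kind exists. Take $G=B(20;\{\pm1\},\{0,5,10,15\},\{\pm2\})$, which satisfies every hypothesis ($d=5$, $m/d=4$). The components of $H(20;S)$ are five copies of $K_{4,4}$ indexed by residues mod $5$. Outer edges join component $\ell$ only to $\ell\pm1$, and inner edges join it only to $\ell\pm2$.

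Your fallback with two outer edges cannot be repaired, for the following reason. Suppose no element of $R\cup T$ is divisible by $d$. Then each component $H_\ell$ is balanced bipartite and contains no outer or inner edges. A hamilton cycle of $G$ therefore meets $V(H_\ell)$ in alternating paths. A path with two outer ends has one more outer than inner vertex and sends two outer edges out of $H_\ell$; symmetrically for inner ends; a path with mixed ends sends one of each. Summing over the paths, exactly as many outer edges as inner edges leave $H_\ell$. So no component can be attached to the rest by outer edges alone, and a leaf of your merging tree joined by two outer edges is impossible. Moreover, deleting both spokes at a common inner vertex isolates that vertex, so you do not even obtain a $2$-factor. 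Evenness of $n$ and reversing orientations do not affect this obstruction.

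\textbf{What is missing} is a routing that uses the outer direction and the inner direction simultaneously. The paper first disposes of $m/2\in R,T$ (Theorem~\ref{th_m/2}) and of elements of $R\cup T$ not coprime to $d$ (Theorem~\ref{th_main}, a rectangular grid). It then fixes $a\in R$, $b\in T$ and writes $b\equiv ha\pmod d$. This arranges the components in a non-uniform grid with outer edges horizontal and inner edges vertical (Lemma~\ref{lem_nonuniform_representation}). Hamilton paths in the components, with prescribed outer and inner endpoints, are chained through this grid (Lemma~\ref{lem_extension_method1}).

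Two cases need further tools:
\begin{itemize}
\item The case $b\equiv\pm a\pmod d$ is handled by Lemma~\ref{lem_ab_coprime3}: path chaining when $d$ is odd, and the brick product of Lemma~\ref{lem_brick2} when $d$ is even.
\item The case $\rho=0$ is not closed by the grid alone, since Lemma~\ref{lem_extension_method1}(i) only yields a hamilton $u_0$--$u_b$ path. Your example is such a case, with $h=2$, $\mu=2$, $\rho=0$. Here the paper removes a spoke type $c$ and takes hamilton $u_0$--$u_b$ paths in the components of $B(m;a,S\smallsetminus\{c\},b)$. It glues them with spokes of type $c$ via the $2$-hooked construction (Lemma~\ref{lem_ab_coprime2}). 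It inducts on $|S|$ from Theorem~\ref{th_bicirculant_s3}, and for $|S|\ge5$ keeps an element $c'$ in $S$ so that $m/\gcd(m,S')$ stays even.
\end{itemize}
None of this follows from pairwise merging of component cycles.
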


\begin{theorem}\label{th_cyclich_Haar_then_bicirculant_ham}
If every connected cyclic Haar graph of valence at least $4$ is hamiltonian, then every connected bicirculant graph $B(m;R,S,T)$ with $|S|\ge 4$ is hamiltonian.
\end{theorem}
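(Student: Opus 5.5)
The plan is to reduce the bicirculant $G=B(m;R,S,T)$ with $|S|\ge 4$ to a cyclic Haar graph that is a spanning subgraph of $G$, or is closely related to one. If $H(m;S)$ is connected, then it is a spanning subgraph of $G$ of valence $|S|\ge 4$. By hypothesis it is hamiltonian, so $G$ is hamiltonian and we are done. It therefore remains to treat the case where $H(m;S)$ is disconnected, and here connectivity of $G$ must come from $R$ or $T$.

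We normalize $S$ first. Replacing $S$ by $S-c$ for $c\in S$ does not change the isomorphism class of $B(m;R,S,T)$, since it amounts to relabelling $v_i\mapsto v_{i-c}$. So we may assume $0\in S$. Let $d=\gcd(m,S)$, where $S$ now generates the subgroup $d\ZZ_m$. Then $H(m;S)$ splits into $d$ isomorphic components. Each component is a copy of the cyclic Haar graph $H(m/d;S/d)$; it is connected, has valence $|S|\ge 4$, and is hamiltonian by hypothesis. The component $C_k$ contains the vertices $u_i$ and $v_i$ with $i\equiv k \pmod d$. Because $G$ is connected, $R\cup T$ must generate $\ZZ_d$ modulo $d$. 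Hence there is some $a\in R\cup T$ with $a\not\equiv 0 \pmod d$. By symmetry (swapping the roles of $u$ and $v$) we may take $a\in R$.

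The key step is to glue the hamilton cycles of the components $C_0,\dots,C_{d-1}$ together using outer edges $u_iu_{i+a}$. We choose the hamilton cycles equivariantly: pick one hamilton cycle $Z$ of $C_0$ and set $Z_k=\alpha^k(Z)$. Since $u_0u_j\in Z$ implies $u_ku_{j+k}\in Z_k$, the cycles $Z_k$ look alike. Each $Z$ contains an edge of the form $u_0v_s$ followed by $v_su_{s'}$. We then look for a standard \emph{switch}: find edges $xy\in Z_k$ and $x'y'\in Z_{k+a}$ such that $x=u_i$, $x'=u_{i+a}$, and $y,y'$ are joined by an edge of $G$. The natural candidate is $y=v_j$, $y'=v_{j+a}$, which requires $a\in T$, or a second outer edge. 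Replacing these two edges by the edges $xx'$ and $yy'$ merges the two cycles.

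To avoid needing an edge between $y$ and $y'$, we use the quotient structure instead. Consider the graph whose vertices are the components $C_k$, with $C_k$ adjacent to $C_{k+a}$; it is a union of cycles of length $d/\gcd(d,a)$. Along a single $a$-orbit we merge cycles using pairs of parallel outer edges. For instance, $u_iu_{i+a}$ and $u_{i'}u_{i'+a}$, where $u_iu_{i'}$ is an edge of $Z_k$; then $u_{i+a}u_{i'+a}$ is automatically an edge of $Z_{k+a}$ by equivariance. So the pair of edges $u_iu_{i'}\in Z_k$ and $u_{i+a}u_{i'+a}\in Z_{k+a}$ is replaced by the two rungs, which is a valid 2-switch. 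But here $u_i$ and $u_{i'}$ are both outer vertices of the bipartite $Z$ and are therefore never adjacent. So we must use a path of length two, $u_iv_ju_{i'}$, and this forces a more delicate exchange: a ``ladder'' on the prism $Z_k\times\{k,k+a\}$. This is where I expect the main obstacle to lie.

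My fallback is to abandon $H(m/d;S/d)$ and instead apply the hypothesis to a larger connected cyclic Haar graph built from $G$. One option is the cyclic Haar graph $H(m;S\cup(S+a))$, in which $u_i$ is joined also to $v_{i+s+a}$. If this graph is connected and hamiltonian, I would try to realize each edge of type $s+a$ by a path $u_i u_{i+a} v_{i+a+s}$ inside $G$, absorbing the intermediate vertex. Failing that, I would handle the case $d\ge 2$ by merging cycles around each $a$-orbit. When $d/\gcd(d,a)$ is even, alternating orientations in the prism make the merge straightforward, as in Theorem~\ref{cor_main}. When it is odd, I would use a single $4$-cycle $u_i v_j v_{j+a} u_{i+a}$, or alternatively the Alspach-type patching of Theorem~\ref{th_bicirculant_s3}, to fix the parity.
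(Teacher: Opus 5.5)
Your opening reductions are correct and match the paper's first step. If $\gcd(m,S)=1$, then $H(m;S)$ is a connected spanning cyclic Haar graph of valence $|S|\ge 4$. Otherwise each component $C_k$ of $H(m;S)$ is a copy of $H(m/d;S/d)$ and is hamiltonian by hypothesis. But the whole content of the theorem is the gluing step, and there the proposal has a genuine gap.

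\textbf{Outer edges alone cannot glue the cycles.} Gluing the cycles $Z_k$ using outer edges of type $a$ only is not merely delicate; it is impossible.
\begin{itemize}
\item Since $a\not\equiv 0\pmod d$, these edges join distinct components.
\item So a hamilton cycle of $H(m;S)$ plus the type-$a$ edges would meet each $C_k$ in paths of $C_k$ whose two ends are both outer vertices.
\item Each such path has one more outer than inner vertex. This contradicts $|V_1\cap V(C_k)|=|V_2\cap V(C_k)|$.
\end{itemize}
The same count shows more generally: when no element of $R\cup T$ is $\equiv 0\pmod d$, a hamilton cycle of $G$ must leave every $C_k$ through as many inner edges as outer edges. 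So no ``ladder'' on the prism can rescue the approach. Inner edges of some type $b\in T$ are indispensable. The real difficulty is to use types $a$ and $b$ together when, as is typical, $b\not\equiv\pm a\pmod d$. When $\gcd(d,a)>1$, the $a$-orbits do not even reach all components.

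\textbf{The fallbacks do not supply this.}
\begin{itemize}
\item $H(m;S\cup(S+a))$ is not a subgraph of $G$. Replacing its edges by paths $u_iu_{i+a}v_{i+a+s}$ visits $u_{i+a}$ a second time.
\item The $4$-cycle $u_iv_jv_{j+a}u_{i+a}$ needs an inner edge of type $b\equiv\pm a\pmod d$. That is only the special case of Lemma~\ref{lem_ab_coprime3}. Its brick-product argument uses edges of both types and depends on the parity of $\gcd(m,S)$; it has nothing to do with Theorem~\ref{cor_main}.
\end{itemize}

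\textbf{What is missing} is the paper's case analysis on a pair $(a,b)\in R\times T$:
\begin{itemize}
\item If $a$ or $b$ is not coprime to $\gcd(m,S)$, use the uniform representation, i.e.\ Theorem~\ref{th_main}, or Theorem~\ref{th_m/2} when $m/2\in R,T$.
\item If both are coprime and $b\equiv\pm a$, use Lemma~\ref{lem_ab_coprime3}.
\item If $b\not\equiv\pm a$, use the $(\lambda,\mu,\rho)$-non-uniform representation of Lemma~\ref{lem_nonuniform_representation}. When $\rho>0$, Lemma~\ref{lem_extension_method1}(ii) turns it into a hamilton cycle.
\end{itemize}
The remaining type II case ($\rho=0$ for every pair) needs an induction on $|S|$:
\begin{itemize}
\item Delete a nonzero $c\in S$.
\item By Lemma~\ref{lem_rho=02}, the components of $B(m;a,S\smallsetminus\{c\},b)$ are again of type II, hence hamiltonian by induction. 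The base case $|S|=4$ rests on Theorem~\ref{th_bicirculant_s3} for three spokes.
\item Merge these components with the $2$-hooked construction of Lemma~\ref{lem_ab_coprime2}, using the hamilton paths from $u_0$ to $u_b$ supplied by Lemma~\ref{lem_extension_method1}(i).
\end{itemize}
This step applies the hypothesis also to the components of $H(m;S\smallsetminus\{c\})$, not only to those of $H(m;S)$. For $|S|=4$ these components are cubic and are covered by Theorem~\ref{th_cyclic_Haar_s3}.
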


As a consequence of Theorem \ref{th_bicirculant_s3}, the following results also hold. 

\begin{corollary}\label{cor_gcdmRT=1}
Every generalized Tabačjn graph of order $2m$, $m\ge 3$,  is hamiltonian. 
\end{corollary}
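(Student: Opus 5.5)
The plan is to identify generalized Tabačjn graphs as bicirculants and then read off the result from Theorem \ref{th_bicirculant_s3}. Recall that a pentavalent generalized Tabačjn graph is obtained from a generalized rose window graph by adding one more set of spokes, and a generalized rose window graph is obtained from an $I$-graph $I(m;j,k)=B(m;\{j,-j\},\{0\},\{k,-k\})$ by adding one set of spokes. So a generalized Tabačjn graph on $2m$ vertices has the form $B(m;\{\pm j\},\{0,a,b\},\{\pm k\})$, with $a,b$ distinct nonzero elements of $\ZZ_m$ and $0<j,k<m/2$. Every vertex is therefore incident to exactly $s=3$ spokes, and the graph is a bicirculant in the sense of the paper. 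I will state this identification first, noting that the hypothesis $m\ge 3$ is what makes the outer and inner edge types $\pm j$, $\pm k$ genuine. The degenerate case $j=m/2$ (or $k=m/2$), where $\{j,-j\}$ is a singleton, needs no separate treatment, since Theorem \ref{th_bicirculant_s3} does not depend on $|R|$ or $|T|$.

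The next step is to check the hypotheses of Theorem \ref{th_bicirculant_s3}. The first is connectedness. I expect the standard definition of generalized Tabačjn graphs either to assume it or to make it automatic. If it is not automatic, I will restrict the statement to connected graphs, which is the implicit intended meaning, because a disconnected graph is never hamiltonian. Whether connectedness must be imposed explicitly is the one point I would double-check against the source definition in \cite{Taba1,Taba2}.

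The remaining step is to exclude the exceptions in Theorem \ref{th_bicirculant_s3}. The graph $K_2$ has two vertices, but a generalized Tabačjn graph has $2m\ge 6$ vertices. The Alspach generalized Petersen graphs are cubic, whereas a generalized Tabačjn graph is pentavalent, since each vertex has two same-orbit neighbours and three spoke neighbours. Hence Theorem \ref{th_bicirculant_s3} applies directly and gives hamiltonicity, and no case analysis is needed.
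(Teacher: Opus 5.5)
Your proposal is correct and matches the paper, which states this corollary without separate proof as a direct consequence of Theorem~\ref{th_bicirculant_s3}: a generalized Tabačjn graph is a bicirculant $B(m;\{\pm a\},\{0,c_1,c_2\},\{\pm b\})$ with three spokes at each vertex. It has at least $6$ vertices and is not cubic, so neither $K_2$ nor an Alspach generalized Petersen graph can occur. Your caveat that connectedness has to come from the definition (or be assumed) is well taken, because the paper's statement also takes it for granted.
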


The last result, Theorem \ref{thm:primes_3rd_paper},  improves the bounds from \cite[Theorem 1.2]{BoPiZi2025_2ndpaper}, where we have shown that every bicirculant of order $2m$ is hamiltonian for  even  $m<210$ and odd $m< 1\,155$, with the exception of the complete graph $K_2$ and  the generalized Petersen graphs $G(m, 2)$ with $m\equiv 5\pmod 6$. 

\begin{theorem}\label{thm:primes_3rd_paper}
Let $G=B(m;R,S,T)$ be a connected bicirculant  with  $|S| \ge 4$. The following  hold:
\begin{description}
\item[(i)]  If $m$ is even and a product of at most four prime powers, then $G$ is hamiltonian.

\item[(ii)]  If $m=2^{\ell}\,\Pi^4_{i=1}p_i$, where $\ell\in\{1, 2\}$ and each $p_i$ is an odd prime, then $G$ is hamiltonian.

\item[(iii)]  If  $m$ is odd and a product of at most three prime powers, then $G$ is hamiltonian.

\item[(iv)]  If $m$ is odd and a product of at most four prime powers and $\gcd(m,S)>1$, 
then $G$ is hamiltonian. 

\item[(v)]  If $m$ is odd, $m=\Pi^4_{i=1}p_i$, where each $p_i$ is an odd prime, then $G$ is hamiltonian. 
\end{description}

Consequently, every connected bicirculant graph on $2m$ vertices whose vertices are  incident  
to at least $4$ spokes is hamiltonian for even   $m< 9\, 240$, 
and for odd $m< 3\,465$ or $m< 15\, 015$ according to whether the spokes induce a connected subgraph or not, respectively.
\end{theorem}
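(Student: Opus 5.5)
The plan is to reduce every case to Theorem \ref{cor_main} and Theorem \ref{th_cyclich_Haar_then_bicirculant_ham}, or rather to the machinery behind them. Concretely, I would use a reduction lemma that the paper presumably proves en route to Theorem \ref{th_cyclich_Haar_then_bicirculant_ham}: if $G=B(m;R,S,T)$ is connected with $|S|\ge 4$, then $G$ is hamiltonian as soon as some connected cyclic Haar graph $H(m';S')$ with $|S'|\ge 4$ is hamiltonian. Here $m'=m/d$ for a divisor $d$ related to $\gcd(m,S)$, and $S'$ is a suitable reduction of $S$. The remaining input is known hamiltonicity results for cyclic Haar graphs (equivalently, Cayley graphs of dihedral groups $D_{m'}$ with connection set $S$ times a reflection) when $m'$ has few prime factors. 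For instance, the Alspach--Zhang type result that cubic Cayley graphs on dihedral groups are hamiltonian, and the results of Witte Morris and coauthors that connected Cayley graphs on groups of order $2p^k$, $4p$, $2pq$ and similar are hamiltonian. I would also use the bounds already established in \cite[Theorem 1.2]{BoPiZi2025_2ndpaper}.

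\textbf{Step 1 (even $m$).} If $m/\gcd(m,S)$ is even, Theorem \ref{cor_main} finishes. Otherwise $d=\gcd(m,S)$ contains the full power of $2$ in $m$. I would then pass to the quotient structure: $H(m;S)$ splits into $d$ copies of the connected Haar graph $H(m/d;S/d)$, linked by the circulants on $R$ and $T$. In case (i), $m/d$ is odd and has at most three prime-power factors. In case (ii), $m/d$ is a product of at most four odd primes. So a hamilton cycle in the Haar component, which is a Cayley graph on a dihedral group of order $2m/d$, would be available by the odd cases below. It then has to be lifted to $G$ using the lifting lemma from the proof of Theorem \ref{th_cyclich_Haar_then_bicirculant_ham}.

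\textbf{Step 2 (odd $m$).} For (iii), $H(m;S)$ (or its connected component) is a Cayley graph on a dihedral group of order $2n$, where $n$ is a product of at most three prime powers. I would invoke the known hamiltonicity of connected Cayley graphs of valence $\ge 4$ on such dihedral groups, via the Marušič / Durnberger-style technique: the commutator subgroup of $D_n$ is cyclic, so Cayley graphs are hamiltonian when certain quotient conditions hold. I would organize this as an induction on the number of prime factors, using the factor group lemma. For (iv), $\gcd(m,S)>1$ reduces $m$ to $m/d$ with at most three prime-power factors, which lands in case (iii). Case (v), squarefree with four primes, needs a direct factor group lemma argument: choose a prime $p\mid m$ and a generator $s\in S$ with $p\nmid s$, find a hamilton cycle in the quotient on $m/p$ whose voltage generates $\ZZ_p$, and lift it.

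\textbf{Step 3 (numerical consequences).} The smallest even $m$ with at least five distinct prime factors, other than those of the form covered in (ii), is $2^3\cdot3\cdot5\cdot7\cdot11=9240$. The smallest odd $m$ with four distinct primes is $3\cdot5\cdot7\cdot11=1155$, but (v) covers squarefree such $m$. The next one, $3^2\cdot5\cdot7\cdot11=3465$, is the bound in the connected case. When the spokes are disconnected, (iv) applies, so the first failure needs five odd primes: $3\cdot5\cdot7\cdot11\cdot13=15015$. I would verify these minimality claims by enumerating factorization patterns.

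The main obstacle is case (v), and likewise the four-odd-prime part of (ii). There, no ready-made theorem on dihedral Cayley graphs applies, and the lifting via the factor group lemma requires choosing the quotient cycle so that its voltage sum is coprime to $p$. That requires flexibility: I would try modifying the quotient hamilton cycle by a local switch along a 4-cycle $s_1s_2^{-1}s_3s_4^{-1}$, which changes the voltage by a controlled amount. Since $|S|\ge 4$, I expect such a switch to be available.
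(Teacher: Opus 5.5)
Your proposal leaves every hard case delegated to a result that is either unavailable or not carried out.

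(a) The ``reduction lemma'' you attribute to the proof of Theorem~\ref{th_cyclich_Haar_then_bicirculant_ham} is not established in the paper. For type~II graphs with $|S|\ge 5$ that proof needs hamiltonicity of the components of $H(m;S')$ along a whole chain of subsets $S'\subset S$, and of the intermediate graphs $B(m;a,S',b)$. One Haar graph $H(m';S')$ does not suffice.

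(b) Step~2 rests on a ``known'' hamiltonicity of all connected Cayley graphs on the dihedral group of order $2n$, with $n$ a product of up to three prime powers. The results you allude to (Maru\v{s}i\v{c}, Durnberger, Keating--Witte, Morris et al.) either require the commutator subgroup (here the whole rotation group $\ZZ_n$) to have prime or prime-power order, or treat only special orders. Your proposed induction via the factor group lemma runs into exactly the voltage condition you leave open.

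(c) For (v) and the four-odd-prime part of (ii), ``I expect such a switch to be available'' is not an argument. A $4$-cycle switch may split the cycle, and its voltage change $s_1-s_2+s_3-s_4$ can be $0$ modulo $p$ for every available switch. Your Step~1 for (ii) also pushes the problem onto Haar components of odd order $p_1p_2p_3p_4$, which is the hardest case.

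The missing idea is the elementary Lemma~\ref{lemma:primes}. Suppose no two or three elements of $S$ (translated to contain $0$ by Lemma~\ref{lemma:iso}) give a connected spanning subgraph $B(m;R,S',T)$; if one did, it would be hamiltonian by Theorem~\ref{th_bicirculant_s3}. Then a pigeonhole argument on the primes witnessing $\gcd(m,R,T,c_i,c_j)>1$ and $\gcd(m,R,T,c_i-c_k,c_j-c_k)>1$ forces $\gcd(m,R,T)$ to have at least four distinct prime divisors. The rest is bookkeeping:
\begin{itemize}
\item (iii) is immediate, since $\gcd(m,R,T)$ divides $m$.
\item In (i) (after $\gcd(m,S)=1$ is handled by Theorem~\ref{th_generalized_dihedral}) and in (iv), $\gcd(m,S)>1$ together with connectivity removes a prime from $\gcd(m,R,T)$.
\item In (ii), first dispose of $\gcd(m,S)=1$ and $m/2\in R\cup T$ by Theorems~\ref{th_generalized_dihedral} and~\ref{th_m/2}. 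If $\gcd(m,R,T)$ still has four prime factors and is even, then $\gcd(m,S)$ is odd and Theorem~\ref{cor_main} applies. If it is odd, this forces $\ell=2$ and $R=T=\{\pm m/4\}$, so Theorem~\ref{th_generalized_dihedral} applies.
\item In (v), a nonzero element of $R\cup T$ makes $\gcd(m,R,T)$ a proper divisor of the squarefree $m$.
\end{itemize}
So the paper never needs hamiltonicity of an odd-order Haar graph with four prime factors.

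Your unease about (v) does point at something real: the paper's one-line argument tacitly assumes $R\cup T\neq\emptyset$. When $R=T=\emptyset$ one has $\gcd(m,R,T)=m$. For instance, $S=\{0,540,441,595\}\subseteq\ZZ_{1155}$ gives a connected $H(1155;S)$ in which no $3$-element subset of $S$ spans a connected subgraph, so Lemma~\ref{lemma:primes} does not reach it. That case genuinely needs a new idea, and your sketch does not provide it.
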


\section{Background}\label{sec_preliminary}

In this section, we  report the results on the existence of hamilton cycles in bicirculant graphs that are known from the literature and will be used later in the paper.  We also review basic properties of bicirculants that will be needed.

\smallskip

Alspach and Zhang proved in \cite{AlZh1989} that  all connected Cayley graphs of valence 3 on dihedral groups are hamiltonian.  
In the subsequent paper \cite{AlChDe2010} by Alspach, Chen and Dean, a stronger property is given for Cayley graphs of valence at least three on generalized dihedral groups.
We note that  bicirculants $B(m;R,S,T)$ with $R=T$ are Cayley graphs on dihedral groups. In particular, cyclic Haar graphs which are bicirculants of form $B(m; \emptyset, S,\emptyset)$ are also Cayley graphs on dihedral groups.
The following results thus follow directly from  \cite[Theorem 3.1]{AlZh1989}, \cite[Theorem 1.8]{AlChDe2010} and \cite[Lemma 2.9] {AlChDe2010}. 

\begin{theorem}[\protect{\cite{AlZh1989}}] \label{th_cyclic_Haar_s3}
Every connected cubic cyclic Haar graph   is  hamiltonian.
\end{theorem}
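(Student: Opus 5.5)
The statement is the case $|S|=3$ of the Alspach--Zhang theorem, so I will reduce it to that result rather than build a hamilton cycle directly. A cyclic Haar graph $H(m;S)=B(m;\emptyset,S,\emptyset)$ with $|S|=3$ is the Cayley graph of the dihedral group $D_m=\langle \rho,\tau \mid \rho^m=\tau^2=1,\ \tau\rho\tau=\rho^{-1}\rangle$ with respect to a generating set of three involutions. It therefore falls under \cite[Theorem 3.1]{AlZh1989}, which says that every connected cubic Cayley graph on a dihedral group is hamiltonian.

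I carry out the identification as follows. Identify $u_i$ with the rotation $\rho^i$ and $v_j$ with the reflection $\rho^{j}\tau$. Writing $S=\{0,s_1,s_2\}$, take the connection set $C=\{\tau,\rho^{s_1}\tau,\rho^{s_2}\tau\}$. Every element of $C$ is a reflection, hence an involution, so $C=C^{-1}$ and the Cayley graph $\mathrm{Cay}(D_m,C)$ is simple and undirected. Right multiplication gives $\rho^i\cdot\rho^{c}\tau=\rho^{i+c}\tau$, so $u_i$ is adjacent exactly to the vertices $v_{i+c}$ with $c\in S$. This is precisely the spoke set of $H(m;S)$.

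The graph is cubic because $|S|=3$ and the three elements of $C$ are distinct. Connectedness of $H(m;S)$ is equivalent to $C$ generating $D_m$. If $C$ generates $D_m$, the Cayley graph is connected. Conversely, if the graph is connected, the component of the identity is the subgroup $\langle C\rangle$, so $\langle C\rangle=D_m$. Concretely, this is the condition $\gcd(m,s_1,s_2)=1$, since $\rho^{s_k}=(\rho^{s_k}\tau)\tau$. The degenerate small cases cause no trouble: for $m\le 2$ the three spoke types cannot be distinct, so $m\ge 3$ automatically holds.

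With the identification in place, \cite[Theorem 3.1]{AlZh1989} applies and gives a hamilton cycle. The only step that needs care is checking that the Cayley-graph convention of \cite{AlZh1989} (right versus left multiplication) matches the one used here. Either choice yields a graph isomorphic to $H(m;S)$: the map $g\mapsto g^{-1}$ sends one convention to the other and is an isomorphism. So no real obstacle remains.
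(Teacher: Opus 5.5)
Your proposal is correct and follows the same route as the paper. The paper also obtains this statement directly from \cite[Theorem 3.1]{AlZh1989}, noting that cyclic Haar graphs are Cayley graphs on dihedral groups. You have simply made that identification explicit: $u_i\mapsto\rho^i$, $v_j\mapsto\rho^j\tau$, with connection set $\{\tau,\rho^{s_1}\tau,\rho^{s_2}\tau\}$ of three reflections, and connectedness is equivalent to $\gcd(m,s_1,s_2)=1$.
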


\begin{theorem}[\protect{\cite{AlChDe2010}}]\label{th_generalized_dihedral}
Let $G=B(m;R,S,R)$ be a connected bicirculant graph with $m$ even and valence at least $3$. Then $G$ is hamilton-connected, unless it is bipartite, in which case it is hamilton-laceable. In particular, $G$ is hamiltonian.
\end{theorem}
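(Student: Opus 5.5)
The plan is to show that $B(m;R,S,R)$ is a Cayley graph on the dihedral group of order $2m$, and then apply the Alspach--Chen--Dean theorem on Cayley graphs of generalized dihedral groups. The only real work is setting up the right identification and checking that the hypotheses of the cited results match ours.

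First I would write $D_m=\langle \rho,\tau \mid \rho^m=\tau^2=1,\ \tau\rho\tau=\rho^{-1}\rangle$ and define the connection set
$$C=\{\rho^r : r\in R\}\cup\{\rho^c\tau : c\in S\}.$$
The set $C$ is inverse-closed:
\begin{itemize}
\item $(\rho^r)^{-1}=\rho^{-r}$ and $R=-R$;
\item every reflection $\rho^c\tau$ is an involution.
\end{itemize}
Also $1\notin C$, because $0\notin R$. In the right Cayley graph $\mathrm{Cay}(D_m,C)$ we have $x\sim xg$ for $g\in C$. I would use the bijection $u_i\mapsto\rho^i$, $v_i\mapsto\rho^i\tau$. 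Then:
\begin{itemize}
\item $\rho^i\rho^r=\rho^{i+r}$ gives the outer edges of type $r$;
\item $\rho^i\tau\rho^r=\rho^{i-r}\tau$ gives the inner edges of type $-r$, so the inner generating set is $-R=R$;
\item $\rho^i\rho^c\tau=\rho^{i+c}\tau$ gives exactly the spokes of type $c$.
\end{itemize}
Hence $B(m;R,S,R)\cong \mathrm{Cay}(D_m,C)$. Connectedness of $G$ is equivalent to $\langle C\rangle=D_m$, and the valence of $G$ equals $|C|$.

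Next I would invoke \cite[Theorem 1.8]{AlChDe2010}, supplemented by \cite[Lemma 2.9]{AlChDe2010} where needed for the bipartite case. That theorem states that a connected Cayley graph of valence at least $3$ on a generalized dihedral group whose order is divisible by $4$ is hamilton-connected if it is non-bipartite, and hamilton-laceable if it is bipartite. Our group $D_m$ is dihedral, hence generalized dihedral, and has order $2m$. Since $m$ is even, $4$ divides $2m$, so the hypotheses hold. The main point to verify carefully is precisely this divisibility hypothesis: it is the place where the assumption that $m$ is even is used, and the statement is false in general without it (e.g.\ the Petersen-like exceptions occur only for odd $m$).

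Finally, hamiltonicity follows in both cases. If $G$ is hamilton-connected, take any edge $xy$ and a hamilton path from $x$ to $y$; together with the edge $xy$ this closes a hamilton cycle. If $G$ is bipartite and hamilton-laceable, an edge $xy$ joins the two colour classes, so there is a hamilton $x$--$y$ path, which again closes with $xy$. Since the valence is at least $3$, $G$ has at least $4$ vertices, so these cycles are genuine hamilton cycles.
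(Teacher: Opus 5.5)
Your proposal is correct and follows the paper's argument: the paper notes that $B(m;R,S,R)$ is a Cayley graph on the dihedral group of order $2m$ and invokes \cite[Theorem 1.8]{AlChDe2010}, and your identification $u_i\mapsto\rho^i$, $v_i\mapsto\rho^i\tau$ correctly supplies the details (Lemma 2.9 of \cite{AlChDe2010} is not needed, since Theorem 1.8 already covers the bipartite case). You should drop the aside claiming the statement is false for odd $m$: the Alspach generalized Petersen graphs are not Cayley graphs, hence not of the form $B(m;R,S,R)$, so they are not counterexamples, and in your proof evenness of $m$ enters only through the cited theorem's hypothesis that the group order is divisible by $4$.
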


\begin{lemma}[\protect{\cite{AlChDe2010}}]\label{lem_ham_path_cyclic_Haar}
Every connected  cyclic Haar graph has a hamilton path.
\end{lemma}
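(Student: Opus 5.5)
The plan is to decompose $H(m;S)$ into disjoint cycles using two spoke types, and then string these cycles together along a directed hamilton path in a quotient Cayley digraph on a cyclic group.

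First the trivial cases. Since $0\in S$, the graph $H(m;S)$ is connected exactly when $S$ generates $\ZZ_m$. If $|S|=1$, then $S=\{0\}$, connectivity forces $m=1$, and the graph is $K_2$, which is a path. Otherwise fix some $a\in S\setminus\{0\}$ and put $d=\gcd(m,a)$.

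\textbf{Step 1: the cycle decomposition.} Consider the spanning subgraph $H(m;\{0,a\})$. It is $2$-regular, and its components are the cycles
$$C_j:\ u_j,\,v_{j+a},\,u_{j+a},\,v_{j+2a},\,\dots$$
for $j\in\ZZ_d$. Each $C_j$ has vertex set $\{u_i, v_i : i\equiv j \pmod d\}$ and length $2m/d$. If $d=1$, the graph has a hamilton cycle and hence a hamilton path, so assume $d>1$.

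\textbf{Step 2: the quotient digraph.} A spoke of type $b\in S$ joins $u_i\in C_j$ to $v_{i+b}\in C_{j+b}$. Read in the direction "from the outer end to the inner end", every such spoke goes from $C_j$ to $C_{j+\bar b}$, where $\bar b=b \bmod d$. Let $\bar S$ be the image of $S$ in $\ZZ_d$. Since $S$ generates $\ZZ_m$, the set $\bar S$ generates $\ZZ_d$. Hence the Cayley digraph $\vec\Gamma=\mathrm{Cay}(\ZZ_d,\bar S\setminus\{0\})$ is strongly connected.

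\textbf{Step 3: invoke the abelian Cayley digraph theorem.} By the classical theorem of Holsztyński--Strube (see also Rankin), every connected Cayley digraph on a finite abelian group has a directed hamilton path. So $\vec\Gamma$ has a directed hamilton path $j_1\to j_2\to\dots\to j_d$, where each step is $j_{k+1}=j_k+\bar b_k$ for some $b_k\in S$.

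\textbf{Step 4: lifting the path.} Each $C_{j}$ is an even cycle whose colours alternate between outer and inner vertices. So for any entry vertex $y\in C_j$ we can traverse all of $C_j$ as a path starting at $y$ and ending at a cycle-neighbour of $y$, which has the opposite type.
\begin{itemize}
\item Start in $C_{j_1}$ at an arbitrary inner vertex and traverse $C_{j_1}$, finishing at an outer vertex $u_i$.
\item Use the spoke of type $b_k$ to move to $v_{i+b_k}$. This vertex lies in $C_{j_{k+1}}$, and it is again an inner vertex.
\item Traverse $C_{j_{k+1}}$ from $v_{i+b_k}$, ending at an outer vertex, and repeat.
\end{itemize}
The parity is consistent throughout: every cycle is entered at an inner vertex and left from an outer vertex. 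Therefore only forward steps $+\bar b$ are ever required, which is precisely what a directed path in $\vec\Gamma$ provides. Concatenating the traversals gives a path through all $d$ cycles, that is, a hamilton path of $H(m;S)$.

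\textbf{Main obstacle.} The key point is that the parity constraint forces all moves between cycles to be forward steps. This is why an undirected hamilton path in the quotient circulant is not enough, and a directed hamilton path in the Cayley digraph is needed. If I wanted to avoid citing Holsztyński--Strube, I would try induction on the number of generators, using the standard Rankin-type argument in which successive cosets of $\langle \bar b\rangle$ are traversed and linked by one further generator.
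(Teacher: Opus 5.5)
Your proof is correct, and it differs from the paper mainly in that the paper gives no argument of its own. The paper obtains the lemma from \cite[Lemma 2.9]{AlChDe2010} by viewing $H(m;S)$ as a Cayley graph on the dihedral group of order $2m$ whose connection set consists of reflections. That buys brevity but imports a result about generalized dihedral groups; your argument is self-contained apart from the Holszty\'{n}ski--Strube theorem.

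Your steps all check out:
\begin{itemize}
\item the components of $H(m;\{0,a\})$ are cycles of length $2m/d\ge 4$, because $a\ne 0$;
\item $\bar S$ generates $\ZZ_d$, because $S$ generates $\ZZ_m$;
\item the parity observation correctly shows that only outer-to-inner spokes, i.e.\ forward steps in $\ZZ_d$, are ever needed.
\end{itemize}

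Note, however, that Step 1 can be dropped. The perfect matching of type-$0$ spokes plays exactly the role of your $2$-factor: each ``cycle'' is the edge $v_iu_i$, entered at $v_i$ and left at $u_i$. A directed hamilton path $x_1\to x_2\to\cdots\to x_m$ in $\mathrm{Cay}(\ZZ_m,S\setminus\{0\})$ then lifts directly to the hamilton path $v_{x_1}u_{x_1}v_{x_2}u_{x_2}\cdots v_{x_m}u_{x_m}$. This works because $u_{x_k}v_{x_{k+1}}$ is a spoke of type $x_{k+1}-x_k\in S$. The shortcut also absorbs your special cases $|S|=1$ and $d=1$. In dihedral language it is the standard device of alternating one fixed reflection with the others.

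What your $2$-factor version gains is only that the directed path is needed in the smaller group $\ZZ_d$. It also gives a hamilton cycle for free when some $a\in S$ is coprime to $m$.
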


In the recent paper \cite{BoPiZi2025_2ndpaper} we proved the existence of a hamilton cycle for certain larger families of bicirculant graphs, which are specified in the following three theorems.

\begin{theorem}[\protect{\cite[Theorem 1.1]{BoPiZi2025_2ndpaper}}]\label{th_bicirculant_s2}
Every connected bicirculant graph whose vertices are incident to
at most two spokes is hamiltonian, except for the complete graph
$K_2$ and the Alspach generalized Petersen graphs.
\end{theorem}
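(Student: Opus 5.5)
Since this result is quoted from \cite{BoPiZi2025_2ndpaper}, here is how I would reprove it. I would split by $s=|S|\in\{1,2\}$. Regularity forces $|R|=|T|$. By the automorphism $u_i\mapsto u_i$, $v_i\mapsto v_{i-c}$, any spoke type $c\in S$ may be shifted to $0$, so I assume $0\in S$ and, when $s=2$, $S=\{0,c\}$.

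\emph{Case $s=1$.} Here $S=\{0\}$ and the spokes form the perfect matching $u_iv_i$.
\begin{itemize}
\item If $R=T=\emptyset$, connectivity forces $m=1$, which gives the exception $K_2$.
\item If $|R|=|T|=1$, the graph is an $I$-graph $I(m;j,k)$ or a generalized Petersen graph. Alspach's theorem \cite{Al11983} together with its extension to $I$-graphs \cite{BoPi2017} gives hamiltonicity except for $G(m,2)$ with $m\equiv 5\pmod 6$.
\item If $|R|=|T|\ge 2$, I would extract a connected spanning $I$-graph or generalized Petersen subgraph. Choose $a\in R$ and $b\in T$ so that $\langle a,b\rangle=\ZZ_m$; connectivity of $G$ gives $\langle R\cup T\rangle=\ZZ_m$, though not necessarily via a single pair. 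If that subgraph is an Alspach graph, swap in another pair $(a',b')$ (possible since $|R|\ge 2$). Alternatively, build the cycle directly: when $\langle a\rangle$ and $\langle b\rangle$ have small index, walk along the outer $a$-cycles and inner $b$-cycles and splice them together with rungs $u_iv_i$, as in the classical zig-zag constructions for $I$-graphs.
\end{itemize}

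\emph{Case $s=2$.} The spokes $u_iv_i$, $u_iv_{i+c}$ form a $2$-factor of $d=\gcd(m,c)$ disjoint even cycles $C_0,\dots,C_{d-1}$, where $C_r$ contains the vertices with index $\equiv r \pmod d$.
\begin{itemize}
\item If $R=T=\emptyset$, connectivity forces $d=1$, and the single spoke cycle is hamiltonian.
\item Otherwise I would merge the $C_r$ into one cycle by exchanges along $4$-cycles. For an outer edge $u_iu_{i+a}$ with $a\not\equiv 0\pmod d$, pair it with a parallel edge $v_{i+c}v_{i+c+a}$ if $a\in T$, or replace two spokes by a path through outer edges. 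This is the usual ``$2$-factor plus switching'' argument, which reduces to showing that the quotient graph on $\ZZ_d$ generated by $R\cup T$ (connected by assumption) has a spanning tree whose edges can be realized by disjoint switches.
\item When $R=T$ and $m$ is even, Theorem~\ref{th_generalized_dihedral} already gives the result. When $R\ne T$, I would use the freedom to choose which copy $i$ of each type is used, so that different switches stay disjoint.
\end{itemize}

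The main obstacle is the $s=1$ case with $|R|\ge 2$ and no single pair $(a,b)$ generating a connected $I$-subgraph. I would first try reducing to the cubic case by discarding types. If that fails, I would treat the graph as a cyclic cover and lift a hamilton cycle from the quotient $B(m/d;\,R,\{0\},T)$ using a voltage argument. The idea is to adjust one detour so that the net voltage of the lifted closed walk generates $\ZZ_d$, giving a single hamilton cycle in the cover.
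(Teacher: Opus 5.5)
The proposal has genuine gaps in the two cases that carry the real content of the theorem: $s=1$ with valence at least $4$, and $s=2$ with $R\neq T$.

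\textbf{Case $s=1$, valence at least $4$.} First a bookkeeping slip: since $R=-R$, the case $|R|=|T|=1$ forces $R=T=\{m/2\}$, which is connected only for $m=2$. The $I$-graphs are the case $|R|=|T|=2$, and you handle those correctly via Alspach and Bonvicini--Pisanski.

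Your plan needs a pair $(a,b)\in R\times T$ whose $I$-subgraph is connected and non-Alspach. Both requirements can fail.
\begin{itemize}
\item A connected pair need not exist. $B(60;\{\pm6,\pm10\},\{0\},\{\pm12,\pm15\})$ is connected, yet $\gcd(60,a,b)\in\{2,3,5,6\}$ for all four pairs.
\item Even when pairs are connected, you cannot always ``swap in another pair'' to avoid an Alspach graph. In $B(17;\{\pm1,\pm4\},\{0\},\{\pm2,\pm8\})$ all four pairs give connected $I$-graphs, and every one of them is isomorphic to $G(17,2)$, which is not hamiltonian. To see this, multiply by $4^{-1}=13$: this sends $(4,8)\mapsto(1,2)$ and $(4,2)\mapsto(1,9)=(1,-8)$. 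Then use $G(17,8)\cong G(17,2)$, since $8\cdot 2\equiv -1\pmod{17}$.
\end{itemize}
Your voltage fallback does not rescue the second example. It was designed for $\gcd(m,a,b)>1$, and since $17$ is prime the only proper cyclic quotient is the two-vertex pregraph. Over that quotient, ``choose a closed walk whose lift is a single cycle'' is just the original problem restated. Moreover $R\cap T=\emptyset$ there, so no $4$-cycle of the graph contains a spoke. The usual $4$-cycle exchange between an outer and an inner cycle is therefore unavailable. A genuinely new splicing construction is needed, and the proposal does not supply one.

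\textbf{Case $s=2$.} The switching argument breaks for a similar reason. Every spoke lies in the spoke $2$-factor, so a $4$-cycle meeting two different spoke cycles in one edge each must be $u_xv_yv_{y+\beta}u_{x+\alpha}$. Here $\alpha\in R$, $\beta\in T$, $y-x\in\{0,c\}$ and $(y+\beta)-(x+\alpha)\in\{0,c\}$. Such a switch therefore exists only if $\beta-\alpha\in\{0,\pm c\}$ for some $\alpha\in R$, $\beta\in T$ with $\alpha\not\equiv 0\pmod d$.

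In the generalized rose window graph $B(20;\{\pm1\},\{0,4\},\{\pm7\})$ (connected, $d=4$), no choice of signs satisfies this. So there is no $4$-cycle exchange at all between its four spoke cycles. Your alternative, ``replace two spokes by a path through outer edges'', is left unspecified, yet this is exactly where the difficulty lies.

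Note that the paper does not reprove this theorem; it imports it from \cite{BoPiZi2025_2ndpaper}. The subfamily $|S|=2$, $|R|=|T|=2$ alone (the generalized rose window graphs) required the separate paper \cite{BoPiZi2025}. The $2$-hooked construction of Proposition~\ref{rem_2hooked2} was introduced there for precisely this kind of splicing.

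A lesser point: the disjointness of switches along a spanning tree of the quotient on $\ZZ_d$ is asserted rather than proved. That part is routine when switches are plentiful.

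As written, your argument covers $K_2$, the $I$-graphs, the case $R=T=\emptyset$, and the $s=2$ graphs admitting suitable $4$-cycle switches, but not the statement in general.
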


\begin{theorem}[\protect{\cite[Theorem 1.3]{BoPiZi2025_2ndpaper}}]\label{th_m/2}
Let  $G=B(m; R,  S, T)$ be a connected bicirculant graph with $m$ even, $m\ge 4$, $|S|\ge 3$ and $m/2\in R, T$.
Then $G$ is hamiltonian.
\end{theorem}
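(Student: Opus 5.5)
The plan is to reduce to the dihedral Cayley graph result, Theorem~\ref{th_generalized_dihedral}. Let $H=B(m;\{m/2\},S,\{m/2\})$ be the spanning subgraph of $G$ formed by all spokes together with the two perfect matchings of outer and inner edges of type $m/2$. Since $R=T=\{m/2\}$, $H$ is a Cayley graph on a dihedral group. Every vertex of $H$ has valence $|S|+1\ge 4$.

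\textbf{Step 1: the case where $H$ is connected.} Let $K=\langle S-S,\,m/2\rangle\le \ZZ_m$. The map $u_i\mapsto u_{i+1}$ together with the form of the edges shows that $H$ is connected exactly when $K=\ZZ_m$. In that case $m$ is even and the valence is at least $3$, so Theorem~\ref{th_generalized_dihedral} applies directly and $G\supseteq H$ is hamiltonian.

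\textbf{Step 2: the components of $H$ when $K\ne\ZZ_m$.} Let $K$ have index $k\ge 2$ and order $m'=m/k$. Then $m/2\in K$ forces $m'$ to be even.

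Fix $c_0\in S$. The component of $H$ containing $u_a$ has vertex set $\{u_{a+x},v_{a+c_0+x}: x\in K\}$. Relabelling $K\cong \ZZ_{m'}$, this component is isomorphic to $B(m';\{m'/2\},S',\{m'/2\})$, where $S'=S-c_0$ viewed in $K$. It is connected, has even order parameter $m'$, and has valence at least $4$.

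By Theorem~\ref{th_generalized_dihedral}, each of the $k$ components is therefore hamilton-connected, or hamilton-laceable if it is bipartite.

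\textbf{Step 3: linking the components.} Since $G$ is connected, the edge types $R\cup T$ together with $K$ generate $\ZZ_m$. Hence the quotient graph on the cosets $\ZZ_m/K\cong\ZZ_k$, with connection set the images of $R\cup T$, is a connected circulant. Connected circulants are hamiltonian, so this quotient has a hamilton cycle $C_0C_1\cdots C_{k-1}C_0$.

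Each step $C_j\to C_{j+1}$ is realised by an outer edge $u_xu_{x+r}$ or an inner edge $v_yv_{y+t}$ of $G$ joining the two components. Each such lift can be translated freely by elements of $K$, so we have many choices for the actual edge.

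In each component we choose an entry vertex $x_j$ and an exit vertex $y_j$, with $x_j\ne y_j$. We then join them by a hamilton path of the component, which exists by hamilton-connectedness. Concatenating these paths with the linking edges produces a hamilton cycle of $G$.

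\textbf{Main obstacle: parity in the bipartite case.} When a component is bipartite, we only have hamilton-laceability, so $x_j$ and $y_j$ must lie in opposite colour classes. The freedom to choose the linking edges is what should resolve this. We may use an outer edge or an inner edge where both are available, and we may translate the linking edge by any element of $K$. In particular, translating by $m/2$, or by a difference $s-s'$ of spoke types, moves an endpoint to the other colour class. With this freedom I would first try to fix the colour of each $x_j$ and then choose $y_j$ of the opposite colour, while keeping $x_j\ne y_j$.

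The delicate case is $k=2$. There the two linking edges between $C_0$ and $C_1$ must be distinct and must not share endpoints. Here I would use two translates of the same edge type, differing by an element of $K$ chosen to give the right parities.

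If some parity constraint cannot be met, the fallback is to replace the path in one component by a hamilton cycle of that component. We then splice it into the cycle through a $4$-cycle formed by two parallel linking edges and two edges of type $m/2$, which is a standard cycle-merging move.
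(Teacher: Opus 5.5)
The paper does not prove this statement; it quotes it from the companion paper. So your argument has to stand on its own. Steps 1 and 2 are fine. Step 3 works when the components of $H$ are hamilton-connected, and also for $k=2$ if you take two translates of one linking edge differing by $m/2$. The gap is the bipartite case with $k\ge 3$. There your proposed remedy provably cannot work.

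\emph{Why translations cannot fix the parity.} Put $d=\gcd(m,S,m/2)$, so $K=d\ZZ_m$. The components of $H$ are bipartite exactly when $m/d\equiv 2\pmod 4$ and $\gcd(m,S)=2d$. In that case a proper colouring is $\chi(u_x)=\lfloor x/d\rfloor$ and $\chi(v_x)=\lfloor x/d\rfloor+1 \pmod 2$, for $0\le x<m$.
\begin{itemize}
\item Translating a linking edge by $y\in K$ shifts the colours of \emph{both} its endpoints by $y/d$. So whether a linking edge preserves $\chi$ is invariant under translation.
\item A translation by $s-s'$ flips nothing at all, since $s-s'$ is an even multiple of $d$ here.
\item Every edge of $H$ flips $\chi$, and a hamilton cycle has even length $2m$. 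Hence any hamilton cycle contains an even number of $\chi$-preserving edges.
\end{itemize}
The only non-$H$ edges of your cycle are the $k$ linking edges. So your construction succeeds only if an even number of them preserve $\chi$. That condition is fixed by the quotient cycle and the edge types, and no choice of translates changes it.

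\emph{A counterexample to Step 3.} Take $B(18;\{2,9,16\},\{0,6,12\},\{8,9,10\})$. Here $d=3$, the components are copies of the bipartite graph $B(6;\{3\},\{0,2,4\},\{3\})$, and the quotient is $K_3$. The $\chi$-preserving edges of $G$ are exactly the linking edges between cosets $0$ and $2$. Any cycle that visits each component once uses exactly one of them, an odd number. So Step 3 cannot produce a hamilton cycle of this graph.

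\emph{The fallback does not close the gap.} Splicing a component in through a $4$-cycle presupposes a cycle through the remaining components, and that need not exist. Take $B(30;\{2,15,28\},\{0,10,20\},\{12,15,18\})$.
\begin{itemize}
\item Here $d=5$ and the components are again bipartite.
\item The quotient is the $5$-cycle $0,2,4,1,3$.
\item Its edges $02$, $24$ and $13$ lift to $\chi$-preserving edges: three, an odd number, so Step 3 fails.
\item Deleting any component leaves a path of components, so there is no cycle to splice into.
\end{itemize}
The splice also needs an $m/2$-edge on the existing cycle positioned opposite one in the new component. Hamilton-connectedness or laceability gives no control over which $m/2$-edges a hamilton path uses.

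\emph{What is missing.} You need a construction in which some component of $H$ is traversed in two or more segments, so that an even number of $\chi$-preserving crossings can be arranged. You also need the path systems this requires inside a component, for example a hamilton path through a prescribed edge. Theorem~\ref{th_generalized_dihedral} alone does not supply these.
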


\begin{theorem}[\protect{\cite[Theorem 1.4]{BoPiZi2025_2ndpaper}}]\label{th_main}
Let $G=B(m; R, S, T)$ be a connected bicirculant graph  with $m\ge 3$ and $|S|\ge 3$.
Assume that the connected components of $H(m; S)$ are hamiltonian and that $R\cup T$ contains at least one element that is not coprime to
$\gcd(m, S)$ in the case where $\gcd(m, S)>1$. Then $G$ is hamiltonian.
\end{theorem}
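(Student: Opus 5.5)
The statement is Theorem~\ref{th_main}, quoted from our earlier paper. My plan is to contract the spoke-components and lift a hamilton cycle of the resulting quotient back to $G$.

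\textbf{Setup.} Put $d=\gcd(m,S)$. Since $0\in S$, the cyclic Haar graph $H(m;S)$ has exactly $d$ components $H_0,\dots,H_{d-1}$. The component $H_k$ contains the outer vertices $u_i$ with $i\equiv k \pmod d$ and the inner vertices $v_j$ with $j\equiv k\pmod d$, and each component is isomorphic to $H(m/d;S/d)$. By hypothesis every $H_k$ is hamiltonian; its valence is $|S|\ge 3$.

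\textbf{Case $d=1$.} Then $H(m;S)$ is a connected spanning subgraph of $G$, so it already contains a hamilton cycle of $G$.

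\textbf{Case $d>1$.} Contract each $H_k$ to a single vertex. The outer and inner edges induce edges between $H_k$ and $H_{k+a}$ for $a\in R\cup T$. Connectivity of $G$ forces $R\cup T\cup\{d\}$ to generate $\ZZ_m$, so the quotient is a connected circulant on $\ZZ_d$. The idea is to traverse the components along a closed walk (ideally a hamilton cycle) of this quotient. Inside each $H_k$ we use a hamilton path obtained by deleting one edge from a hamilton cycle of $H_k$. Consecutive paths are joined by an outer edge $u_iu_{i+a}$ or an inner edge $v_jv_{j+a}$.

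\textbf{Role of the non-coprime element.} The key step is handling closure, and parity matters here. Suppose $a\in R\cup T$ is not coprime to $d$, say $g=\gcd(a,d)>1$. Then the edges of type $a$ move within the residue classes modulo $g$. I would build the cycle as follows.
\begin{itemize}
\item Group the components into cosets of $\langle a\rangle$ in $\ZZ_d$.
\item Inside each coset, chain the components into a cycle using type-$a$ edges.
\item Merge the coset-cycles using the remaining generators, via the standard edge exchange on a $4$-cycle or ladder: replace two parallel edges $xx'$ and $yy'$ by the cross edges $xy$ and $x'y'$, where these come from translates by $\alpha$.
\end{itemize}
Because each $H_k$ is hamiltonian and admits the rotation $\alpha^d$, there is freedom to choose where the hamilton cycle of $H_k$ is cut. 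This lets us match endpoints: we need a path in $H_k$ between $u_i$ and $u_{i+a'}$ (or between a $u$ and a $v$) for the connecting edge we choose. I would use Theorem~\ref{th_generalized_dihedral}, i.e.\ hamilton-laceability of the bipartite components when $m/d$ is even. When $m/d$ is odd, I would cut two hamilton cycles of adjacent components along corresponding edges and reconnect them via the two parallel outer (or inner) edges, which gives a "ladder" splice.

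\textbf{Main obstacle.} The hardest part is guaranteeing that the splicing closes into a single cycle rather than several. When every element of $R\cup T$ is coprime to $d$, the type-$a$ edges from one component cannot return consistently, and parity obstructions arise; this is exactly the Petersen-like situation. The non-coprime hypothesis supplies a generator that returns to the same component, which allows a second splice to repair the parity. I would first try the induction on the number of components: splice one component at a time into a growing cycle, using a pair of parallel connector edges $u_iu_{i+a}$ and $u_{i+s}u_{i+s+a}$ with $s\in S-S$, taken along an edge of the current cycle. The real difficulty is checking that such an edge always lies on the current cycle.
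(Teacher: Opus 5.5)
There is a genuine gap. It lies at the step you call the main obstacle, and the coset-chaining step cannot work as stated.

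\textbf{The parity obstruction.} Each component $H_k$ of $H(m;S)$ is a balanced bipartite graph whose colour classes are its outer and its inner vertices. Every spoke at a vertex of $H_k$ stays inside $H_k$. Two consequences follow:
\begin{itemize}
\item Every spanning path of $H_k$ joins an outer vertex to an inner vertex, so there is no hamilton path of $H_k$ from $u_i$ to $u_{i+a'}$.
\item If a hamilton cycle of $G$ covers $H_k$ by a single segment, that segment is entered by an outer edge (type in $R$) and left by an inner edge (type in $T$).
\end{itemize}
Hence you cannot chain the components of a coset of $\langle a\rangle$ into a cycle using only type-$a$ edges. Your inductive splice with connectors $u_iu_{i+a}$ and $u_{i+s}u_{i+s+a}$ fails for the same reason. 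Besides, $u_iu_{i+s}$ with $s\in S-S$ is not an edge of $G$, so there is nothing on the current cycle to break.

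\textbf{Other unsupported steps.} The $4$-cycle exchange needs the translated edge to lie on the other cycle, and both cross edges to be of a single type in $R$ (or in $T$). For a spoke $u_iv_j$ translated by $c$, this means $c\in R\cap T$, which is in general empty. Theorem~\ref{th_generalized_dihedral} requires $m/d$ even, which is not assumed; the hypothesis gives only hamiltonicity of the components, not laceability. What remains of your plan is the hope that the splices close into one cycle, and that is the whole content of the theorem.

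\textbf{The missing idea.} The real question is how the non-coprime element is used. Its edges do not return to the same component unless $d\mid b$, which is the degenerate case $\mu=0$. Instead, the construction behind the cited result (Lemma~\ref{lemma_construction_2nd_paper}) works as follows.
\begin{itemize}
\item It works in a connected $B(m;a,S,b)$ with $a\in R$, $b\in T$, $a,b\ne m/2$ and, say, $\gcd(b,d)>1$. The case $m/2\in R\cap T$ is handled separately by Theorem~\ref{th_m/2}.
\item The $d$ components of $H(m;S)$ are arranged in the rectangular grid of the uniform representation. There are $\lambda+1=\gcd(b,d)\ge 2$ columns, namely the components of $B(m;\emptyset,S,\{\pm b\})$. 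All columns have the same height $\mu+1=d/\gcd(b,d)$.
\item Type-$b$ inner edges move vertically within a column, and type-$a$ outer edges move horizontally between consecutive columns.
\end{itemize}
On this grid an explicit snake is built (Figure~\ref{fig_ham_cycle_lambda_mu_even_H(m,S)_hamiltonian}, Remark~\ref{rem_construction}). A component is either covered by one spanning path, using one horizontal and one vertical connector, or split into pieces. A typical split is $C-u_0$, an inner-to-inner path used with two vertical connectors, together with the single vertex $u_0$, passed through by two horizontal connectors. This is exactly how the parity constraint above is met. For $\mu>0$, neither the wrap-around inner edges nor the edges from $K_\lambda$ back to $K_0$ are used.

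A rectangular grid with at least two columns is precisely what non-coprimality buys. When $a$ and $b$ are both coprime to $d$, the columns have unequal heights, which is why this paper needs the non-uniform representations of Section~\ref{sec_uniform_rep}. Without an explicit parity-respecting construction of this kind, your proposal does not prove the statement.
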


The following result will also be employed in our constructions.

\begin{lemma}[\protect{\cite[Lemma 3.1]{BoPiZi2025_2ndpaper}}]\label{lemma_bicirculant_m<5}
All connected bicirculant graphs of order $2m$ with $m\le 5$ are hamiltonian, except for the  complete graph $K_2$ and the 
Petersen graph $G(5,2)$. 
\end{lemma}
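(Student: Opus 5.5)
The plan is to reduce to a bounded number of spokes and then handle the remaining cases with a density argument on the spanning cyclic Haar graph $H(m;S)$. Since the lemma concerns only $m\le 5$, no new construction should be needed.

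First, split according to $s=|S|$. If $s\le 2$, Theorem~\ref{th_bicirculant_s2} applies directly. It says that every connected bicirculant whose vertices are incident to at most two spokes is hamiltonian, except for $K_2$ and the Alspach generalized Petersen graphs $G(m,2)$ with $m\equiv 5\pmod 6$. For $m\le 5$, the only such exceptions are $K_2$ (the case $m=1$) and the Petersen graph $G(5,2)$. This is exactly the list in the statement, so this case is settled.

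Now assume $s\ge 3$. Then $m\ge 3$, since $S\subseteq\ZZ_m$. I claim that $H(m;S)$ is already a hamiltonian spanning subgraph of $G=B(m;R,S,T)$. Since $G$ contains $H(m;S)$ as a spanning subgraph, this suffices. $H(m;S)$ is a bipartite graph with parts $V_1$ and $V_2$, each of size $m$, and every vertex has degree $s\ge 3$. For $m\le 5$ we have $s\ge 3>m/2$. By the Moon--Moser theorem (a balanced bipartite graph on $2m$ vertices with minimum degree greater than $m/2$ is hamiltonian), $H(m;S)$ has a hamilton cycle.

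If one prefers to rely only on results quoted in the paper, the cases can also be checked directly. If $s=3$, then $\gcd(m,S)=1$: for $m\in\{3,5\}$ this holds because $m$ is prime and $S$ contains a nonzero element, and for $m=4$ it holds because a $3$-subset of $\ZZ_4$ containing $0$ must contain an odd element. So $H(m;S)$ is a connected cubic cyclic Haar graph, which is hamiltonian by Theorem~\ref{th_cyclic_Haar_s3}. If $s\ge 4$, then $H(m;S)$ is either $K_{4,4}$ or $K_{5,5}$, or $K_{5,5}$ minus a perfect matching. All of these are obviously hamiltonian. In the $m=5$ case with $|S|=4$, an explicit hamilton cycle can be written down by hand.

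The only point requiring care is the bookkeeping in the first step. One must confirm that the exceptional graphs produced by Theorem~\ref{th_bicirculant_s2} with $m\le 5$ are precisely $K_2$ and $G(5,2)$. One must also confirm that no graph with $s\ge3$ can be exceptional, which the argument above shows. I do not expect any real obstacle.
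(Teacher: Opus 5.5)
The paper does not prove this lemma; it imports it from \cite{BoPiZi2025_2ndpaper}, so there is no in-paper proof to compare against.

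Your treatment of $s\ge 3$ is correct and self-contained. $H(m;S)$ is a balanced bipartite spanning subgraph with minimum degree $s\ge 3>m/2$, so Moon--Moser applies. Your explicit alternative is also right: $\gcd(m,S)=1$ whenever $s=3$ and $m\le 5$, so Theorem~\ref{th_cyclic_Haar_s3} applies; for $s\ge 4$ you get $K_{4,4}$, $K_{5,5}$ minus a perfect matching, or $K_{5,5}$. This is in fact the only part of the lemma the present paper uses. In the proof of Theorem~\ref{th_bicirculant_s3}, the case $s\le 2$ is handled by Theorem~\ref{th_bicirculant_s2}, and the lemma is invoked only for $s=3$, $m\le 5$.

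The weak step is $s\le 2$, which is where all the content of the lemma sits, since both exceptions have $s=1$. Citing Theorem~\ref{th_bicirculant_s2} is a valid deduction from what this paper quotes. As a proof of the lemma itself, however, it likely reverses the dependency in the source. There the lemma is Lemma~3.1, placed before the constructions, which assume $m>5$ (cf.\ Lemma~\ref{lemma_construction_2nd_paper}). It presumably serves as the $m\le 5$ base case for the main theorems, Theorem~\ref{th_bicirculant_s2} included, exactly as it does in the proof of Theorem~\ref{th_bicirculant_s3} here. If so, your argument is circular.

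The case is easy to check directly.
\begin{itemize}
\item For $m\le 4$, a symmetric subset of $\ZZ_m\smallsetminus\{0\}$ is determined by its size, so $|R|=|T|$ forces $R=T$. The connected graphs with $s\le 2$ are then: $K_2$ (for $m=1$); $K_2\square K_2=C_4$, $K_3\square K_2$, $Q_3$ and $K_4\square K_2$ (for $S=\{0\}$); graphs containing the spanning $2m$-cycle $H(m;S)$ (when $|S|=2$ and $\gcd(m,S)=1$); and, for $m=4$ and $S=\{0,2\}$, the dihedral Cayley graphs $B(4;R,S,R)$ of valence at least $4$, which are covered by Theorem~\ref{th_generalized_dihedral}.
\item For $m=5$ and $|S|=2$, $H(5;S)$ is a spanning $10$-cycle.
\item For $m=5$ and $S=\{0\}$, either $R=T$, giving $C_5\square K_2$ or $K_5\square K_2$, or $\{R,T\}=\{\{\pm1\},\{\pm2\}\}$, giving the Petersen graph.
\end{itemize}
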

\smallskip

Now we review some properties of bicirculants, see \cite{GT} or \cite{BoPiZi2025_2ndpaper}. Recall that we always assume that $0 \in S$ in a bicirculant  $B(m;R,S,T)$.  
We will use the following notation.
Let $A=\{a_1,a_2,\dots,a_\ell\}$ be a set and let $i$ be an integer. We define $A-i=\{a-i \ | \ a \in A \}$ and $A/i= \{a/i \ | \ a \in A \}$. Moreover,  the notation $\gcd(A)$ should be understood as $\gcd(a_1,a_2,\dots,a_\ell)$ and $\gcd(i,A)$ as $\gcd(i,\gcd(A))$.

\begin{proposition}  \label{prop:connected}
A bicirculant graph $B(m;R,S,T)$  is connected if and only if $\gcd(m,R,S,T)=1$. 
\end{proposition}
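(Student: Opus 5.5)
The plan is to show that the connected component of $u_0$ is all of $V$ exactly when $\gcd(m,R,S,T)=1$. Let $d=\gcd(m,R,S,T)$ and work with the subgroup $\langle R\cup S\cup T\rangle\le\ZZ_m$. Since $0\in S$, this subgroup is the same as $\langle d\rangle$, the set of multiples of $d$ in $\ZZ_m$, and it has size $m/d$.

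For necessity, I will assume $d>1$ and exhibit a proper union of components. Assign to $u_i$ and to $v_i$ the label $i \bmod d$; this is well defined because $d\mid m$. Every outer edge $u_iu_{i+j}$ with $j\in R$, every inner edge $v_iv_{i+j}$ with $j\in T$, and every spoke $u_iv_{i+j}$ with $j\in S$ joins vertices with the same label, since $d$ divides $j$. Hence each edge stays inside a label class. The class $\{u_i,v_i : d\mid i\}$ is then a nonempty union of components, and it is proper because $d>1$. So the graph is disconnected.

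For sufficiency, I will assume $d=1$, so that $\langle R\cup S\cup T\rangle=\ZZ_m$. The key step is that the component $C$ of $u_0$ is invariant under $\alpha$ whenever $\alpha$ maps some vertex of $C$ into $C$.
\begin{itemize}
\item For $j\in R$, the edge $u_0u_j$ gives $u_j\in C$.
\item For $j\in S$, the path $u_0v_j$ followed by $v_ju_0$ is not useful on its own. Instead, use the path $u_0,v_0,u_{-j}$: here $v_0=v_{0}$ is adjacent to $u_0$ because $0\in S$, and $v_0$ is adjacent to $u_{-j}$ because $u_{-j}v_{-j+j}$ is an edge. This puts $u_{-j}$ in $C$.
\item For $j\in T$, the path $u_0,v_0,v_j,u_j$ uses the spoke of type $0$ twice and puts $u_j$ in $C$.
\end{itemize}
In each case $\alpha^{\pm j}(u_0)\in C$. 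Because $\alpha$ is an automorphism, $\alpha^{\pm j}(C)$ is the component of $\alpha^{\pm j}(u_0)$, which is $C$ itself. So $C$ is invariant under $\alpha^{g}$ for every $g\in\langle R\cup S\cup T\rangle=\ZZ_m$. It follows that $u_i\in C$ for all $i$, and then each $v_i\in C$ through the edge $u_iv_i$. Therefore the graph is connected.

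No step here is a real obstacle. The only point that needs care is the $S$-case, where the spoke of type $0$ must be used to convert a spoke shift into a shift on the outer vertices.
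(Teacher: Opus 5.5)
Your proof is correct. The paper gives no proof of its own here; it quotes the proposition as a standard fact with a pointer to \cite{GT}.

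The standard route there is voltage graphs. $B(m;R,S,T)$ is the derived graph of a two-vertex base graph with loops of voltages $R$ at one vertex, loops of voltages $T$ at the other, and parallel edges of voltages $S$ between them. A derived graph is connected iff the net voltages of closed walks at a base vertex generate $\ZZ_m$.

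Your argument is an elementary unpacking of that criterion:
\begin{itemize}
\item the walks $u_0,u_j$, then $u_0,v_0,u_{-j}$, then $u_0,v_0,v_j,u_j$ are lifts of closed base walks with net voltages $j$, $-j$, $j$;
\item your stabilizer argument with powers of $\alpha$ stands in for the covering lemma;
\item your labelling mod $d$ is the observation that if all voltages lie in the proper subgroup $\langle d\rangle$, then each coset is a union of components.
\end{itemize}
The voltage route is more general and gives the number of components (the index of the local group, cf.\ Proposition~\ref{pro:d_connected_components}) for free. Yours has the advantage of being self-contained.

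Two points of precision:
\begin{itemize}
\item Your ``key step'' should be stated for $\alpha^k$ with arbitrary $k$, not for $\alpha$ itself.
\item The passage from the generators to all of $\langle R\cup S\cup T\rangle$ rests on the fact that $\{k\in\ZZ_m : \alpha^k(C)=C\}$ is a subgroup of $\ZZ_m$. Say this explicitly; it also takes care of the sign in the $S$-case.
\end{itemize}

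Finally, $0\in S$ is not what gives $\langle R\cup S\cup T\rangle=\langle d\rangle$; that equality holds regardless. Its real role is in your sufficiency paths, where it is essential: $B(3;\emptyset,\{1\},\emptyset)$ is a perfect matching even though $\gcd(3,1)=1$.
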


In the case that a bicirculant graph is disconnected, it is composed of isomorphic connected components. 

\begin{proposition}\label{pro:d_connected_components} 
Let $G = B(m;R,S,T)$. Suppose $\delta = \gcd(m,R,S,T)>1$. 
Then $G$ is a disjoint union of $\delta$   isomorphic graphs $G_0, \dots G_{\delta-1}$ such that $u_i \in G_i$, $i = 0,\dots,\delta-1$. Moreover, each graph $G_i$  is connected and isomorphic to the graph $B(m/\delta;R/\delta,S/\delta,T/\delta)$. 
\end{proposition}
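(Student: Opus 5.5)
The statement to prove is Proposition \ref{pro:d_connected_components}: if $\delta=\gcd(m,R,S,T)>1$, then $B(m;R,S,T)$ splits into $\delta$ isomorphic connected components, each isomorphic to $B(m/\delta;R/\delta,S/\delta,T/\delta)$.

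The plan is to exhibit the components explicitly through residues modulo $\delta$. Since $\delta$ divides $m$, reduction modulo $\delta$ is well defined on $\ZZ_m$. For $i=0,\dots,\delta-1$ set
$$V(G_i)=\{u_j,\ v_j \mid j\equiv i \pmod \delta\}.$$
Every outer edge $u_ju_{j+a}$ ($a\in R$), inner edge $v_jv_{j+b}$ ($b\in T$) and spoke $u_jv_{j+c}$ ($c\in S$) changes the index by an element of $R\cup T\cup S$, and every such element is divisible by $\delta$. Hence no edge joins $V(G_i)$ to $V(G_{i'})$ for $i\ne i'$, and the graph is the disjoint union of the induced subgraphs $G_i$. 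By construction $u_i\in G_i$.

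Next define a map $\varphi_i\colon V(G_i)\to V(B(m/\delta;R/\delta,S/\delta,T/\delta))$ by
$$u_{i+k\delta}\mapsto u_k,\qquad v_{i+k\delta}\mapsto v_k,\qquad k\in \ZZ_{m/\delta}.$$
This is a bijection, because the indices $j\equiv i\pmod\delta$ in $\ZZ_m$ correspond bijectively to $k\in\ZZ_{m/\delta}$. Take an edge $u_{i+k\delta}u_{i+k\delta+a}$ with $a=a'\delta$. Its image is $u_ku_{k+a'}$ with $a'\in R/\delta$. The same holds for inner edges and spokes, and each step reverses. One small point needs checking here: $R/\delta$ is taken as a subset of $\ZZ_{m/\delta}$, and the map $a\mapsto a/\delta$ is well defined and injective from $\delta\ZZ_m$ onto $\ZZ_{m/\delta}$. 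Also, $0\in S/\delta$ and $R/\delta=-R/\delta$ are preserved. So $\varphi_i$ is an isomorphism, and all $G_i$ are isomorphic to this bicirculant and hence to one another. Alternatively, the rotation $\alpha$ maps $G_i$ onto $G_{i+1}$.

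For connectivity, Proposition \ref{prop:connected} says it suffices to show
$$\gcd\bigl(m/\delta,\,R/\delta,\,S/\delta,\,T/\delta\bigr)=1.$$
This is immediate from the identity $\gcd(m,R,S,T)=\delta\cdot\gcd(m/\delta,R/\delta,S/\delta,T/\delta)$.

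The only mildly delicate step is the bookkeeping of division by $\delta$ inside $\ZZ_m$ versus $\ZZ_{m/\delta}$. Elements of $R$ are residues mod $m$, so divisibility by $\delta$ is well defined only because $\delta\mid m$. I would state this explicitly before defining $\varphi_i$.
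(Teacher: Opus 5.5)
Your proof is correct. You take the residue classes modulo $\delta$ as the vertex sets of the $G_i$, and the relabeling $u_{i+k\delta}\mapsto u_k$, $v_{i+k\delta}\mapsto v_k$ is an isomorphism onto $B(m/\delta;R/\delta,S/\delta,T/\delta)$. The one check you leave implicit, $0\notin R/\delta\cup T/\delta$, follows from the injectivity of $a\mapsto a/\delta$ that you already note. The identity $\gcd(m,R,S,T)=\delta\cdot\gcd(m/\delta,R/\delta,S/\delta,T/\delta)$ together with Proposition~\ref{prop:connected} then gives connectivity.

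The paper gives no proof of its own here. It quotes the proposition as a standard property of bicirculants, citing Gross--Tucker and the authors' earlier paper, and your argument is the standard one behind that fact.
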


Different parameters may give the same bicirculants. In particular, it is easy to verify that the following result holds.

\begin{lemma}  \label{lemma:iso}
Let $G = B(m;R,S,T)$ be a bicirculant and let $c \in S$. Then the graph $B(m;R,S-c,T)$ is isomorphic to the graph $G$.
\end{lemma}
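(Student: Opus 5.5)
The statement is Lemma \ref{lemma:iso}: for $c\in S$, the graph $B(m;R,S-c,T)$ is isomorphic to $G=B(m;R,S,T)$. I will exhibit an explicit isomorphism that shifts the inner vertices and fixes the outer ones.

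Define $\varphi: V(G)\to V(B(m;R,S-c,T))$ by
\[
\varphi(u_i)=u_i, \qquad \varphi(v_i)=v_{i-c}, \qquad i\in\ZZ_m .
\]
This map is a bijection on the vertex set, since $i\mapsto i-c$ is a permutation of $\ZZ_m$. Moreover $0=c-c\in S-c$, so $B(m;R,S-c,T)$ is a legitimate bicirculant in the paper's normalization.

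Next I check the three edge types in turn.

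\textbf{Outer edges.} $\varphi$ fixes the outer vertices, so an outer edge $u_iu_{i+a}$ with $a\in R$ goes to itself. It is an outer edge of the target graph, which has the same set $R$.

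\textbf{Inner edges.} An inner edge $v_iv_{i+b}$ with $b\in T$ goes to $v_{i-c}v_{i-c+b}$. This is an inner edge of type $b$ in the target graph.

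\textbf{Spokes.} A spoke $u_iv_{i+s}$ with $s\in S$ goes to $u_iv_{i+(s-c)}$. This is a spoke of type $s-c\in S-c$.

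Thus $\varphi$ maps each of the three edge families bijectively onto the corresponding family of the target graph. The map $s\mapsto s-c$ is a bijection $S\to S-c$, and the index shift is a bijection as well. Hence $\varphi$ is a graph isomorphism.

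The only point needing care is that the definition of a bicirculant is invariant under this reparametrization: $R$ and $T$ are unchanged and $0\in S-c$. I have already checked this above.
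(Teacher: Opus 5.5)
Your proof is correct and is the intended argument: the paper only says the lemma is ``easy to verify'' and writes no proof. Your map, which fixes every $u_i$ and sends $v_i\mapsto v_{i-c}$, with its edge-by-edge check (spoke type $s\mapsto s-c$, outer and inner types unchanged, and $0\in S-c$), is exactly that routine verification.
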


\section{Uniform and non-uniform representations} \label{sec_uniform_rep}

In this section we consider connected bicirculants of form  $B(m; \{a, -a\}, S, \{b, -b\})$  with $a, b \ne m/2$ and $|S| \ge 2$. For convenience we will denote such a 
bicirculant simply by $B(m; a, S, b)$. In \cite{BoPiZi2025_2ndpaper} we presented a construction of a hamilton cycle in a graph $B(m; a, S, b)$ using hamilton cycles in the 
connected components of its subgraph $H(m; S)$. That construction relies on a representation that can be formed for graphs $B(m; a, S, b)$ for which at least one of the integers $a$ 
or $b$ is not coprime to $\gcd(m, S)$.
This representation   will be called a \emph{uniform representation} of $B(m; a, S, b)$ and can be described as follows; see \cite{BoPiZi2025_2ndpaper}.

\smallskip

Let $G=B(m; a, S, b)$ be a connected bicirculant graph with $|S|\geq 2$, $\gcd(m, S)>1$,   $a,b\ne m/2$,  and at least one of the integers $a$, $b$, say $b$,  not coprime to $\gcd(m,S)$. From these assumptions it follows also that $m > 5$. We denote with $K$ the subgraph obtained from $G$ by removing the edges of type $a$ from $G$, that is,  $K$ is the bicirculant graph $B(m; \emptyset,S, \{b,-b\})$. We set $\lambda=\gcd(m, S, b)-1$, $\mu=\gcd(m, S)/\gcd(m, S, b)-1$ and denote with $K_0,\ldots, K_{\lambda}$ the connected components of $K$. By the assumptions, we have $\lambda >0$.

The connected components of $K$ form a partition of the components of the cyclic Haar graph $H(m; S)$ induced by the spokes of $G$.
More specifically, each component of $K$ consists of $(\mu+1)$ components of $H(m; S)$ that are connected by edges of type $b$.  
For $0\le j\le\lambda$, we denote the connected components of $H(m; S)$ that are contained in $K_j$ by $H_{i,j}$ ($0\le i\le\mu$). 
Moreover,  $H_{0,0}$ is the component of $H(m; S)$ containing the vertex $u_0$, and consequently $K_0$ is the component of $K$ containing $u_0$. 

The graph $G$ can be represented as follows: arrange the components $H_{i,j}$ with the same index $i$ in the same $i$-th row, and the components $H_{i,j}$ with the same index $j$ in the same $j$-th column. In this setting, the outer vertices in $H_{i, j}$ are adjacent to the outer vertices in $H_{i, j+1}$ through edges of type $a$ for $0\le j\le\lambda-1$, and the inner vertices in $H_{i, j}$ are adjacent to the inner vertices in $H_{i+1, j}$ through edges of type $b$ for $0\le i\le\mu$ (the subscript 
$i$ is considered modulo $\mu+1$). We note that the outer vertices in $K_0$ are also adjacent to the outer vertices of $K_{\lambda}$.

The above representation of $G$ led us in \cite{BoPiZi2025_2ndpaper} to call the component $K_j$  `the vertical $j$-th component' of $G$. In this paper, we call 
such a representation a \emph{uniform representation} or a $(\lambda,\mu)$-\emph{uniform representation} of $G$. Roughly speaking, a $(\lambda,\mu)$-uniform representation of $G$ consists of $(\lambda+1)$ vertical components $K_j$ of $K=B(m; \emptyset,S, \{b,-b\})$, each of which consists of $(\mu+1)$ components of $H(m; S)$. This property makes the representation `uniform'. Figure \ref{fig_ham_cycle_lambda_mu_even_H(m,S)_hamiltonian} shows a bicirculant graph that has a $(\lambda,\mu)$-uniform representation with  $\lambda=3$ and $\mu=2$. The components $H_{i,j}$ of $H(m; S)$ are represented by circles that are  arranged in a rectangular grid consisting of $\lambda+1$ vertical columns, each of which contains $\mu+1$ components of $H(m; S)$.

\smallskip

Uniform representations cannot be defined when both integers $a, b$ are coprime to $\gcd(m, S)$. In this case, we  define non-uniform representations, as we explain below.

\smallskip

Let $G=B(m; a, S, b)$ be a connected bicirculant graph with  $m>5$,
$|S|\geq 2$, $\gcd(m, S)>1$,  $a,b\ne m/2$,  and both integers $a$, $b$, coprime to $\gcd(m,S)$. 
Let $\lambda, \mu$ be positive integers, and let $\rho$ be a nonnegative integer such that $0\le\rho<\lambda$ 
and $(\rho+1)(\mu+1)+(\lambda-\rho)\mu = \gcd(m,S)$. 
We say that $G$ has a $(\lambda,\mu,\rho)$-\emph{non-uniform representation} if it can be represented as a vertex-disjoint union of $(\lambda+1)$ subgraphs $K_j$  with $0\le j\le\lambda$, which partition the components of $H(m; S)$ as follows.
There are $(\rho+1)$ subgraphs $K_j$ that contain $(\mu+1)$ components of $H(m; S)$; specifically, $K_j$ for $0\le j\le\rho$ contains the components $H_{i, j}$ with $0\le i\le\mu$. Each of the remaining subgraphs $K_j$, for $\rho+1\le j\le\lambda$,  contains $\mu$ components $H_{i, j}$ with $0\le i\le\mu-1$.

The subgraphs $K_j$ are called the \emph{vertical components} of $G$, in analogy to uniform representations.
Moreover, the adjacencies between outer and inner vertices of the components $H_{i,j}$ are the same as in the case of uniform representations, with the exception of 
the adjacencies between the inner vertices of $H_{0,j}$ and the inner vertices of $H_{\mu, j}$. 
In detail, the inner vertices in $H_{i,j}$ are adjacent to the inner vertices in $H_{i+1,j}$ through edges of type $b$, for $0\le i\le\mu-2$ and $0\le j\le\lambda$, and also for $i=\mu-1$ and $0\le j\le\rho$; the outer vertices in $H_{i,j}$ are adjacent to the outer vertices in $H_{i,j+1}$ through edges of type $a$, for $0\le i\le\mu-1$ and $0\le j\le\lambda-1$,  and also for $i=\mu$ and $0\le j\le\rho-1$.  

Figure \ref{fig_extension_step1} shows a bicirculant graph that has a $(\lambda, \mu, \rho)$-non-uniform representation with $\lambda=\mu=3$ and $\rho=2$.
The components $H_{i,j}$ of $H(m; S)$ are represented by circles that are  arranged in a non-rectangular grid consisting of $\rho+1$ vertical columns, each with $\mu+1$ components of $H(m; S)$, and of $\lambda-\rho$ vertical columns, each with $\mu$ components of $H(m; S)$. The vertical columns are the vertical components $K_j$, with $0\le j\le\lambda$, that define the non-uniform representation of $G$. Equivalently, the components $H_{i,j}$  of $H(m; S)$ are  arranged in a non-rectangular grid consisting of $\mu$ rows with $\lambda+1$ components and one row with $\rho + 1$ components. 

\smallskip

The next lemma shows that every graph  $B(m; a, S, b)$ satisfying the assumptions introduced above admits an $(\lambda,\mu,\rho)$-non-uniform representation with $\lambda\ge 1$,  $\mu> 1$ and $0\le\rho<\lambda$, provided that $b\not\equiv\pm a\pmod{\gcd(m,S)}$. 
Moreover, we determine the exact values of the parameters $\lambda$,  $\mu$ and $\rho$.

\begin{lemma}\label{lem_nonuniform_representation}
Let  $G=B(m; a, S, b)$ be a connected bicirculant graph with 
$m>5$, $|S|\ge 2$, $\gcd(m,S)>1$, $a,b \ne m/2 $ both coprime to $\gcd(m, S)$ and $b\not\equiv\pm a\pmod{\gcd(m,S)}$. 
Then $G$ admits a $(\lambda, \mu,\rho)$-non-uniform representation with $\lambda \ge 1$,  $1<\mu<\gcd(m,S)$   and $0\le\rho<\lambda$. 

The parameters $\lambda,\mu, \rho$ for such a representations can be obtained as follows. 
Let $h$ be the solution of the congruence $b\equiv ha\pmod{\gcd(m,S)}$ with the property $1< h<\gcd(m,S)-1$. Let $h^*=\min\{h, \gcd(m,S)-h\}$. Then $\lambda=h^*-1$ and $\mu,\rho+1$ are the quotient and the remainder, respectively,  in the division of $\gcd(m,S)$ by $h^*$, i.e.,  $\gcd(m,S)=\mu h^*+\rho+1$.
\end{lemma}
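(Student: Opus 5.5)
The plan is to index the components of $H(m;S)$ explicitly and then read off the rows and columns from arithmetic modulo $d=\gcd(m,S)$.

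\textbf{Step 1: labelling the components.} Write $d=\gcd(m,S)$. Since $0\in S$ and every spoke joins $u_i$ to $v_{i+c}$ with $d\mid c$, the subgraph $H(m;S)$ has exactly $d$ components. We label them $C_r$, $r\in\ZZ_d$, where $C_r$ contains the vertices $u_i,v_i$ with $i\equiv r\pmod d$. An edge of type $a$ joins outer vertices of $C_r$ and $C_{r+a}$. An edge of type $b$ joins inner vertices of $C_r$ and $C_{r+b}$.

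\textbf{Step 2: normalising $a$ and $b$.} Since $a$ is coprime to $d$, the map $r\mapsto r/a$ relabels the components so that $a$-edges join $C_r$ to $C_{r+1}$ and $b$-edges join $C_r$ to $C_{r+h}$, where $b\equiv ha\pmod d$. Then $h$ is coprime to $d$, and $h\not\equiv 0,\pm1$ because $b\not\equiv\pm a$. Hence $1<h<d-1$, which forces $d\ge 5$. Replacing $b$ by $-b$, which gives the same edge set, or reversing the cyclic order, we may use $h^*=\min\{h,d-h\}$, with $2\le h^*\le d/2$ and $h^*<d-1$.

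\textbf{Step 3: the division and the grid.} Divide $d=\mu h^*+\rho+1$ with $0\le \rho+1<h^*$. Because $\gcd(h^*,d)=1$ and $h^*\ge2$, the remainder is nonzero, so $\rho+1\ge1$. Setting $\lambda=h^*-1$ gives $0\le\rho<\lambda$, $\lambda\ge1$, and $\mu=\lfloor d/h^*\rfloor\ge2$ (as $h^*\le d/2$), with $\mu<d$. Now place $C_r$ for $r=ih^*+j$, with $0\le j\le\lambda$ and $0\le i$, in row $i$ and column $j$. Rows $0,\dots,\mu-1$ are full, containing $\lambda+1=h^*$ components each, and row $\mu$ contains $j=0,\dots,\rho$. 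This matches $(\rho+1)(\mu+1)+(\lambda-\rho)\mu=d$.

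\textbf{Step 4: checking the adjacencies.} Within a row, $C_{ih^*+j}$ and $C_{ih^*+j+1}$ are joined by $a$-edges for $j<\lambda$ (for $j<\rho$ in the last row), as required. Within a column, $C_{ih^*+j}$ and $C_{(i+1)h^*+j}$ are joined by $b$-edges whenever both exist. This holds for $i\le\mu-2$ in every column, and for $i=\mu-1$ exactly when $j\le\rho$. The definition also permits the remaining wrap-around adjacencies, namely $u$-edges from a row end to the next row start and $b$-edges from the bottom of a column to the top of another. These are the exceptional adjacencies the definition leaves unspecified, so they need not be checked against the grid.

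\textbf{Main obstacle.} The main difficulty is this bookkeeping: confirming that the definition of a non-uniform representation requires only the listed adjacencies, and that $\mu>1$ and $\rho<\lambda$ hold in every case. The first thing to check is the boundary case $h^*=2$, where $d$ is odd, $\lambda=1$, $\rho=0$ and $\mu=(d-1)/2\ge2$.
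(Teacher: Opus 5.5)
Your proposal is correct and follows essentially the paper's argument. You relabel the $\gcd(m,S)$ components of $H(m;S)$ so that $a$-edges step by $1$ and $b$-edges by $h$, normalize to $h^*$ via $b\mapsto -b$, lay the components out in rows of length $h^*$ according to $\gcd(m,S)=\mu h^*+\rho+1$, and check the listed adjacencies. The only cosmetic difference is that you obtain $\mu\ge 2$ directly from $h^*\le \gcd(m,S)/2$, whereas the paper argues by contradiction that $\mu=1$ would force $\gcd(m,S)-h^*=\rho+1<h^*$.
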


\begin{remark}
Notice that the congruence $b\equiv ha\pmod{\gcd(m,S)}$ has a unique solution $h$ modulo $\gcd(m,S)$ since $a$ is coprime to $\gcd(m,S)$.  Moreover, because $b$ is also coprime to $\gcd(m,S)$, it follows that $h$ is coprime to $\gcd(m,S)$ as well. The assumption $b\not\equiv\pm a\pmod{\gcd(m,S)}$ further implies that $1< h<\gcd(m,S)-1$. Hence, $\gcd(m,S)$ is not divisible by $h$. 
\end{remark}

\begin{proof}
We set $g=\gcd(m,S)-1$  and let $h$ be the solution of the congruence $b\equiv ha\pmod{g+1}$. Let $h^*=\min\{h, \gcd(m,S)-h\}$.
Without loss of generality we can assume that $h^*=h$ (since  we can interchange $b$  and $-b$). Set $\lambda = h-1$.
Since $h>1$, we can  set $\mu$ and $\rho+1$ as the the quotient and the remainder in the division of $g+1$  by $h$, respectively. Consequently, $g+1=\mu h^*+\rho+1$. 
Note that $\mu\ge 1$ and  $1\le\rho+1<h$,  since $h<g+1$ and 
$g+1$ is not divisible by $h$.

We denote with $H_0,\ldots, H_g$ the connected components of $H(m;S)$; $H_0$ is the component containing the vertex $u_0$.
In this notation, the outer vertices of $H_i$ are adjacent to the outer vertices of $H_{i+1}$ through the edges of type $a$,  and the inner vertices of $H_{i}$ are adjacent to the inner vertices of $H_{i+h}$ through the edges of type $b$,  since $b\equiv ha\pmod{g+1}$.  This congruence of $b$ modulo $(g+1)$ defines a partition of the components of $H(m; S)$.  More specifically,  we can partition the components of $H(m; S)$ into the sets $\mathcal H_0,\ldots,\mathcal H_{\mu}$ that are defined as follows:
$$
\begin{array}{lll}
\mathcal H_i=&\{H_{ih+j} : 0\le j\le h-1\}, & 0\le i\le\mu-1;\\
\mathcal H_{\mu}=&\{H_{\mu h+j} : 0\le j\le\rho\}.&\\
\end{array}
$$

\noindent Each $\mathcal H_i$, with $0\le i\le\mu-1$, consists of $h$ connected components of $H(m; S)$, while $\mathcal H_{\mu}$ consists of $(\rho+1)$ components. 
Notice that the element $H_{(i+1)h-1}$ 
in $\mathcal H_i$ is connected to $H_{(i+1)h}$ in $\mathcal H_{i+1}$ through edges of type $a$, since the components
$H_0,\ldots, H_g$ are cyclically connected through edges of type $a$.

In this setting we denote the components of $H(m; S)$ as follows: we set $H_{i,j}=H_{ih+j}\in\mathcal H_i$ for $0\le i\le\mu-1$ and $0\le j\le h-1$,
and $H_{\mu,j}=H_{\mu h+j}\in\mathcal H_{\mu}$ for $0\le j\le\rho$. 

In this notation one can see that $H_{i,j}$ and $H_{i,j+1}$ in $\mathcal H_i$ are connected through edges of type $a$, for every $0\le i\le\mu-1$ and $0\le j\le h-2$,
and also for $i=\mu$ and $0\le j\le\rho-1$; the components $H_{i,j}\in\mathcal H_i$ and  $H_{i+1,j}\in\mathcal H_{i+1}$ are connected through edges of type $b$,
for every $0\le i\le\mu-2$ and $0\le j\le h-1$, and also for $i=\mu-1$ and $0\le j\le\rho$. 
In other words, the partition $\mathcal H_0,\ldots,\mathcal H_{\mu}$ provides
a $(\lambda, \mu,\rho)$-non-uniform representation of $G$ with $\lambda=|\mathcal H_i|-1=h^*-1\ge 1$ for some $i<\mu$, $\mu\ge 1$ and $0\le\rho<\lambda$.  

It remains to show that $\mu>1$.  Suppose to the contrary that $\mu=1$. 
Then $g+1=h+\rho + 1$ and therefore $g+1-h = \rho +1$. Recall that $\rho+1<h$. Since $h=h^*=\min\{h,g+1-h\}$, we get a contradiction.
\end{proof}

Observe that a $(\lambda,\mu,\rho)$ representation of a bicirculant graph $B(m;a,S,b)$ is not unique; the proof of Lemma \ref{lem_nonuniform_representation}  provides  four distinct non-uniform representations for such a graph, at least two of them having $\mu >1$.
The notion of a $(\lambda,\mu,\rho)$-non-uniform representation generalizes the concept of a $(\lambda,\mu)$-uniform representation of a bicirculant graph $B(m;a,S,b)$, which is defined only when at least one of  $a$ or $b$  is not coprime to $\gcd(m,S)$,   if we allow  $\rho=\lambda$.

\section{Constructions for bicirculants with a non-uniform representation }\label{sec_extension_method1}

In \cite[Lemma 3.2 ]{BoPiZi2025_2ndpaper}, we presented a construction for hamilton cycles that can be applied to bicirculant graphs $B(m; a, S, b)$ for which at least one of $a$ or $b$ is not coprime to $\gcd(m, S)$, i.e., it can be applied to bicirculants with a uniform representation. 
In this section, we extend the construction presented in the proof of \cite[Lemma 3.2 ]{BoPiZi2025_2ndpaper} for bicirculant graphs $B(m; a, S, b)$ that have a $(\lambda,\mu)$-uniform representation with $\mu >0$ to bicirculant graphs $B(m; a, S, b)$ that have a $(\lambda,\mu,\rho)$-non-uniform representation with $\mu>1$. 
We will report the statement of \cite[Lemma 3.2 ]{BoPiZi2025_2ndpaper} later in this section together with some useful remarks.  Before that, we introduce the notation that will be used.

\smallskip

Let $G=B(m; a, S, b)$ be a bicirculant graph that has a $(\lambda, \mu, \rho)$-non-uniform representation with $\mu>1$. Let $K_j$, with $0\le j\le\lambda$, be  the vertical components that partition the connected components of the subgraph $H(m; S)$ induced by the spokes of $G$. Following the results in Section \ref{sec_uniform_rep}, each vertical component $K_j$, with $0\le j\le\rho$, contains the components $H_{i, j}$ of $H(m; S)$ with $0\le i\le\mu$, and each of the remaining vertical components $K_j$, with $\rho+1\le j\le\lambda$,  contains $\mu$ components $H_{i, j}$ with $0\le i\le\mu-1$. Moreover, $H_{0,0}$ is the component of $H(m; S)$ that contains the vertex $u_0$. If necessary, we replace $b$ with $-b$, so that the vertex $v_b$ is in the component $H_{1,0}$. We need to do this when $b\equiv ha\pmod{\gcd(m,S)}$ with $h > \gcd(m,S)/2$; see the proof of Lemma \ref{lem_nonuniform_representation}.

By Lemma \ref{lem_ham_path_cyclic_Haar}, there exists a hamilton path $P$ in $H_{0,0}$.
As in \cite{BoPiZi2025_2ndpaper}, we can assume that $P$ is a hamilton path with origin in $v_s$, for some $s \in \ZZ_n$, and terminus in $u_0$; we denote with $u_z$ an arbitrary vertex of $P$ different from $u_0$ and with $v_h$, $v_k$ the vertices adjacent to $u_z$ in $P$. Moreover, we denote by $u_t$ the vertex adjacent to $v_s$ in $P$.
For convenience of notation, the remaining vertices of $P$ will be simply denoted with $u_i$, $v_i$ instead of $u_{x_i}$, $v_{y_i}$.
More specifically, we set 

$$P=(v_s, u_t,   \ldots,   v_h,  u_z,  v_k,  \ldots,   u_1,  v_0 , u_0).$$

We will use the notation $x\,P\,y$ to denote a path from the vertex $x$ to the vertex $y$.
It may happen that $v_0=v_k$ and $u_1=u_z$ or $v_s=v_h$ and $u_t=u_z$, but for $|S|\ge 4$ 
we can assume that the vertices $u_0, u_1, u_z, u_t$ and $v_0, v_s, v_h, v_k$ are pairwise distinct. Moreover, 
it is understood that $u_0$ and $v_s$ are adjacent if the connected components of $H(m; S) $ are hamiltonian.
In this case, we denote with $C$ the hamilton cycle in $H_{0,0}$ that we obtain by adding the edge $u_0v_s$ 
to $P$, that is, $C=P\cup u_0v_s$. 

The connected components $H_{i, j}$ of $H(m; S)$ contain copies of $P$. We denote with $P_{i,j}$  the copy of the path $P$ in the component $H_{i,j}$; $P_{0,0}$ corresponds to the path $P$. The path $P_{i,j}$ can be obtained from $P$ by adding $ib+ja\pmod m$ to the subscripts of the vertices in $P$ (see the proof of Lemma \ref{lem_nonuniform_representation}).

Accordingly, the vertices in $P_{i,j}$ will be denoted with $u^{i,j}_x$, $v^{i,j}_x$, where $u^{i,j}_x=u_{x+ib+ja}$, $v^{i,j}_x=v_{x+ib+ja}$.  The outer vertices $u^{i,j}_x$ in $P_{i,j}$ are adjacent to the outer vertices $u^{i,j+1}_x$ in $P_{i, j+1}$ through 
the edges of type $a$; the inner vertices $v^{i,j}_x$ in $P_{i,j}$ are adjacent to the inner vertices $v^{i+1,j}_x$ in $P_{i+1, j}$ 
through the edges of type $b$. We stress that a non-uniform representation does not include the edges from the inner vertices in $H_{0,j}$ to the inner vertices in $H_{\mu',j}$, where $\mu'=\mu$ if $0\le j\le\rho$, while $\mu'=\mu-1$ if $\rho+1\le j\le\lambda$. 

For simplicity of notation, we will denote the copy of the subpath $u_x\,P\,v_y$ from the vertex $u_x$ to the vertex $v_y$ of $P$  that is contained in $H_{i, j}$ with $u_x\,P_{i,j}\,v_y$ or with  $u_x\,P\,v_y$  in $H_{i, j}$;  
it is understood that the vertices $u_x$, $v_y$ in the notation $u_x\,P_{i,j}\,v_y$, or in the notation $u_x\,P\,v_y$ in $H_{i,j}$,  correspond to the vertices $u^{i,j}_x$, $v^{i,j}_y$.  

With this notation, the reader can refer to Figure \ref{fig_ham_cycle_lambda_mu_even_H(m,S)_hamiltonian} showing the construction we give in the proof of  \cite[Lemma 3.2]{BoPiZi2025_2ndpaper} for a connected bicirculant graph having a
uniform representation; in particular a $(\lambda, \mu)$-uniform representation with $\lambda=3$ and $\mu=2$. Below, we also report the statement of \cite[Lemma 3.2]{BoPiZi2025_2ndpaper}.

\begin{lemma}[\protect{\cite[Lemma 3.2]{BoPiZi2025_2ndpaper}}]\label{lemma_construction_2nd_paper}
Let $G=B(m; a, S, b)$ be a connected bicirculant graph with $m>5$, $|S|\geq 2$, $\gcd(m, S)>1$, $a,b\ne m/2$, and 
$\gcd(m, S, b) >1$.  
Assume that the connected components of $H(m; S)$ are hamiltonian.  Then $G$ is hamiltonian.
\end{lemma}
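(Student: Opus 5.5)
We use the $(\lambda,\mu)$-uniform representation of Section~\ref{sec_uniform_rep}. Since $\gcd(m,S,b)>1$ divides $\gcd(m,S)$, $b$ is not coprime to $\gcd(m,S)$. So $G$ has a $(\lambda,\mu)$-uniform representation with $\lambda=\gcd(m,S,b)-1\ge 1$ and $\mu=\gcd(m,S)/\gcd(m,S,b)-1\ge 0$. The components $H_{i,j}$ of $H(m;S)$ are arranged in $\lambda+1$ columns $K_j$ of $\mu+1$ rows each. Outer vertices of $H_{i,j}$ are joined by edges of type $a$ to those of $H_{i,j+1}$, and $K_\lambda$ wraps back to $K_0$ with a shift in the row index. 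Inner vertices of $H_{i,j}$ are joined by edges of type $b$ to those of $H_{i+1,j}$, cyclically in $i$. By hypothesis each $H_{i,j}$ has a hamilton cycle $C_{i,j}$, the translate of $C=P\cup u_0v_s$. The $u_x$-vertex of $C_{i,j}$ is adjacent via type $a$ to the $u_x$-vertex of $C_{i,j+1}$, and similarly for $v$-vertices along columns. This uniformity of labels is what lets local switches be repeated.

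The construction glues the cycles $C_{i,j}$ by a sequence of two-edge exchanges. Take an edge $v_0u_0$ in $C_{i,j}$ and the corresponding edge in $C_{i+1,j}$. Delete both and add the two type-$b$ edges $v^{i,j}_0v^{i+1,j}_0$ and $u$-side edges... Since $b$-edges join only inner vertices, a direct exchange uses two inner $b$-edges between consecutive inner vertices of the cycles, namely edges $v_0v_0$ and $v_kv_k$. The cycle orientation then has to be checked so that the union is a single cycle rather than two. In this way each column $K_j$ is merged into one cycle by zigzagging down rows $0,\dots,\mu$. Different columns use different exchange edges, so that later exchanges remain available: for example the segments around $u_z$, namely $v_h u_z v_k$, for the $b$-merges, and the edge $u_0v_s$ or $u_1v_0$ for the $a$-merges. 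Alternatively, we first build a snake through the grid: traverse row $0$ left to right via $a$-edges, go down via a $b$-edge, traverse row $1$ right to left, and so on. This is a boustrophedon Hamilton path on the $(\lambda+1)\times(\mu+1)$ grid of components. Each component is entered at one vertex and left at an adjacent-on-$C$ vertex of the opposite or correct side, using $C_{i,j}$ minus one edge as a hamilton path between the entry and exit points. The snake is then closed using the wrap-around $a$-edges from $K_\lambda$ to $K_0$ or the cyclic $b$-edges from row $\mu$ to row $0$.

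The case split is on parity. If $\mu+1$ is even, the boustrophedon over rows ends in column $0$ on the last row, and we close it with a $b$-edge from row $\mu$ back to row $0$ in column $0$. If $\mu+1$ is odd but $\lambda+1$ is even, we snake by columns instead and close with the wrap $a$-edges. If both are odd, we need a special gadget: a $2\times 2$ or $3$-row block is traversed differently, using the $u_z$ detour with vertices $v_h,u_z,v_k$ to reverse the parity of the exit side. The case $\mu=0$ is degenerate, with one row of components joined cyclically by $a$-edges and each component having its own $b$-edges internally in $K_j$. There the outer ring is handled directly by alternating entries $u_0$ and $u_1$.

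The main obstacle is the bookkeeping of entry and exit vertices. In a bipartite Haar component, a path that enters at an outer vertex via an $a$-edge and exits at an inner vertex via a $b$-edge must cover the component with endpoints of opposite colours. Deleting one edge of $C$ gives exactly such endpoints, for example $u_0$ and $v_s$. However, traversing a row through $a$-edges requires entering and leaving at outer vertices, which have the same colour. We will resolve this by entering at $u_0$ and leaving through a different component, alternating the use of $a$ and $b$ so that each component is entered by one type and left by the other. This is exactly why the grid snake must interleave horizontal and vertical moves in a staircase pattern. Verifying that such a staircase exists, and closes up for all $\lambda\ge1,\mu\ge0$ using the wrap-around shift, is the crux; the odd–odd parity case is where we expect the extra $u_z$-detour to be needed.
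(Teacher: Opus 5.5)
There is a genuine gap. You set up the $(\lambda,\mu)$-uniform representation correctly and you spot the bipartite obstruction, but you never produce the cycle. Worse, the model you commit to cannot work in general: each $H_{i,j}$ traversed in a single segment (namely $C_{i,j}$ minus one edge), entered by one edge type and left by the other.

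For $\mu\ge 1$ the only edges of $G$ with both ends in $H_{i,j}$ are spokes. An $a$-edge inside a component would force $a\equiv 0\pmod{\gcd(m,S)}$, which contradicts connectivity since $\gcd(m,S,b)>1$. A $b$-edge inside a component would force $b\equiv 0\pmod{\gcd(m,S)}$, i.e.\ $\mu=0$. So such a segment is a hamilton path of a balanced bipartite graph, with exactly one outer and one inner end. Hence the cycle leaves each component by exactly one $b$-edge. These $b$-edges would then form a perfect matching on the $\mu+1$ components of each column $K_j$, which are joined by $b$-edges in a cycle of length $\mu+1$. That is impossible whenever $\mu+1$ is odd, however the wrap-around edges are used. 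This happens, e.g., whenever $\gcd(m,S)$ is odd and $\mu\ge 1$; already $B(18;1,\{0,9\},3)$ has $\lambda=\mu=2$.

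So the ``crux'' you defer is false as stated, and your parity split on $\lambda+1,\mu+1$ targets the wrong obstruction: your case ``$\mu+1$ odd, $\lambda+1$ even'' is already impossible. The same count rules out your other sub-plans. A row boustrophedon needs segments with two outer ends, and a column snake needs two inner ends. An exchange using the $b$-edges $v_0v_0$, $v_kv_k$ would need $v_0v_k$ to be an edge of $C$, but $C$ alternates between outer and inner vertices.

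The missing idea is to cover a component in several pieces. Deleting an outer vertex from $C_{i,j}$ leaves a path with both ends inner: $v_sC_{i,j}v_0=C_{i,j}-u_0$ or $v_hC_{i,j}v_k=C_{i,j}-u_z$, and deleting both leaves two such paths. These can be threaded vertically through a column by $b$-edges. The deleted outer vertices, now isolated, are strung together horizontally along a row by the $a$-edges $u^{i,j}_xu^{i,j+1}_x$. Pieces with one outer and one inner end, such as $P$, $u_0Cv_0$ or $u_zCv_h$, are used only where the route switches from $a$-edges to $b$-edges.

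This is how the paper's cycle, recalled in the proof of Lemma~\ref{lem_extension_method1}, is assembled. It consists of two paths $P'$ and $P^*$ joined by $u^{0,0}_0u^{0,1}_0$ and $u^{0,\lambda-1}_\alpha u^{0,\lambda}_\alpha$, with $u_\alpha\in\{u_0,u_1\}$ fixed by the parity of $\lambda$. For $\mu>0$ it uses neither the wrap from $K_\lambda$ to $K_0$ nor the $b$-edges from row $\mu$ to row $0$ (Remark~\ref{rem_construction}), so there is no closing-up problem at all.

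Your case $\mu=0$ is likewise only asserted. There each $K_j$ must be crossed by a hamilton path between two outer vertices, which must use exactly one $b$-edge. The closure must also absorb the shift of the wrap from $K_\lambda$ to $K_0$. None of this is checked.
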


\begin{figure}[h]
\begin{center}
\includegraphics[width=12cm]{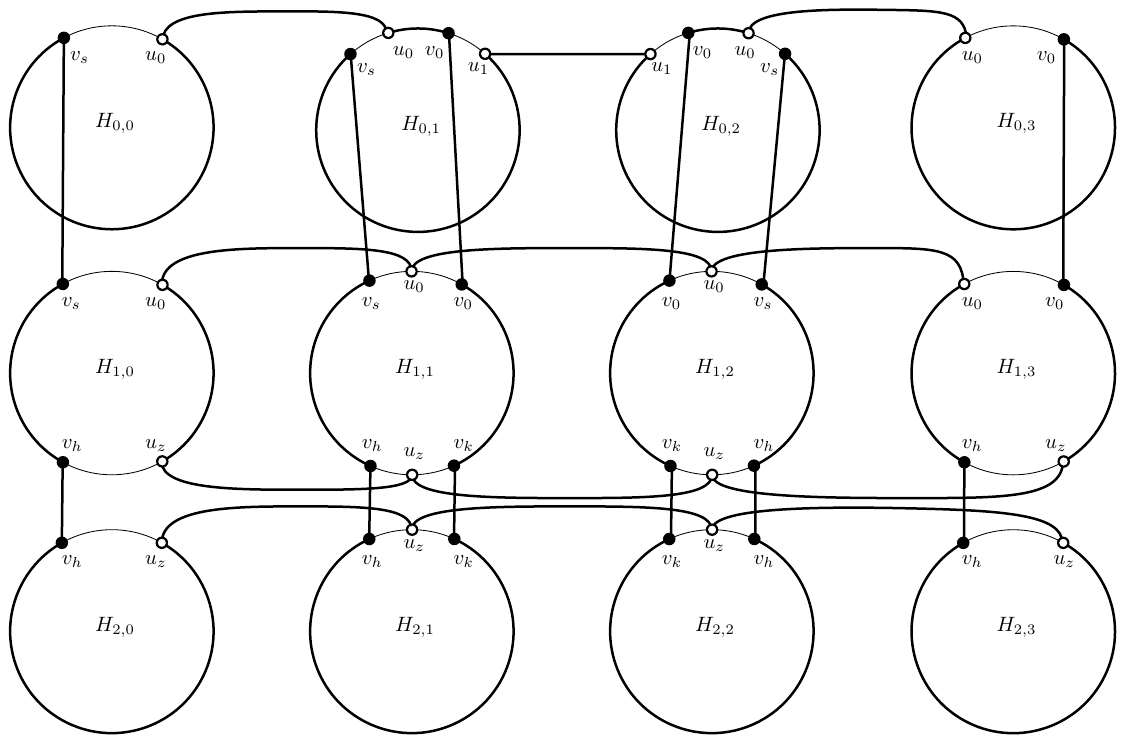}
\end{center}
\caption{The construction of a hamilton cycle we give in the proof of Lemma  \cite[Lemma 3.2 ]{BoPiZi2025_2ndpaper} for bicirculant graphs having a uniform representation. The graph in the figure has a $(\lambda, \mu)$-uniform representation with
$\lambda=3$ and $\mu=2$.}\label{fig_ham_cycle_lambda_mu_even_H(m,S)_hamiltonian}
\end{figure}

 The following remark on the construction described in the proof of \cite[Lemma 3.2]{BoPiZi2025_2ndpaper} is in order. It will help in the proof of Lemma \ref{lem_extension_method1}.

\begin{remark}\label{rem_construction}
Figure \ref{fig_ham_cycle_lambda_mu_even_H(m,S)_hamiltonian} shows that the construction described 
in the proof of \cite[Lemma 3.2]{BoPiZi2025_2ndpaper} for the case $\mu >0$ does not use the edges connecting the outer vertices in $K_0$ to the outer vertices in $K_{\lambda}$, nor the edges from the inner vertices in $H_{\mu, j}$ to the inner vertices in $H_{0, j}$ for every $0\le j\le\lambda$. 
This fact allows us to apply the construction in the proof of  \cite[Lemma 3.2]{BoPiZi2025_2ndpaper} to any
subgraph of $B(m; a, S, b)$ that is composed of a certain number of connected components of $H(m; S)$, say $(\lambda'+1)(\mu'+1)$ with $0<\lambda'\le\lambda$ and $0<\mu'\le\mu$, 
that can be arranged in a rectangular grid with $(\mu'+1)$ rows consisting of $(\lambda'+1)$ components of  $H(m; S)$ or, equivalently, in a rectangular grid with $(\lambda'+1)$ columns consisting of $(\mu'+1)$ components of  $H(m; S)$. More specifically, 
in a uniform representation of $B(m; a, S, b)$, we can select two positive integers, say $\lambda'$ and $\mu'$, and two non-negative indices, say $i'$ and $j'$,  such that $i'+\mu'\le\mu$, $j'+\lambda'\le\lambda$, and consider the components $H_{i, j}$ with $i'\le i\le i'+\mu'$ and $j'\le j\le j'+\lambda'$. 

The adjacencies among the selected components are inherited from the uniform representation of $G$ as follows: for $i'\le i\le i'+\mu'$, the outer vertices in $H_{i, j}$ are adjacent to  the outer vertices in $H_{i, j+1}$ for every $j'\le j\le j'+\lambda'-1$; for $j'\le j\le j'+\lambda'$, the inner vertices in $H_{i, j}$ are adjacent to  the inner vertices in $H_{i+1, j}$ for every $i'\le i\le i'+\mu'-1$. 

It does not matter whether the the inner vertices in $H_{i'+\mu', j}$ are adjacent to the inner vertices in $H_{i', j}$, or the
outer vertices in $K_{j'+\lambda'}$ are adjacent to the outer vertices in $K_{j'}$: 
the construction in the proof of \cite[Lemma 3.2]{BoPiZi2025_2ndpaper}  works the same.
For this reason, the construction in the proof of  \cite[Lemma 3.2]{BoPiZi2025_2ndpaper} also works in the case where the graph $B(m; a, S, b)$ has a $(\lambda,\mu,\rho)$-non-uniform representation and we consider $i'+\mu'<\mu$, $j'+\lambda'\le\lambda$ and $\mu'>0$. It provides a hamilton cycle in the selected subgraph of the graph $B(m; a, S, b)$.
\end{remark}


We are now ready to present Lemma \ref{lem_extension_method1}, which extends the construction we give in the proof of \cite[Lemma 3.2]{BoPiZi2025_2ndpaper}.
In the construction described in Lemma \ref{lem_extension_method1}, we will also consider the hamilton cycle $C=P\cup u_0v_s$ and the following paths: 
the hamilton path $u_0\,C\,v_0=C-u_0v_0$ 
that is obtained by removing the edge $u_0v_0$ from the cycle $C$; 
the hamilton path  $u_z\,C\,v_x=C-u_zv_x$, with $x \in \{h,k\}$, that is obtained by removing the edge $u_zv_x$ from the cycle $C$;
the path $v_s\,C\,v_0=C-u_0$ that is obtained by deleting the vertex $u_0$ from $C$;
the path $v_h\,C\,v_k=C-u_z$ that is obtained by deleting the vertex $u_z$ from $C$. 
We will denote with  $u_0\,C_{i,j}v_0$, $u_z\,C_{i,j}v_x$, $v_s\,C_{i,j}\,v_0$ and $v_h\,C_{i,j}\,v_k$  the 
copies of $u_0\,C\,v_0$, $u_z\,C\,v_x$, $v_s\,C\,v_0$ and $v_h\,C\,v_k$  that are contained in the component $H_{i, j}$ of $H(m; S)$.

Figures \ref{fig_extension_step1} and \ref{fig_extension_step2} show the construction described in the proof of Lemma \ref{lem_extension_method1} in the case where the bicirculant graph has a $(\lambda, \mu, \rho)$-non-uniform representation with 
$\lambda=3$, $\mu=3$ and $\rho=2$.

\begin{lemma}\label{lem_extension_method1}
Let $G=B(m; a, S, b)$ be a connected bicirculant graph with $|S|\ge 2$ that admits a $(\lambda,\mu,\rho)$-non-uniform representation  with $\lambda\ge 1$, $\mu> 1$ and $0\le\rho<\lambda$. 
Assume the connected components of $H(m; S)$ are hamiltonian. 
Then the following holds.
\begin{itemize}
\item[(i)] If $\rho=0$, then $G$ has a hamilton path from the vertex $u_0$ to the vertex $u_b$.
\item[(ii)]
If $\rho>0$, then $G$ is hamiltonian.  
\end{itemize}
\end{lemma}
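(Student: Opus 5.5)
We construct the hamilton path (resp.\ cycle) explicitly by splitting the non-uniform grid into a rectangular block and a leftover piece. In the $(\lambda,\mu,\rho)$-non-uniform representation, the components $H_{i,j}$ with $0\le i\le \mu-1$ and $0\le j\le\lambda$ form a rectangle of $\mu$ rows and $\lambda+1$ columns. The remaining components $H_{\mu,j}$, $0\le j\le\rho$, form a short bottom row attached below columns $0,\dots,\rho$.

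Since $\mu>1$, the rectangle has $\mu'+1=\mu\ge 2$ rows, and $i'+\mu'=\mu-1<\mu$. By Remark~\ref{rem_construction}, the construction of Lemma~\ref{lemma_construction_2nd_paper} applies to it and gives a hamilton cycle $Z$ of the subgraph spanned by the rectangle. This uses only type-$a$ edges between horizontally adjacent components, type-$b$ edges between vertically adjacent ones, and the paths $P_{i,j}$ and cycles $C_{i,j}$. The key step is to modify $Z$ locally near the bottom row of the rectangle, $i=\mu-1$, so that it absorbs the extra row.

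\textbf{Absorbing the extra row ($\rho>0$).} The extra row $H_{\mu,0},\dots,H_{\mu,\rho}$ is itself a horizontal chain of $\rho+1\ge 2$ hamiltonian components, linked consecutively by type-$a$ edges. It is joined to row $\mu-1$ by type-$b$ edges $v^{\mu-1,j}_x v^{\mu,j}_x$ for $0\le j\le\rho$.

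First I build a hamilton path of this chain using the paths $u_0\,C\,v_0$, $u_z\,C\,v_x$, $v_s\,C\,v_0$ and $v_h\,C\,v_k$. It enters $H_{\mu,0}$ at an inner vertex and leaves $H_{\mu,\rho}$ at an inner vertex, passing between consecutive components through the outer endpoints $u_0,u_z$ and type-$a$ edges. The parity of $\rho+1$ decides which combination of $C_{i,j}$-paths is needed; I treat $\rho$ even and odd separately, as in the uniform construction.

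Next I pick two inner edges of $Z$ inside $H_{\mu-1,0}$ and $H_{\mu-1,\rho}$, or a single one if the rerouting is done inside one column. These are edges whose endpoints are $v$-vertices with type-$b$ partners in row $\mu$. In the uniform construction these are edges like $v_0u_0$ or $u_zv_h$ on the copies of $C$, and the $b$-partners of their inner endpoints lie in the extra row. I replace them by detours $v^{\mu-1,\cdot}\to v^{\mu,\cdot}\to(\text{path through the extra row})\to v^{\mu,\cdot}\to v^{\mu-1,\cdot}$.

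The main obstacle is bookkeeping: the rerouted path must enter and leave the extra row at vertices whose $b$-neighbours in row $\mu-1$ are consecutive on $Z$, or can be made so by a local re-choice of which $C_{\mu-1,j}$-path is used. I would first inspect the form of $Z$ in column $0$ near row $\mu-1$ (Figure~\ref{fig_ham_cycle_lambda_mu_even_H(m,S)_hamiltonian}) and choose the rectangle construction so that column $0$ is traversed vertically. The detour can then be made through column $0$ together with a horizontal sweep along the extra row and back through the same row. Because the extra row has at least two components, the sweep can go out along $H_{\mu,0},\dots,H_{\mu,\rho}$ and return, using both ends of the copies of $C$. This is where $\rho>0$ is needed.

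\textbf{The case $\rho=0$.} Here the extra row is the single component $H_{\mu,0}$, and a closing detour is impossible. Instead I open $Z$: break it at $u_0$ in $H_{0,0}$, and instead of closing it, insert $H_{\mu,0}$ by entering it via a type-$b$ edge from $H_{\mu-1,0}$ and traversing it by a hamilton path $v\,C_{\mu,0}\,u$. The path then ends at an outer vertex of $H_{\mu,0}$, namely $u^{\mu,0}_0=u_{\mu b}$. I then check via the congruence $b\equiv ha\pmod{\gcd(m,S)}$ and Lemma~\ref{lem_nonuniform_representation} ($\gcd(m,S)=\mu h+1$) that this vertex is $u_b$ up to the relabelling allowed by the representation. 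If not, I would instead choose the endpoint in $H_{1,0}$, where $v_b$ lies, so the terminal vertex is $u^{1,0}_0=u_b$ and the path ends there.
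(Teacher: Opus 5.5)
\textbf{Part (ii).} You name the ``bookkeeping'' as the main obstacle but leave it open, and the concrete pieces you do commit to cannot exist. Each $H_{\mu,j}$ is a connected bipartite graph with equally many outer and inner vertices. Hence any family of disjoint paths covering it contains as many paths with two inner ends as paths with two outer ends.

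Consider first a hamilton path of the chain $H_{\mu,0},\dots,H_{\mu,\rho}$ that passes between components only along type-$a$ edges. It meets every middle component $H_{\mu,j}$, $1\le j\le\rho-1$, only in subpaths with two outer ends, so it does not exist once $\rho\ge 2$. An out-and-back sweep along row $\mu$, hooked to the rectangle only through column $0$, meets $H_{\mu,\rho}$ in the same way, so it fails already for $\rho=1$. For $\rho=1$ the chain path itself does exist, but its two inner ends lie in different columns. Splicing it into $Z$ then means deleting two edges of $Z$, adding an edge between their other ends and checking the orientation on $Z$; this is exactly the bookkeeping you left undone. A single-edge detour $v^{\mu-1,\cdot}\to v^{\mu,\cdot}\to\cdots\to v^{\mu,\cdot}\to v^{\mu-1,\cdot}$ is impossible as well: the edges of $Z$ inside $H_{\mu-1,j}$ are spokes, and no edge of $Z$ joins two inner vertices of row $\mu-1$.

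The missing idea is to hang \emph{every} extra component from row $\mu-1$ by type-$b$ edges. Each middle component is then covered by an inner-to-inner path plus one leftover outer vertex, which is collected horizontally. In the paper (case $\mu-1$ even) this goes as follows.
\begin{itemize}
\item $H_{\mu,j}$ with $1\le j\le\rho-1$ is covered by $v_s\,C_{\mu,j}\,v_0$ and the single vertex $u^{\mu,j}_0$.
\item The spokes $u^{\mu-1,j}_0v^{\mu-1,j}_s$ and $u^{\mu-1,j}_0v^{\mu-1,j}_0$ of $\hat C$ are replaced by the detour through $v^{\mu-1,j}_0v^{\mu,j}_0$, $v_s\,C_{\mu,j}\,v_0$ and $v^{\mu,j}_sv^{\mu-1,j}_s$.
\item The stranded vertices $u^{\mu-1,j}_0$ and $u^{\mu,j}_0$ are collected by the type-$a$ edges $u^{\mu-1,j}_0u^{\mu-1,j+1}_0$ and $u^{\mu,j}_0u^{\mu,j+1}_0$.
\item $H_{\mu,0}$ and $H_{\mu,\rho}$ are attached through $v^{\mu-1,0}_sv^{\mu,0}_s$ and $v^{\mu-1,\rho}_\beta v^{\mu,\rho}_\beta$, by $v_s\,P_{\mu,0}\,u_0$ and by $u_0\,C_{\mu,\rho}\,v_0$ or $P_{\mu,\rho}$ respectively.
\item The pieces are closed up with $u^{\mu-1,\rho-1}_0u^{\mu-1,\rho}_0$ and the two original joining edges of $\hat C$.
\end{itemize}

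\textbf{Part (i).} Here the endpoints are wrong. Opening $Z$ at $u^{0,0}_0=u_0$ leaves as the other end a $Z$-neighbour of $u_0$. That is an inner vertex of $H_{0,0}$ or an outer vertex, never an inner vertex of $H_{\mu-1,0}$ (as $\mu\ge 2$), so $H_{\mu,0}$ cannot be appended there.

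Even a hamilton path from $u_0$ to $u^{\mu,0}_0=u_{\mu b}$ would not be what is claimed. We have $u_{\mu b}\ne u_b$ unless $(\mu-1)b\equiv 0\pmod m$. The rotations of $G$ preserve the difference of subscripts, and that difference must be exactly $b$, which is what the $2$-hooked construction of Proposition~\ref{rem_2hooked2} uses. No congruence modulo $\gcd(m,S)$ repairs this.

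Your fallback of ending at $u^{1,0}_0=u_b$ leaves $H_{\mu,0}$ unplaced. In the representation it is joined to the rest only by $b$-edges at its inner vertices, so by the count above it must contain an end of the path.

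The fix is to open $Z$ inside $H_{\mu-1,0}$. Delete a spoke $u^{\mu-1,0}_xv^{\mu-1,0}_y$ of $Z$ coming from $C_{\mu-1,0}$ (the paper uses $u^{\mu-1,0}_0v^{\mu-1,0}_s$). Then append the copy of $C-u_xv_y$ in $H_{\mu,0}$ through the type-$b$ edge $v^{\mu-1,0}_yv^{\mu,0}_y$. The result is a hamilton path from $u_{x+(\mu-1)b}$ to $u_{x+\mu b}$, whose ends differ by exactly $b$, and a rotation of subscripts moves them to $u_0$ and $u_b$.
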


\begin{proof}
Given a $(\lambda,\mu,\rho)$-non-uniform representation of $G$ with $\mu> 1$, let $\hat G$ be the subgraph obtained from $G$ by removing the components $H_{\mu, j}$ for $0\le j\le\rho$.  We can say that $\hat G$ is the vertex-disjoint union of the vertical components $\hat K_j$, with $0\le j\le\lambda$, where $\hat K_j=K_j$ for $\rho+1\le j\le\lambda$,  while $\hat K_j=K_j-H_{\mu,j}$ for $0\le j\le\rho$.  Equivalently, according to Remark \ref{rem_construction}, $\hat G$ is obtained from 
$G$ by setting $\lambda'=\lambda$, $\mu'=\mu-1$ and $i'=j'=0$. By the same remark, we can apply the construction described in the proof of  \cite[Lemma 3.2]{BoPiZi2025_2ndpaper} and find a hamilton cycle, say $\hat C$, in $\hat G$. 
The cycle $\hat C$ covers all the vertices of $G$ with the exception of the vertices in $\cup^{\rho}_{j=0} H_{\mu, j}$.
In the following, we describe how to expand the cycle $\hat C$ into a hamilton cycle in $G$.

\begin{figure}
    \centering
    \includegraphics[width=0.9\linewidth]{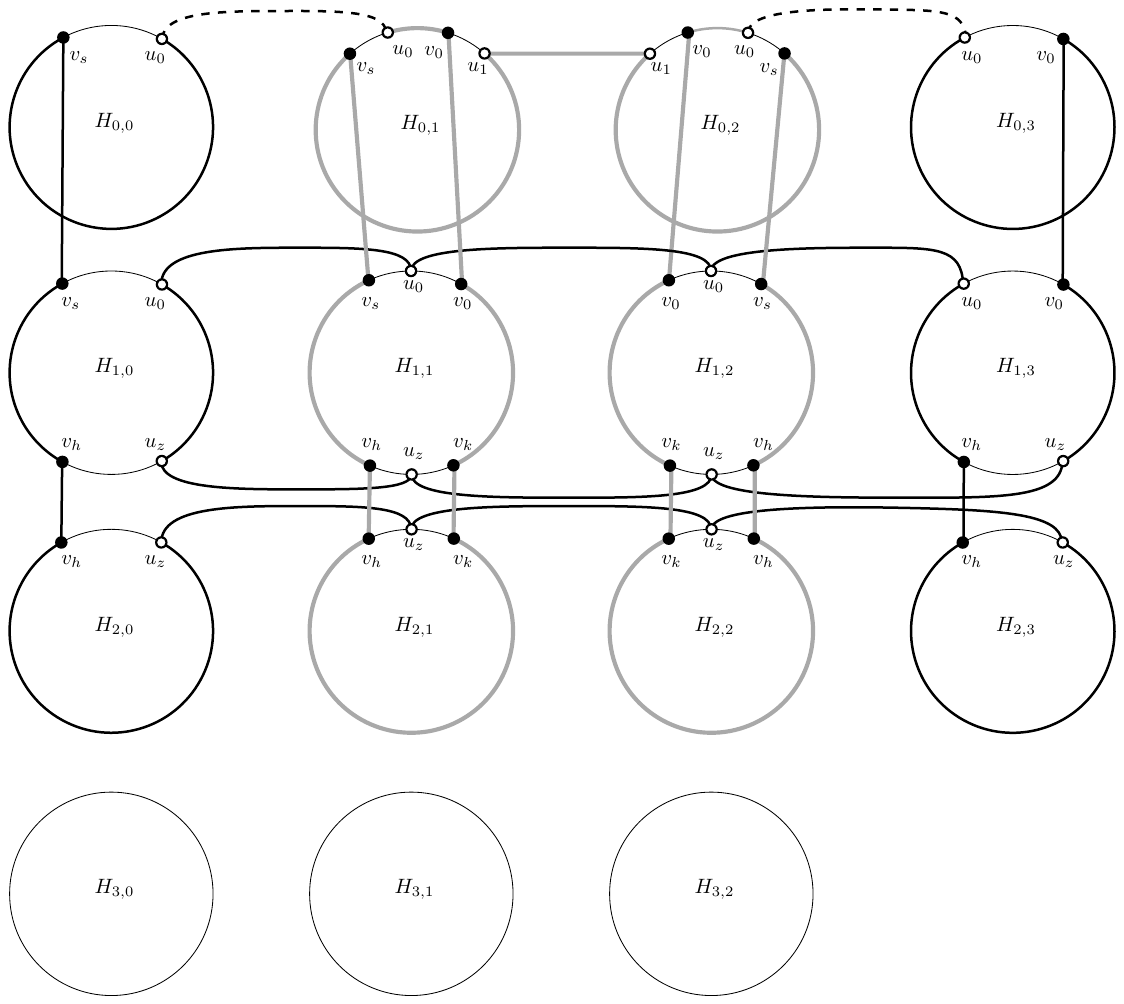}
    \caption{Step 1 in the proof of Lemma \ref{lem_extension_method1}. The figure shows a bicirculant graph that has a $(\lambda, \mu, \rho)$-non-uniform representation with $\lambda=\mu=3$ and $\rho=2$. The thick curves that are highlighted on the $H_{i,j}$ represent the subpaths of $C_{i,j}$ that are used in Lemma \ref{lem_extension_method1} to construct a hamilton cycle $\hat C$ in the subgraph $\hat G$ that is obtained by deleting the components $H_{\mu,j}$ with $0\le j\le\rho$ from $G$. The cycle $\hat C$ is the join of the paths $u^{0,0}_0\,P'\,u^{0,\lambda}_{0}$ and $u^{0,1}_{0}\,P^*\,u^{0,\lambda-1}_{0}$ through the edges $u^{0,0}_0u^{0,1}_0$ and $u^{0,\lambda-1}_{0}u^{0,\lambda}_{0}$. The path $u^{0,0}_0\,P'\,u^{0,\lambda}_{0}$ is represented by the thick edges in black, the path $u^{0,1}_{0}\,P^*\,u^{0,\lambda-1}_{0}$ is defined by the thick edges in gray, and the dashed black edges are  the edges $u^{0,0}_0u^{0,1}_0$ and $u^{0,\lambda-1}_{0}u^{0,\lambda}_{0}$.}
 \label{fig_extension_step1}
\end{figure}

As defined in the proof of \cite[Lemma 3.2] {BoPiZi2025_2ndpaper}, the cycle $\hat C$ is the join of the paths $u^{0,0}_0\,P'\,u^{0,\lambda}_{\alpha}$ and $u^{0,1}_{0}\,P^*\,u^{0,\lambda-1}_{\alpha}$ through the edges $u^{0,0}_0u^{0,1}_0$ and $u^{0,\lambda-1}_{\alpha}u^{0,\lambda}_{\alpha}$ (we recall that $u_{\alpha}=u_0$ if $\lambda$ is odd,  $u_{\alpha}=u_1$ if $\lambda$ is even, and for $\lambda=1$ we have that $\hat C$ consists of the the path $u^{0,0}_0\,P'\,u^{0,\lambda}_{\alpha}=u^{0,0}_0\,P'\,u^{0,1}_{0}$ together with the edge $u^{0,0}_0u^{0,1}_{0}$). 
In Figure \ref{fig_extension_step1}, the path $u^{0,0}_0\,P'\,u^{0,\lambda}_{\alpha}$, with $u_{\alpha}=u_0$, is given by the thick  edges in black, while the thick edges in gray give the path $u^{0,1}_{0}\,P^*\,u^{0,\lambda-1}_{\alpha}$, and the dashed black edges are  
the edges $u^{0,0}_0u^{0,1}_0$ and $u^{0,\lambda-1}_{\alpha}u^{0,\lambda}_{\alpha}$.

If $(\mu-1)$ is even, then the path $u^{0,0}_0\,P'\,u^{0,\lambda}_{\alpha}$ contains the path $u_z\,C_{\mu-1,0}\,v_h$.
For $\lambda>1$,  the path $u^{0,1}_{0}\,P^*\,u^{0,\lambda-1}_{\alpha}$ contains the path 
$v_h\,C_{\mu-1,j}\,v_k$ for every $1\le j\le\rho$.

If $(\mu-1)$ is odd, then the path $u^{0,0}_0\,P'\,u^{0,\lambda}_{\alpha}$ contains the path $v_s\,P_{\mu-1,0}\,u_0$.
For $\lambda>1$, the path $u^{0,1}_{0}\,P^*\,u^{0,\lambda-1}_{\alpha}$ contains the path $v_s\,P_{\mu-1,j}\,v_0$ for every $1\le j\le\rho$.

We will show the construction in the case where $\mu$ is odd and, consequently, $\mu-1$ is even. The construction in the case where $\mu-1$ is odd is analogous; we just need to replace the vertices $v_h$,  $u_z$,  $v_k$,  $v_s$,  $u_0$, $v_0$ with the vertices $v_s$,  $u_0$,  $v_0$,  $v_h$,  $u_z$, $v_k$, respectively.

In the following, we assume that $\mu-1$ is even and also consider $\rho>0$ (the case $\rho=0$ will be treated at the end of the proof). 
The reader can refer to Figure \ref{fig_extension_step2} for the construction that we are going to describe.
We consider the following paths spanning the vertices of $H_{\mu,j}$ with $0\le j\le\rho$: the copy of the path $P$ from $v_s$ to $u_0$ in $H_{\mu,0}$, the copy of $v_s\,C\,v_0$ together with the copy of the vertex $u_0$ in $H_{\mu, j}$ with $1\le j\le\rho-1$, and  in $H_{\mu,\rho}$ we take the copy of $u_0\,C\,v_0$ or the copy of $P$ according to whether $\rho$ is even or odd, respectively.
We set $v_{\beta}=v_0$ if $\rho$ is even, $v_{\beta}=v_s$ is $\rho$ is odd. We denote with $K'_{\rho}$ the subgraph of the vertical component $K_{\rho}$ consisting of the copies of the following paths: if $\rho$ is even, it consists of the copy of the path $v_s\,P\,u_1$ in $H_{0,\rho}$, the copy of the path $v_s\,P\,v_h$ in $H_{i,\rho}$ for $1\leq i\le\mu-2$, and the copy of the path $v_h\,P\,u_0$ in $H_{\mu-1,\rho}$; if $\rho$ is odd, it consists of the copy of the edge $u_0v_0$ in $H_{0,\rho}$, the copy of the path $v_k\,P\,v_0$ in $H_{i,\rho}$ for $1\leq i\le\mu-2$, and the copy of the path $v_k\,P\,u_0$ in $H_{\mu-1,\rho}$.

We modify the path $u^{0,1}_{0}\,P^*\,u^{0,\lambda-1}_{\alpha}$ to obtain a path $u^{0,1}_0\,P^{**}\,u^{\mu-1, \rho}_0$ connecting the vertices $u^{0,1}_0$ 
and $u^{\mu-1, \rho}_0$ as follows: in $\hat C$  we remove the edge 
$u^{\mu-1,\rho}_0v^{\mu-1,\rho}_{\beta}$,  and the edges $u^{\mu-1,j}_0v^{\mu-1,j}_s$,   $u^{\mu-1,j}_0v^{\mu-1,j}_0$ for every $1\le j\le\rho-1$, 
then we add the edges from the copies of $v_0$, $v_s$ in $H_{\mu-1, j}$ to the copies of $v_0$, $v_s$ in  
$v_s\,C_{\mu, j}\,v_0$ for $1\le j\le\rho-1$ (see the gray path in Figure \ref{fig_extension_step2}).

The path $u^{0,1}_0\,P^{**}\,u^{\mu-1, \rho}_0$ covers the vertices in the vertical component $K_j$, for $1\le j\le\rho-1$,
with the exception of the copies of the vertices $u_0$, $u_z$ in each $H_{i,j}$, with $1\le i\le\mu-1$ and $1\le j\le\rho-1$,
and  of the copy of $u_0$ in  $H_{\mu,j}$ with $1\le j\le\rho-1$. It also covers the vertices in  $K'_{\rho}$.
We denote with $U$ the set of vertices in $\cup^{\rho}_{j=1}K_j$ that are not covered by $u^{0,1}_0\,P^{**}\,u^{\mu-1, \rho}_0$.

Now, we replace the path $u^{0,0}_0\,P'\,u^{0,\lambda}_{\alpha}$ with three paths: the path 
$u^{0,0}_0\,P^{''}\,v^{\mu,\rho}_{\beta}$ from the vertex $u^{0,0}_0$ to the vertex $v^{\mu,\rho}_{\beta}$,
the path $u^{0,\lambda}_{\alpha}\,P^{'''}\,u^{\mu-1,\rho-1}_{0}$ from the vertex $u^{0,\lambda}_{\alpha}$
to the vertex $u^{\mu-1,\rho-1}_{0}$, and the path 
$u^{0,\lambda-1}_{\alpha}\,P^*\,v^{\mu-1,\rho}_{\beta}$
from the vertex $u^{0,\lambda-1}_{\alpha}$ to the vertex $v^{\mu-1,\rho}_{\beta}$. 
The first two paths are obtained from the path $u^{0,0}_0\,P'\,u^{0,\lambda}_{\alpha}$ as follows:
we remove the edge $u^{\mu-1,0}_0v^{\mu-1,0}_s$, then we add the edge from the copy of $v_s$ in $H_{\mu-1,0}$ to the copy of 
$v_s$ in $v_s\,P_{\mu, 0}\,u_0$, and also add the edges $u^{\mu-1,j}_0u^{\mu-1,j+1}_0$ with $0\le j\le\rho-2$ and 
$u^{\mu,j}_0u^{\mu,j+1}_0$ with $0\le j\le\rho-1$ (see the black paths in Figure \ref{fig_extension_step2}).
The path $u^{0,\lambda-1}_{\alpha}\,P^*\,v^{\mu-1,\rho}_{\beta}$ arises from $u^{0,1}_{0}\,P^*\,u^{0,\lambda-1}_{\alpha}$ after the removal of the edge $u^{\mu-1,\rho}_0v^{\mu-1,\rho}_{\beta}$ while constructing the path path $u^{0,1}_0\,P^{**}\,u^{\mu-1, \rho}_0$.
The union of the three paths covers the vertices in $U$ together with the vertices in the vertical components $K_0$ and $K_{\lambda}$.

We join the three above paths to the path $u^{0,1}_0\,P^{**}\,u^{\mu-1, \rho}_0$ through the following edges 
(see the dashed edges in Figure \ref{fig_extension_step2}): the edge
$u^{\mu-1,\rho-1}_0u^{\mu-1,\rho}_0$, the edge $v^{\mu-1,\rho}_{\beta}v^{\mu,\rho}_{\beta}$, and
the edges $u^{0,0}_0u^{0,1}_0$ and  $u^{0,\lambda-1}_{\alpha}u^{0,\lambda}_{\alpha}$,  which joined the paths 
$u^{0,0}_0\,P'\,u^{0,\lambda}_{\alpha}$ and $u^{0,1}_{0}\,P^*\,u^{0,\lambda-1}_{\alpha}$ in $\hat C$. 
We obtain a hamilton cycle in $G$, and the assertion follows in the case where $\rho>0$.

Finally, we consider the case $\rho=0$. In this case, the removal of the edge $u^{\mu-1,0}_0v^{\mu-1,0}_s$ makes $\hat C$ a path from $u^{\mu-1,0}_0$ to $v^{\mu-1,0}_s$, say $u^{\mu-1,0}_0\,\hat C\,v^{\mu-1,0}_s$, that covers all vertices of $G$ with the exception of the vertices in $H_{\mu, 0}$.  By joining 
the paths $u^{\mu-1,0}_0\,\hat C\,v^{\mu-1,0}_s$ and   $v_s\, P_{\mu, 0}\, u_0$ through the edge $v^{\mu-1,0}_sv^{\mu,0}_s$, we find a hamilton path
in $G$ from the vertex $u^{\mu-1,0}_0$ to the vertex $u^{\mu, 0}_0$, which yields a hamilton path from $u_0$ to $u_b$ since $u^{\mu-1,0}_0=u_{(\mu-1)b}$ and
$u^{\mu, 0}_0=u_{\mu b}$. Then the assertion follows for $\rho=0$ and, consequently, the assertion is true for $\lambda=1$ (for $\lambda=1$ we necessarily have $\rho=0$, since $0\le\rho<\lambda$).  

The above constructions are valid even in the case where some of the vertices that are used in the description of the paths are the same, i.e., 
if $v_0=v_k$ and $u_1=u_z$ and/or $v_s=v_h$ and $u_t=u_z$; this may happen when $|S|=2, 3$, while for $|S|\ge 4$ the vertices that are used in the description of the paths can be considered distinct.

\begin{figure}
    \centering
    \includegraphics[width=0.9\linewidth]{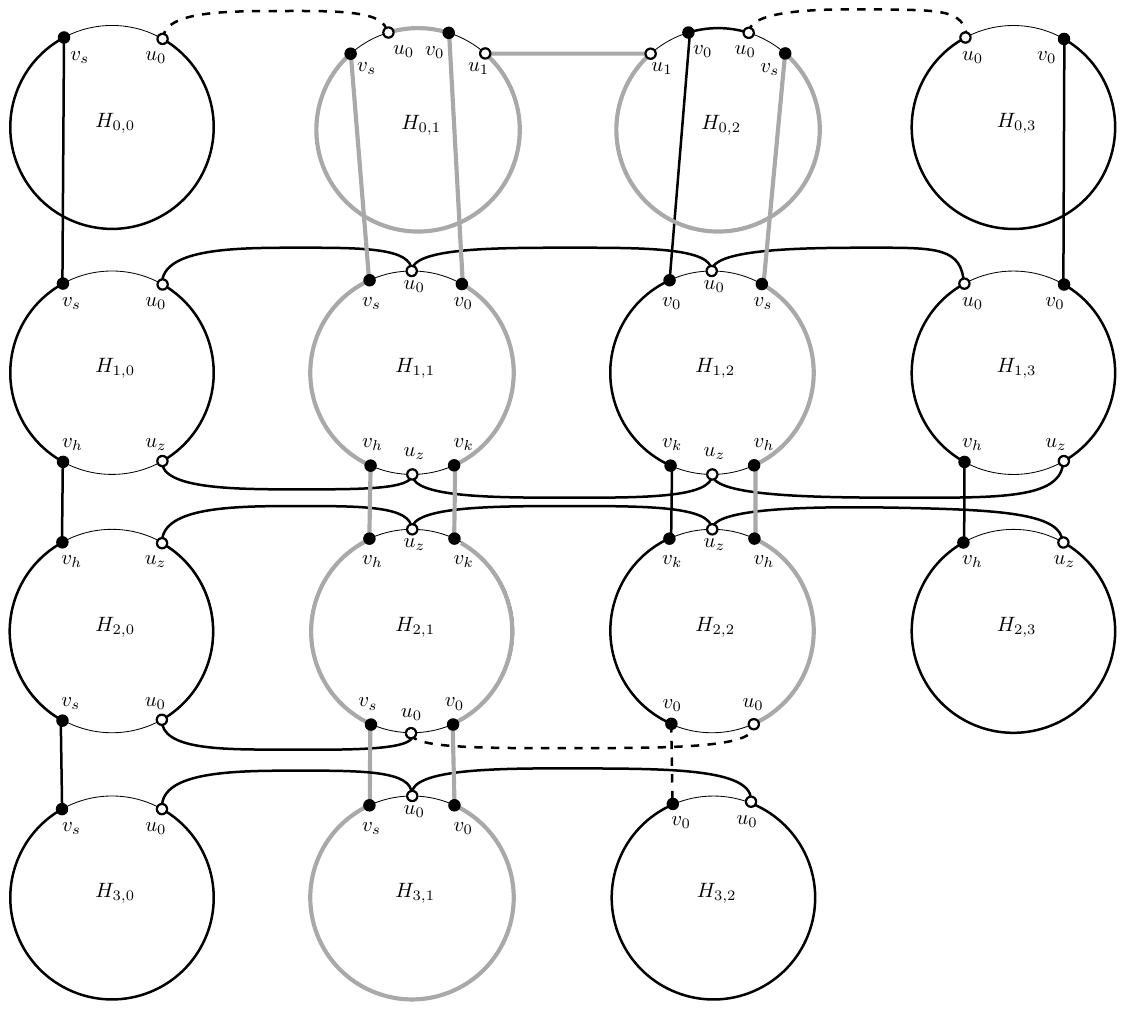}
    \caption{Step 2 in the proof of Lemma \ref{lem_extension_method1}. We extend the hamilton cycle $\hat C$ in $\hat G$ to hamilton cycle in $G$. First, we extend the path  $u^{0,1}_{0}\,P^*\,u^{0,\lambda-1}_{0}$ to a path $u^{0,1}_0\,P^{**}\,u^{\mu-1, \rho}_0$ connecting the vertices 
$u^{\mu-1,\rho}_0$ and $u^{\mu-1, \rho}_0$ (see the gray edges). Then, we replace the path $u^{0,0}_0\,P'\,u^{0,\lambda}_{0}$ with the paths
$u^{0,0}_0\,P^{''}\,v^{\mu,\rho}_{0}$,  $u^{0,\lambda}_{0}\,P^{'''}\,u^{\mu-1,\rho-1}_{0}$ and  $u^{0,\lambda-1}_{0}\,P^*
\,v^{\mu-1,\rho}_{0}$ (see the thick edges in black). Finally, we join the above paths through the following edges (see the dashed edges): 
$u^{\mu-1,\rho-1}_0u^{\mu-1,\rho}_0$,  $v^{\mu-1,\rho}_{0}v^{\mu,\rho}_{0}$, 
$u^{0,0}_0u^{0,1}_0$ and  $u^{0,\lambda-1}_{0}u^{0,\lambda}_{0}$. We obtain a hamilton cycle in $G$.}
    \label{fig_extension_step2}
\end{figure}

\end{proof}

\section{Further constructions}

In this section we study bicirculant graphs  of form $B(m; a, S, b)$ that have a $(\lambda, \mu, \rho)$-non-uniform representation with $\rho=0$  and $\mu >1$, or  their parameters satisfy the relation $b\equiv\pm a\pmod{\gcd(m, S)}$, that is, bicirculant graphs that are not covered by Lemma \ref{lem_extension_method1} (ii). 
The first main result of this section -- Lemma  \ref{lem_ab_coprime2}   --  deals with the bicirculants that  have a $(\lambda, \mu, \rho)$-non-uniform representation with $\rho=0$  and $\mu >1$. In order to prove Lemma  \ref{lem_ab_coprime2}, we need the preliminary Lemma  \ref{lem_rho=02}. We will also employ the $2$-hooked construction from \cite{BoPiZi2025} that, for the sake of completeness, we report in Proposition \ref{rem_2hooked2}.

\begin{lemma}\label{lem_rho=02}
Let  $G=B(m; a, S, b)$ be a connected bicirculant graph with $m>5$, $|S|\ge  3$, $\gcd(m,S)>1$, $a,b \ne m/2 $ both coprime to $\gcd(m, S)$ and  $b\not\equiv\pm a\pmod{\gcd(m,S)}$. 
Assume that $G$ has a $(\lambda, \mu,\rho)$-non-uniform representation  with $\rho< \lambda$.
Further to the above, assume that there exists a subset $S'$ of $S$  containing element $0$  such that $\delta=\gcd(m, a, S',b)>1$, and $\gcd(m,a, S'\cup\{c\},b)=1$ for some
$c\in S\smallsetminus S'$. 

Then every connected component of the bicirculant  $B(m; a, S', b)$ is  isomorphic to the  bicirculant $B(m; a/\delta, S'/\delta, b/\delta)$ where $a/\delta$, $b/\delta$ are coprime to $\gcd(m,S')/\delta$.
Moreover,  $\gcd(m,S')=\gcd(m,S)\delta$ and each connected component of   $B(m; a, S', b)$ has a $(\lambda', \mu',\rho')$-non-uniform representation with
$\lambda'=\lambda$, $\mu'=\mu$ and $\rho'=\rho$. %
\end{lemma}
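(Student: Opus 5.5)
The plan is to reduce everything to Proposition~\ref{pro:d_connected_components} and then compare the components of $H(m;S')$ with those of $H(m;S)$.

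\emph{Step 1: isomorphism type of the components.} Since $\delta=\gcd(m,a,S',b)>1$, Proposition~\ref{pro:d_connected_components} applied to $B(m;\{a,-a\},S',\{b,-b\})$ says that this graph splits into $\delta$ isomorphic connected components. Each is isomorphic to $B(m/\delta;a/\delta,S'/\delta,b/\delta)$. Set $g=\gcd(m,S)$ and $g'=\gcd(m,S')$. The spoke subgraph of a component is then $H(m/\delta;S'/\delta)$, and its gcd parameter is $\gcd(m/\delta,S'/\delta)=g'/\delta$. Coprimality of $a/\delta$ and $b/\delta$ to $g'/\delta$ will follow once Step~2 identifies $g'/\delta$ with $g$.

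\emph{Step 2: the identity $g'=g\delta$.} This is the arithmetic heart of the lemma, and I expect it to be the main obstacle. The inclusion $g\delta\mid g'$ is easy:
\begin{itemize}
\item $g\mid g'$ because $S'\subseteq S$;
\item $\delta\mid g'$ by definition of $\delta$;
\item $\gcd(g,\delta)=1$ because $\delta\mid a$ and $a$ is coprime to $g$.
\end{itemize}
For the reverse divisibility I would use two facts.
\begin{itemize}
\item Connectivity of $G$ and the hypothesis $\gcd(m,a,S'\cup\{c\},b)=1$ give $\gcd(\delta,c)=1$.
\item Since $g'$ divides $S'$ and $m$, we have $g=\gcd(m,S)=\gcd(g',S\smallsetminus S')$.
\end{itemize}
The task is then to show that every prime power dividing $g'$ but not $\delta$ divides the remaining spoke types, so that $g'/\delta\mid g$. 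I would argue this prime by prime. For a prime $p\mid g'$ with $p\nmid\delta$, the minimality built into the hypothesis should force $p$ to divide each element of $S\smallsetminus S'$: adding a single $c$ already collapses the gcd with $a,b$ to $1$, so that $c$ removes exactly the $\delta$-part. I would first try to make this precise by choosing $S'$ maximal with $\gcd(m,a,S',b)>1$.

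If that choice does not give the identity directly, the fallback is structural. Count components: $H(m;S')$ has $g'$ components and each component of $B(m;a,S',b)$ contains $g'/\delta$ of them. Adding the spokes of type $c$ must merge the $\delta$ components into one, and this merging should be read off from the non-uniform representation of Lemma~\ref{lem_nonuniform_representation}.

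\emph{Step 3: transport of the representation.} Once $g'/\delta=g$ is known, reduce modulo $g$. The congruence $b/\delta\equiv h'\,(a/\delta)\pmod g$ has the same solution as $b\equiv ha\pmod g$, because $\delta$ is invertible modulo $g$. Hence $h'=h$, $h'^*=h^*$, and $b/\delta\not\equiv\pm a/\delta\pmod g$. Lemma~\ref{lem_nonuniform_representation} then produces for each component a $(\lambda',\mu',\rho')$-non-uniform representation. Since $\lambda$, $\mu$, $\rho$ are determined solely by $g$ and $h^*$ through $g=\mu h^*+\rho+1$, this gives $\lambda'=\lambda$, $\mu'=\mu$ and $\rho'=\rho$.

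Finally, the remaining hypotheses of Lemma~\ref{lem_nonuniform_representation} need to be checked for the component.
\begin{itemize}
\item Its order parameter satisfies $m/\delta>5$, since $m/\delta$ is a multiple of $g\ge 5$; indeed $1<h<g-1$ forces $g\ge 4$, and $g\ne 4$ because $h$ is coprime to $g$.
\item $|S'|\ge2$.
\item $a/\delta,b/\delta\ne m/(2\delta)$.
\end{itemize}
These should be routine.
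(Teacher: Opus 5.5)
Your Step~2 is where the proposal breaks, and no argument can close it: the identity $\gcd(m,S')=\gcd(m,S)\,\delta$ does not follow from the hypotheses. Take $m=60$, $a=6$, $b=8$, $S=\{0,5,30\}$, $S'=\{0,30\}$, $c=5$. Then $G$ is connected and $\gcd(m,S)=5$. Also $a\equiv 1$ and $b\equiv 3\pmod 5$, so $h=3$, $h^*=2$, and $G$ has a $(1,2,0)$-non-uniform representation. Moreover $\delta=\gcd(60,6,30,8)=2$ and $\gcd(60,6,30,5,8)=1$, so every hypothesis holds.

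Yet $\gcd(m,S')=30\neq 10$. By Proposition~\ref{pro:d_connected_components}, the components of $B(60;6,\{0,30\},8)$ are copies of $B(30;3,\{0,15\},4)$. There $a/\delta=3$ is not even coprime to $\gcd(m,S')/\delta=15$, so these components have no non-uniform representation at all. With $a=2$, $b=4$ instead, coprimality survives, but the components $B(30;1,\{0,15\},2)$ get parameters $(1,7,0)$. In general, since $(\rho+1)(\mu+1)+(\lambda-\rho)\mu$ equals the relevant gcd, the triples must differ whenever $\gcd(m,S')/\delta\neq\gcd(m,S)$.

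Your prime-by-prime heuristic fails here for $p=3$, $c=5$. Choosing $S'$ maximal is not an option, since $S'$ is given, and it would not help, since $\{0,30\}$ is already maximal. The structural fallback gives no more than you already have. In $\mathbb{Z}_{g'}$, the components of $B(m;a,S',b)$ correspond to cosets of $\langle a,b\rangle=\langle\delta\rangle$, and those of $H(m;S)$ to cosets of $g\mathbb{Z}_{g'}$. Because $\langle a,b\rangle+g\mathbb{Z}_{g'}=\mathbb{Z}_{g'}$, one gets exactly $g'=g\,\delta\,|\langle a,b\rangle\cap g\mathbb{Z}_{g'}|$. So $g'=g\delta$ is equivalent to $\langle a,b\rangle\cap g\mathbb{Z}_{g'}=\{0\}$, which is an extra condition; in the example the intersection is $\langle 10\rangle$, of order $3$.

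The paper's proof follows essentially your fallback. It splits each $H_{i,j}$ into components $H_{i,j,k}$ of $H(m;S')$ and asserts that edges of type $a$ and $b$ preserve $k$ while edges of type $c$ shift it by one. From this it reads off $\delta=\gcd(m,S')/\gcd(m,S)$. Such a labelling exists only when $\langle a,b\rangle\cap g\mathbb{Z}_{g'}=\{0\}$: one only knows $b\equiv ha\pmod g$, not modulo $g'$, so a type-$b$ edge may change $k$. The paper's argument therefore presupposes the identity you could not prove. The example shows the lemma needs an additional hypothesis, such as $\gcd(m,S')=\gcd(m,S)\,\delta$.

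Under that hypothesis your Steps~1 and~3 are correct, and cleaner than the paper's argument. Since $\delta$ is invertible modulo $g$, you get $h'=h$, and Lemma~\ref{lem_nonuniform_representation} returns the same $(\lambda,\mu,\rho)$. One item you call routine is not: $|S'|\ge 2$ is not implied, since $S'=\{0\}$ is allowed. It is what gives $g'<m$, and hence the strict inequality $m/\delta\ge 2g>5$, so it has to be assumed as well.
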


\begin{proof}
Let us consider the partition $\mathcal H_0,\ldots,\mathcal H_{\mu}$  of the connected components of the graph $H(m;S)$ defined in the proof of Lemma \ref{lem_nonuniform_representation}, and the components $H_{i,j}$ of $H(m; S)$
belonging to $\mathcal H_i$, with $0\le i\le\mu$.

Each component $H_{i,j}$ splits into  $\gcd(m,S')/\gcd(m; S)$ components of $H(m; S')$, say $H_{i,j,k}$ with $0\le k\le \gcd(m,S')/\gcd(m; S)-1$.
By the arguments used in Lemma \ref{lem_nonuniform_representation},  the component $H_{i,j, k}$ is connected to the components $H_{i,j+1, k}$ and $H_{i+1,j, k}$ trough edges
of type $a$ and $b$,  respectively, and is also connected to the component $H_{i,j, k+1}$ trough edges of type $c$, since $\gcd(m,a, S'\cup\{c\},b)=1$.  
Notice that $H_{i,h-1, k}$ is connected to $H_{i+1,0, k}$ through edges of type $a$ -- see the remark in Lemma \ref{lem_nonuniform_representation} about $H_{i, h-1}$ and $H_{i+1, 0}$.
Consequently,  each $\mathcal H_i$ splits into  $\gcd(m,S')/\gcd(m; S)$ sets, say $\mathcal H_{i,k}=\{H_{i,j, k}: H_{i,j}\in\mathcal H_i\}$ with $0\le k\le (\gcd(m,S')/\gcd(m; S))-1$.
Because of the adjacencies between the vertices of the component $H_{i,j, k}$ with the vertices of the components $H_{i,j+1, k}$,  $H_{i+1,j, k}$ and  $H_{i,j, k+1}$, 
the sets $\mathcal H_{0,k}, \mathcal H_{1,k},\ldots, \mathcal H_{\mu,k}$ form a partition of the vertex set of the $k$-th component of $B(m; a, S', b)$, for every $0\le k\le (\gcd(m,S')/\gcd(m; S))-1$.
Then $\delta=\gcd(m,S')/\gcd(m; S)$, i.e.,   $\gcd(m,S')=\gcd(m,S)\delta$ since $B(m; a, S', b)$ has $\delta$ connected components. 
Moreover,  the sets $\mathcal H_{0,k}, \mathcal H_{1,k},\ldots, \mathcal H_{\mu,k}$ provide a $(\lambda', \mu',\rho')$-non-uniform representation of the 
$k$-th component of $B(m; a, S', b)$. 
In detail, $\lambda'=|\mathcal H_{i,k}|-1=|\mathcal H_i|-1=\lambda$ for some $i<\mu$, $\mu'=\mu$,  $\rho'=|\mathcal H_{\mu,k}|-1=|\mathcal H_{\mu}|-1=\rho$. 
All this also means that $a/\delta$, $b/\delta$ are coprime to $\gcd(m,S')/\delta$. The assertion follows.
\end{proof}

We describe in detail another construction that expands hamilton cycles or paths in the connected components of a subgraph to a hamilton cycle of the whole graph. The so-called $2$-hooked construction was defined in  \cite{BoPiZi2025} and it was used to construct hamilton cycles in the generalized rose window graphs.

\begin{proposition}[\protect{\cite[The $2$-hooked construction]{BoPiZi2025}}]\label{rem_2hooked2}

Let $B(m,  a, S'\cup\{c\}, b)$ be a connected bicirculant graph such that $\gcd(m, S'\cup\{c\} )>1$ and $\gcd(m, a, S', b)>1$.

Assume that each connected component of $B(m; a, S', b)$ has a hamilton cycle containing outer and inner edges,
and that it also has a hamilton path from the copy of $u_0$ to the copy of $u_b$ 
or a hamilton path from the copy of $v_0$ to the copy of $v_a$. 
Then the graph $B(m,  a, S'\cup\{c\}, b)$ is hamiltonian.

\end{proposition}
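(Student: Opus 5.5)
The plan is to build the hamilton cycle of $G=B(m;a,S'\cup\{c\},b)$ from the cycles and paths assumed in the components of $G'=B(m;a,S',b)$, joining consecutive components by pairs of spokes of type $c$ (the ``hooks''). The first step is the bookkeeping.

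\textbf{Arranging the components in a cycle.} Put $d=\gcd(m,a,S',b)>1$. By Proposition~\ref{pro:d_connected_components}, $G'$ is a disjoint union of $d$ isomorphic components, and $u_i,v_i$ lie in the component indexed by $i \bmod d$. A spoke $u_iv_{i+c}$ of type $c$ joins component $i$ to component $i+c$. Connectivity of $G$ gives $\gcd(d,c)=1$ by Proposition~\ref{prop:connected}. Hence the components can be listed cyclically as $L_0,\dots,L_{d-1}$, with $L_j\ni u_{jc}$, so that type-$c$ spokes go from outer vertices of $L_j$ to inner vertices of $L_{j+1}$, indices mod $d$.

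We also use the symmetry $\alpha^d$, which fixes every $L_j$ and acts on it as the rotation of the bicirculant $B(m/d;a/d,S'/d,b/d)$. Via $\alpha^{d}$ and $\alpha^{c}$, any prescribed hamilton cycle or path in one component can be transported to any other component. It can also be translated so that a chosen inner edge of type $b$, or outer edge of type $a$, sits at any prescribed position.

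\textbf{The basic hook.} Suppose $L_j$ has a hamilton path $Q_j$ from (the copy of) $u_x$ to $u_{x+b}$. The spokes $u_xv_{x+c}$ and $u_{x+b}v_{x+b+c}$ land on the ends of the inner edge $v_{x+c}v_{x+b+c}$ of type $b$ in $L_{j+1}$. If a hamilton cycle of $L_{j+1}$ has been translated so that it contains this edge, we delete the edge and insert $v_{x+c}u_x\,Q_j\,u_{x+b}v_{x+b+c}$. The result is a cycle through $L_j\cup L_{j+1}$.

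Dually, suppose $L_j$ has a hamilton path from $v_y$ to $v_{y+a}$. The spokes $u_{y-c}v_y$ and $u_{y+a-c}v_{y+a}$ land on an outer edge of type $a$ in $L_{j-1}$. So this path is inserted into an outer edge of a cycle in $L_{j-1}$ in the same way. This is exactly why the hypothesis asks for cycles containing both outer and inner edges.

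\textbf{Iterating.} I would start with the hamilton cycle of $L_{d-1}$ and absorb $L_{d-2},L_{d-3},\dots,L_0$ one at a time. In the $u_0$--$u_b$ case each new component enters through an inner edge of the previously absorbed component; in the $v_0$--$v_a$ case the absorption runs in the other direction through outer edges. Each insertion is repeated $d-1$ times, so all vertices are covered and the result is a hamilton cycle of $G$. The spokes of type $c$ are the only new edges used, and the spokes used at different steps are distinct because they join different pairs of components.

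\textbf{Main obstacle.} After $L_j$ has been inserted as a path, the next hook needs an inner edge of type $b$ lying inside the inserted copy of $Q_j$ (or, dually, an outer edge inside the $v$-path), placed so that two type-$c$ spokes reach it from $L_{j-1}$. The hypothesis only guarantees such an edge in the cycles, not in the paths. I would first try to handle this by choosing the translate of $Q_j$, via $\alpha^{d}$, so that one of its type-$b$ edges, away from its endpoints, is in the required position. Failing that, I would replace the path-insertion at intermediate steps by a four-spoke hook between two cycles. This removes an outer edge from the cycle of $L_j$ and a parallel outer edge from the cycle of $L_{j+1}$ (both of type $a$, translated so that the corresponding spokes match up), and it keeps the genuine path-insertion only for the final closing step. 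If neither works directly, I would settle for $d\le 3$ via the two-sided absorption into a single base cycle. The general case would then need the four-spoke variant, with a parity check on the orientation of the two removed edges, and I expect that check to be where the real work lies.
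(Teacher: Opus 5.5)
Your iterated basic hook is the right construction. However, you leave unresolved the one step that makes it work for $d\ge 3$, and the missing fact is a one-line count that the paper uses.

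A component $L$ of $B(m;a,S',b)$ has as many outer as inner vertices, say $n$ of each. Let $Q$ be a hamilton path of $L$ with both ends outer, and let $e_{uu},e_{vv},e_{uv}$ count the outer edges, inner edges and spokes on $Q$. Summing path-degrees over the outer and over the inner vertices gives $2e_{uu}+e_{uv}=2n-2$ and $2e_{vv}+e_{uv}=2n$, so $e_{vv}=e_{uu}+1\ge 1$. All inner edges have type $\pm b$, so $Q$ contains an edge $v_yv_{y+b}$. Dually, a $v$--$v_a$ path contains an outer edge $u_zu_{z+a}$. So the hypothesis does put the needed edge inside every inserted path, not only inside the cycles.

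No placement argument is needed either. If $v_yv_{y+b}$ lies on the inserted copy of $Q_j$, then $y\equiv jc\pmod d$, and the spokes $u_{y-c}v_y$ and $u_{y-c+b}v_{y+b}$ automatically come from $L_{j-1}$. The translate by $y-c$ of the $u_0$--$u_b$ path of $L_0$ is then a hamilton path of $L_{j-1}$ from $u_{y-c}$ to $u_{y-c+b}$. It is the \emph{incoming} path you translate, not $Q_j$. The enlarged cycle contains this whole translate, which again has an inner edge, so the induction runs down to $L_0$.

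With this observation your argument is complete and is essentially the paper's proof. Unrolled, your cycle consists of:
\begin{itemize}
\item the cycle of $L_{d-1}$ opened at a $b$-edge (the paper's $v_0\,P\,v_b$ in $H'_\gamma$);
\item each intermediate $u$--$u_b$ path split at one inner edge into two strands;
\item a full $u$--$u_b$ path in $L_0$.
\end{itemize}
The paper writes this two-strand ladder down in one go, aligning all pieces with the shifts by $ix$.

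Your fallbacks should be discarded, since they do not work as sketched. For $d\ge 3$ the only edges between $L_j$ and $L_{j+1}$ are type-$c$ spokes from outer vertices of $L_j$ to inner vertices of $L_{j+1}$. So the ends $u_z,u_{z+a}$ of a removed outer edge of $L_j$ are joined to $v_{z+c},v_{z+a+c}$, never to the ends of a removed outer edge of $L_{j+1}$. Likewise, absorbing components on both sides of a single base cycle needs both a $u$--$u_b$ path and a $v$--$v_a$ path. The hypothesis provides only one of the two, so your claimed case $d\le 3$ is not justified.
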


\begin{proof}
We set $G=B(m,  a, S'\cup\{c\}, b)$ and $G'=B(m; a, S', b)$. The following notation will be used to describe the $2$-hooked construction. The connected components of $G'$ will be denoted with $H'_0,\ldots, H'_{\gamma}$, where $\gamma=\gcd(m, a, S',b)-1$; 
$H'_0$ is the component containing $u_0$. For $1\leq i\leq \gamma$, the copies of the vertices $u_x$, $v_x$ of $H'_0$ that are contained in $H'_i$ are $u_{x+ci}$, $v_{x+ci}$ and they will be  be denoted with $u^i_x$, $v^i_x$; the vertices of $H'_0$ will be also denoted with $u^0_x$, $v^0_x$.  In this notation, the outer vertices of $H'_i$ are adjacent to the inner vertices of $H'_{i+1}$ through the spokes of type $c$ -- we recall that the graph $G$ 
is connected. This means that we can connect the copy of $u_x$ in $H'_i$ to the copy of $v_x$ in $H'_{i+1}$
using spokes of type $c$.

We  describe the $2$-hooked construction in the case where $H'_0$ has a hamilton path $\hat P$ with origin and terminus in $u_0$ and $u_b$; consequently, each graph $H'_i$ contains the copy of the path $\hat P$. The hamilton path $\hat P$ in $H'_0$ has at least one inner edge, say $v_{-x}\,v_{-x+b}$, since it has the same number of outer and inner vertices. 
Without loss of generality  we can assume that $v_{-x}$ precedes $v_{-x+b}$ in  $\hat P$, so that the removal of the edge $v_{-x}\,v_{-x+b}$ yields the two subpaths
$u_0\,\hat P\,v_{-x}$ and $u_b\,\hat P\,v_{-x+b}$.  We can also find a hamilton path $v_0\,P\,v_b$ from $v_0$ to $v_b$ in $H'_0$, since we are assuming that $H'_0$
has a hamilton cycle, say $\hat C$, containing outer and inner edges.  A hamilton path $v_0\,P\,v_b$ in $H'_0$ can be obtained by removing an arbitrary edge of type $b$,  say  $v_yv_{y+b}$, from $\hat C$,  and by adding $m-y$ modulo $m$ to the subscripts of the vertices of $G$.

We now show the $2$-hooked construction. If $G'$ has two components,  namely $H'_0$ and $H'_1$, then  find a hamilton cycle in $G$ by connecting 
the hamilton path $u_0\,\hat P\,u_b$ in $H'_0$ to the copy of the hamilton path $v_0\,P\,v_b$ in $H'_1$ through the edges $u^0_0\,v^1_0$ and $u^0_b\,v^1_{b}$.

Now consider the case where $G'$ has more than two components,  i.e., $\gamma >1$. In $H'_i$ with $1\leq i\leq \gamma-1$,  we consider the subpaths  $u^i_0\,\hat P\,v^i_{-x}$ and $u^i_b\,\hat P\,v^i_{-x+b}$  corresponding to the subpaths $u_0\,\hat P\,v_{-x}$ and $u_b\,\hat P\,v_{-x+b}$ of $H'_0$.  We turn the subpaths $u^i_0\,\hat P\,v^i_{-x}$ and $u^i_b\,\hat P\,v^i_{-x+b}$ into the subpaths $u^i_{ix}\,\hat P\,v^i_{(i-1)\,x}$ and $u^i_{b+ix}\,\hat P\,  v^i_{b+(i-1)\,x}$ by adding $ix$ modulo $m$ to the subscripts of the vertices in $H'_i$.

For $1\leq i\leq \gamma-2$,  we join the path $u^i_{ix}\,\hat P\,v^i_{(i-1)\,x}$  in $H'_i$  to the path $v^{i+1}_{i\,x}\,\hat P\,u^{i+1}_{(i+1)x}$ in $H'_{i+1}$  by the spoke $u^i_{i\,x}\,v^{i+1}_{i\,x}$, and
also join the path $u^i_{b+ix}\,\hat P\,v^i_{b+(i-1)\,x}$ in $H'_i$ to the path $v^{i+1}_{b+i\,x}\,\hat P\,u^{i+1}_{b+(i+1)x}$ in $H'_{i+1}$ by the spoke $u^i_{b+i\,x}\,v^{i+1}_{b+i\,x}$.
We obtain two vertex disjoint paths -- the former from $v^1_0$ to $u^{\gamma-1}_{(\gamma-1)x}$ and the latter from $v^1_{b}$ to $u^{\gamma-1}_{b+(\gamma-1)x}$ -- 
whose union cover all the vertices in $G-(H'_0\cup H'_{\gamma})$.  We connect the two paths to the hamilton paths $u^0_0\,P\,u^0_b$ in $H'_0$ and  
$v^{\gamma}_{(\gamma-1)x}\,P\,v^{\gamma}_{b+(\gamma-1)x}$ in $H'_{\gamma}$ by adding the spokes $u^0_0\,v^1_0$, $u^0_b\,v^1_{b}$ and 
$u^{\gamma-1}_{(\gamma-1)x}\,v^{\gamma}_{(\gamma-1)x}$, $u^{\gamma-1}_{b+(\gamma-1)x}\,v^{\gamma}_{b+(\gamma-1)x}$. 
This gives a hamilton cycle in the graph $G$.
\end{proof}

Now we are ready to prove Lemma \ref{lem_ab_coprime2} that will be crucial in the proofs of  our main results, which are 
established in Section \ref{sec:main}.

\begin{lemma}\label{lem_ab_coprime2}
Let $G=B(m; a, S,b)$ be a  bicirculant graph  with $m>5$,  $|S|\ge 3$,  $\gcd(m,S)>1$, $a,b \ne m/2$ both   coprime to $\gcd(m,S)$ and  $b\not\equiv\pm a\pmod{\gcd(m,S)}$.
Assume that $G$ has a $(\lambda, \mu,\rho)$-non-uniform representation with $\rho=0$ and $\mu>1$. 

Further to above, assume that  for some $c\in S \smallsetminus\{0\}$ the connected components of both   subgraphs  $H(m; S\smallsetminus\{c\})$ and $B(m; a, S\smallsetminus\{c\}, b)$ are hamiltonian. Then the graph $G$ is hamiltonian.
\end{lemma}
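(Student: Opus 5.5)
Set $S'=S\smallsetminus\{c\}$, with $c\in S\smallsetminus\{0\}$ as in the hypothesis, so $0\in S'$. The plan splits into two cases according to whether $G'=B(m;a,S',b)$ is connected.

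\textbf{Case 1: $\gcd(m,a,S',b)=1$.} Then $G'$ is a connected spanning subgraph of $G$.
\begin{itemize}
\item[(a)] If $\gcd(m,S')=\gcd(m,S)$, then $G'$ has the same non-uniform structure as $G$. The components of $B(m;a,S',b)$ are hamiltonian by hypothesis, so $G'$ itself is hamiltonian, and so is $G$.
\item[(b)] If $\gcd(m,S')\neq\gcd(m,S)$, the components of $H(m;S')$ may still be grouped so that $G'$ is hamiltonian; but we need no analysis here. The hypothesis gives directly that the connected components of $B(m;a,S',b)$, here $G'$ itself, are hamiltonian, and $G'$ spans $G$.
\end{itemize}
So Case 1 is immediate.

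\textbf{Case 2: $\delta=\gcd(m,a,S',b)>1$.} Since $G$ is connected, $\gcd(m,a,S'\cup\{c\},b)=1$. Lemma~\ref{lem_rho=02} applies and gives:
\begin{itemize}
\item each component of $G'$ is isomorphic to $B(m/\delta;a/\delta,S'/\delta,b/\delta)$;
\item $a/\delta$ and $b/\delta$ are coprime to $\gcd(m,S')/\delta=\gcd(m,S)$;
\item each component has a $(\lambda,\mu,0)$-non-uniform representation with $\mu>1$.
\end{itemize}
The components of $H(m;S')$ are hamiltonian by hypothesis. Lemma~\ref{lem_extension_method1}(i) therefore yields, in each component of $G'$, a hamilton path from the copy of $u_0$ to the copy of $u_b$. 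After rescaling by $\delta$, this is exactly the path that Proposition~\ref{rem_2hooked2} requires.

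Proposition~\ref{rem_2hooked2} also requires a hamilton cycle in each component of $G'$ that uses both outer and inner edges. Components of $G'$ are hamiltonian by hypothesis. Suppose some hamilton cycle avoids all outer edges, or all inner edges. A hamilton cycle in a graph with equally many outer and inner vertices that uses no inner edges must alternate spokes with outer edges. This forces either a degenerate situation or the alternative of no outer edges. Rather than rule this out abstractly, I would build the cycle explicitly: take the hamilton path $u_0\to u_b$ from Lemma~\ref{lem_extension_method1}(i), or the cycle produced by the construction of Lemma~\ref{lemma_construction_2nd_paper}. Both visibly use edges of type $a$ (the edges $u^{i,j}_0u^{i,j+1}_0$) and of type $b$ (the edges $v^{i,j}v^{i+1,j}$, present since $\mu>1$).

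I would close the path into a cycle as follows:
\begin{itemize}
\item Use the hypothesis' hamilton cycle of the component directly, if it contains both edge types.
\item Otherwise, modify the path from Lemma~\ref{lem_extension_method1}(i) through $H_{\mu,0}$, which contains both types, to close it up. In that construction, $u^{\mu,0}_0=u_{\mu b}$ is joined by an edge of type $a$ only along row $\mu$, which has length $\rho+1=1$; so closing requires a small rerouting.
\end{itemize}

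Finally, check the remaining hypotheses of Proposition~\ref{rem_2hooked2}: $\gcd(m,S)>1$ by assumption, and $\gcd(m,a,S',b)=\delta>1$. The $2$-hooked construction then gives a hamilton cycle in $G$.

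\textbf{Main obstacle.} The delicate step is verifying that a hamilton cycle with both outer and inner edges exists in each component of $G'$. The hypothesis only states that the components are hamiltonian, without controlling edge types. I would first argue that the hamilton cycles arising from the paper's constructions always contain type-$a$ and type-$b$ edges. Failing that, I would use the parity argument: a hamilton cycle on a bicirculant component that uses only spokes together with one kind of non-spoke edge cannot cover both sides evenly unless it uses no non-spoke edges at all. In that case the cycle lies in $H(m;S')$, which is disconnected into at least $\gcd(m,S)\ge 2$ components, a contradiction.
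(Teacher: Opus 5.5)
Your proposal is correct and follows the paper's proof. The case where $B(m;a,S\smallsetminus\{c\},b)$ is connected is immediate, and when it is disconnected you use Lemma~\ref{lem_rho=02}, then Lemma~\ref{lem_extension_method1}(i), then the $2$-hooked construction of Proposition~\ref{rem_2hooked2}. Your fallback degree count is exactly the justification behind the paper's one-line claim that every hamilton cycle of a component of $G'$ contains both inner and outer edges: a hamilton cycle has equally many outer and inner edges, and it cannot consist of spokes alone because each component of $G'$ contains $\gcd(m,S)\ge 2$ components of $H(m;S')$. So the explicit ``rerouting'' detour is unnecessary.
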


\begin{proof}
By the assumptions, the graph $G$ has a subgraph $G'=B(m; a, S\smallsetminus\{c\}, b)$ whose connected components, possibly one, are hamiltonian. If $G'$ is connected, then the assertion is straightforward. 
Let us consider the case where $G'$ is disconnected. Notice that $H(m; S\smallsetminus\{c\})$ is disconnected since $\gcd(m, S)>1$. We set $S'=S\smallsetminus\{c\}$.

By Proposition \ref{pro:d_connected_components}, every component of $G'$  is isomorphic to the bicirculant graph $B(m;a/\delta,S'/\delta,b/\delta)$, where $\delta=\gcd(m,a,S',b)$. Moreover,  by Lemma \ref{lem_rho=02}, every component of $G'$ has a $(\lambda,\mu,\rho)$-non-uniform representation with $\rho=0$,  
and $\mu>1$  and the parameters
$a/\delta$, $b/\delta$ are coprime to $\gcd(m,S')/\delta$.  
Since $\rho=0$ and the assumptions in Lemma \ref{lem_extension_method1} are satisfied, each component of $G'$ has a hamilton path from the copy of $u_0$ to the copy of $u_b$.  
Since each component of $G’$ contains more than one component of $H(m; S’)$, every hamilton cycle in the components of $G’$ necessarily contains inner and outer edges.
Therefore, we can apply the $2$-hooked construction from Proposition \ref{rem_2hooked2} 
and find a hamilton cycle in $G$. The assertion follows.
 \end{proof}

We report the definition of a brick product and a result on hamiltonicity related to brick products from \cite{AlZh1989}, which we will use in the 
proof of Lemma \ref{lem_ab_coprime3}, which is the second main result of this section.

\begin{definition}\label{def_brick_product2}
The brick product of $C_n$ with $P_k$,  $k\ge 1$,  is denoted $C^{[k+1]}_n$ and is
defined as follows.  Let $V (C_n) =\{w_1, w_2,..., w_n\}$,  $E(C_n) =\{w_1w_2,..., w_iw_{i+1},..., w_1w_n\}$, 
$V (P_k) = \{z_1, z_2,..., z_{k+1}\}$,  and $E(P_k) = \{z_1z_2,\ldots, z_iz_{i+1},\dots, z_kz_{k+1}\}$. 
The vertex-set of $C^{[k+1]}_n$ is the cartesian product $V(C_n)\times V(P_k)$. 
The edge-set consists of all pairs of the form $(w_i, z_t)(w_{i+1}, z_t)$ and $(w_1, z_t) (w_n, z_t)$, where
$i = 1,2,\ldots, n-1$ and $t = 1,2,\ldots, k+1$,  together with all pairs of the form $(w_i, z_t) (w_i, z_{t+1})$, where $i+t\equiv 0\pmod 2$,  $i = 1 , 2 , \ldots, n$ 
and $t =1,2,\ldots, k$. 
\end{definition}

\begin{lemma}[\protect{\cite[Corollary 2.3]{AlZh1989}}] \label{lem_brick2}
Consider the brick product $C^{[k]}_n$ with $n$ even.  Let $C_{n,1}$ and $C_{n,k}$ denote the two $n$-cycles in $C^{[k]}_n$ on the vertex-sets $\{(w_i, z_1) : i = 1,2,\ldots,n\}$ and $\{(w_i, z_k) : i = 1,2,\ldots,n\}$, respectively.  Let $F$ denote an arbitrary perfect matching joining the vertices of degree $2$ in $C_{n,1}$ with the vertices of degree $2$ in $C_{n,k}$.  If $X$ is the graph obtained by adding the edges of $F$ to $C^{[k]}_n$, then $X$ has a hamilton cycle.
\end{lemma}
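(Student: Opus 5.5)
The plan is to reduce the statement to a laceability property of the brick product itself: exhibit one edge $xy\in F$ together with a hamilton path of $C^{[k]}_n$ from $x$ to $y$, and close it with $xy$. The remaining edges of $F$ are then simply not used, so it does not matter how $F$ pairs the vertices. For $k=1$ there is nothing to do, since $C^{[1]}_n=C_n$ is already a hamilton cycle of $X$. So assume $k\ge 2$ and fix an arbitrary edge $xy\in F$ with $x=(w_p,z_1)$ and $y=(w_q,z_k)$.

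First I would record the parity structure. Since $n$ is even, colouring $(w_i,z_t)$ by the parity of $i+t$ is a proper $2$-colouring, so $C^{[k]}_n$ is a balanced bipartite graph. The rungs between rows $t$ and $t+1$ sit exactly at the positions with $i+t$ even, so the positions of the rungs between rows $t,t+1$ and between rows $t+1,t+2$ are complementary. Consequently:
\begin{itemize}
\item the degree-$2$ vertices of $C_{n,1}$ are those with $i$ even;
\item the degree-$2$ vertices of $C_{n,k}$ are those with $i+k$ even;
\item $x$ and $y$ always have opposite colours, which is the necessary condition for a hamilton path from $x$ to $y$ in a balanced bipartite graph.
\end{itemize}
The claim to prove is therefore: for every $k\ge 2$, every degree-$2$ vertex $x$ of the top cycle and every degree-$2$ vertex $y$ of the bottom cycle, $C^{[k]}_n$ has a hamilton path from $x$ to $y$.

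I would prove this claim by induction on $k$, peeling off the top row. Starting at $x$, walk around $C_{n,1}$ in one direction until reaching the cycle-neighbour $x'$ of $x$. Because rung positions alternate and $x$ has no rung, $x'$ has one, so we can descend to $x''=(w_{p\pm1},z_2)$. By the complementarity observation, $x''$ has no rung to row $3$. Hence $x''$ is a degree-$2$ vertex of the top cycle of the brick product on rows $2,\dots,k$, which is isomorphic to $C^{[k-1]}_n$ after an index shift. Induction then supplies a hamilton path of that product from $x''$ to $y$; concatenating gives the required path.

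The base case $k=2$ is where this naive scheme breaks, and I expect it to be the main obstacle. After descending to row $2$ we are left with only the cycle $C_{n,2}$, and we need a hamilton path of it from $x''$ to $y$. That exists only if $x''$ and $y$ are adjacent on $C_{n,2}$, which the choice of $x$ and $y$ does not guarantee. To gain flexibility I would strengthen the peeling step. Instead of covering the whole top row at once, cover an arc of it, drop down, traverse part of row $2$, and climb back up through another rung to cover the rest of row $1$ before descending again. This is a boustrophedon pattern on the hexagonal cylinder. The complementary rung pattern lets one shift the descent point horizontally by any even amount, so the entry vertex into the lower rows can be prescribed freely among the admissible positions; at each step I would then choose it to suit the position of $y$.

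Concretely, I would prove directly, by an explicit zigzag, that $C^{[2]}_n$ (and, if needed, $C^{[3]}_n$) is laceable between admissible top and bottom vertices of opposite colours, distinguishing the relative positions of $p$ and $q$ modulo $n$. The induction step then only has to realise the strengthened ``prescribed exit'' version, which follows from the same zigzag. Small values of $n$ (for instance $n=4$) and the wrap-around case $p=q$ will need separate checks. If the explicit zigzag became too messy, a fallback is the freedom to choose a convenient edge of $F$: any $xy\in F$ with $x$ and $y$ in suitable relative position is enough.
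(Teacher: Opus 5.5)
Your reduction rests on a false claim, so the gap is not confined to the base case. You need a hamilton path of $C^{[k]}_n$ between the ends $x=(w_p,z_1)$, $y=(w_q,z_k)$ of a single edge of $F$, and you claim such a path exists for all admissible $x,y$. Such paths are in fact rigid.

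Every edge of $C_{n,1}$ has exactly one end of degree $2$, and each such end other than $x$ must use both of its edges. So the path uses every edge of $C_{n,1}$ except one at $x$, sweeps row $1$ in one go, and leaves it by the rung at $w_{p\pm1}$. After that every vertex of row $1$ is saturated. Hence your boustrophedon move of climbing back into row $1$ through a second rung is impossible, and the exit point cannot be shifted by an arbitrary even amount.

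In row $2$ the other up-rungs are now dead, so the same argument applies to rows $2,\dots,k$; your peeling step is in fact forced. Inductively one gets $q-p\equiv\epsilon_1+\cdots+\epsilon_k\pmod n$ with all $\epsilon_t=\pm1$, i.e.\ $q-p\in\{-k,-k+2,\dots,k\}$ modulo $n$. This covers all admissible $q$ only when $k+1\ge n/2$. For $k=2$ it is exactly the adjacency condition you ran into, and no zigzag can remove it: for example, $C^{[2]}_8$ has no hamilton path from $(w_2,z_1)$ to $(w_6,z_2)$.

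Your fallback of picking a convenient edge of $F$ also fails, because $F$ is arbitrary. Suppose $k+1<n/2$. Choose $d\equiv k\pmod 2$ outside that window, and let $F$ join $(w_p,z_1)$ to $(w_{p+d},z_k)$ for every even $p$. Then no edge of $F$ closes a hamilton path of $C^{[k]}_n$. Moreover, $C^{[k]}_n$ itself is not hamiltonian, since a hamilton cycle would have to contain all of $C_{n,1}$. So every hamilton cycle of $X$ uses at least two edges of $F$.

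Such cycles do exist. For $n=8$, $k=2$, $d=4$, write $(i,t)$ for $(w_i,z_t)$. The paths $(2,1)(1,1)(8,1)(7,1)(7,2)(8,2)(1,2)(2,2)$ and $(6,1)(5,1)(4,1)(3,1)(3,2)(4,2)(5,2)(6,2)$, together with the $F$-edges $(2,2)(6,1)$ and $(6,2)(2,1)$, form a hamilton cycle.

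The missing ingredient is therefore an argument that, for every perfect matching $F$, one can cover $C^{[k]}_n$ by vertex-disjoint paths whose ends are joined by edges of $F$ into a single cycle. This is what \cite[Corollary 2.3]{AlZh1989} supplies. The paper does not reprove it; it simply quotes that result.
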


\begin{lemma}\label{lem_ab_coprime3}
Let $G=B(m; a, S,b)$ be a  bicirculant graph  with $m>5$,  $|S|\ge 3$,  $\gcd(m,S)>1$, $a,b \ne m/2$ both   coprime to $\gcd(m,S)$ and $b\equiv\pm a\pmod{\gcd(m,S)}$.  Assume that the connected components of $H(m; S)$ are hamiltonian. Then  $G$ is hamiltonian. 
\end{lemma}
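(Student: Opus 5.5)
Since the statement was placed right after recalling the brick product and Lemma \ref{lem_brick2}, I will reduce $G$ to a graph of the form $X$ from that lemma. Set $g=\gcd(m,S)$ and let $H_0,\dots,H_{g-1}$ be the components of $H(m;S)$, with $H_0\ni u_0$. By the argument in the proof of Lemma \ref{lem_nonuniform_representation}, outer edges of type $a$ join $H_i$ to $H_{i+1}$. Replacing $b$ by $-b$ if necessary, we may assume $b\equiv a \pmod g$. Then inner edges of type $b$ also join $H_i$ to $H_{i+1}$, so the components of $H(m;S)$ form a single cycle $H_0,H_1,\dots,H_{g-1}$, joined both through outer and through inner edges. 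Since $a$ is coprime to $g$, this cycle has length $g$.

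The key step is to choose a hamilton cycle $C$ of $H_0$; this exists by hypothesis and has length $2m/g$, which is even. Translate it by multiples of $a$ to obtain cycles $C_i$ in $H_i$, namely $C_i=C+ia$ on subscripts. The outer vertex $u_x$ of $C_i$ is adjacent to $u_{x+a}$ in $C_{i+1}$. The inner vertex $v_y$ is adjacent to $v_{y+b}$, and $v_{y+b}$ lies in $H_{i+1}$ but is in general not the ``same'' position as $v_{y+a}$. So I would treat two cases.

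\textbf{Case $b\equiv a\pmod m$, or more generally $b-a\in\langle S\rangle+m\ZZ$ realised by an automorphism of $C$.} Then all matchings between consecutive cycles are ``straight''. Positions alternate outer/inner along $C$, so the union of the cycles $C_0,\dots,C_{g-1}$ with the type-$a$ edges at outer positions and type-$b$ edges at inner positions between $C_i$ and $C_{i+1}$ ($0\le i\le g-2$) is exactly a brick product $C^{[g]}_n$ with $n=2m/g$ even. Between $C_{g-1}$ and $C_0$, the remaining outer edges (from $u$-vertices of $C_{g-1}$) and inner edges form a perfect matching $F$ between the degree-2 vertices of the end cycles. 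By the parity of $g$ one checks that the degree-2 vertices of $C_{n,1}$ and $C_{n,g}$ are precisely the ones used. Lemma \ref{lem_brick2} then gives a hamilton cycle.

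\textbf{General case, which I expect to be the main obstacle.} Here the type-$b$ matching from $C_i$ to $C_{i+1}$ does not respect the positions of $C$. My plan is to avoid needing straightness by not insisting on a brick product across every layer. I would use only the outer (type-$a$) edges, which are straight, to build the rungs at outer positions, and choose the cycles in $H_{i}$ as $C+ia$ consistently. For the inner rungs I would use the fact that only the wrap-around layer needs an arbitrary matching, by re-indexing. Concretely, I would define the cycle in $H_i$ as $C+ib$ shifted so that inner positions match via $b$. The outer edges then differ by the shift $i(a-b)$, and since $a-b\equiv 0\pmod g$ this shift maps $H_0$ to itself but may not preserve $C$.

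If this fails, the fallback is a different decomposition. Take $H'$ to be the circulant-like graph on $H_0\cup\dots\cup H_{g-1}$ and use Theorem \ref{th_main}'s approach via Lemma \ref{lemma_construction_2nd_paper}, observing that $b\equiv a$ means $\gcd(m,S,b-a)$-type arguments apply after the isomorphism of Lemma \ref{lemma:iso}. Alternatively, for $g$ even, build a ``zig-zag'' cycle: traverse $C_i$ minus one edge, leave by an outer edge, and enter $C_{i+1}$ at the image vertex. The only delicate point is closing up between $H_{g-1}$ and $H_0$, where I would use an inner edge of type $b$ and choose the deleted edges of $C_i$ so the entry and exit vertices are adjacent on $C_i$. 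Since $C_i$ is a cycle, its vertex set can always be traversed as a hamilton path between two cycle-adjacent vertices. Making these endpoints line up across all $g$ layers, especially for $g$ odd, is the crux.
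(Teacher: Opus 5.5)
\textbf{Gap: the brick-product step fails when $\gcd(m,S)$ (your $g$) is odd.} Your sentence ``by the parity of $g$ one checks\dots'' is false in that case, even when $a=b$.

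Say the rungs between $C_i$ and $C_{i+1}$ are outer for $i$ even and inner for $i$ odd. Then the degree-$2$ vertices of $C_0$ are its inner vertices. For $g$ odd the rungs between $C_{g-2}$ and $C_{g-1}$ are inner, so the degree-$2$ vertices of $C_{g-1}$ are its outer vertices. The only edges of $G$ from $H_{g-1}$ to $H_0$ are outer--outer (type $a$) and inner--inner (type $b$). None of them joins a degree-$2$ vertex of one end cycle to a degree-$2$ vertex of the other, so Lemma~\ref{lem_brick2} cannot be applied. This is the unavoidable parity obstruction to alternating rung types around an odd cyclic sequence of layers.

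The paper uses the brick product only for $\gcd(m,S)$ even. For $\gcd(m,S)$ odd it gives explicit path-joining constructions:
\begin{itemize}
\item If $a=\pm b$: take $P$ in $H_0$; in each middle component take $C-u_0$ together with the isolated copy of $u_0$; take $C-u_0v_0$ in the last component. Chain all copies of $u_0$ by outer edges, and link the remaining paths by inner edges alternately at the copies of $v_s$ and $v_0$.
\item If $a\ne\pm b$: cut each middle copy of $C$ into two paths by deleting $u_0v_0$ and $u_pv_{b-a}$. Link consecutive components by the inner edges from the copy of $v_0$ to the copy of $v_{b-a}$, and by outer edges alternately at the copies of $u_0$ and $u_p$.
\end{itemize}
Your zig-zag fallback names exactly this closing-up problem as ``the crux'' but leaves it unresolved.

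\textbf{The general case for $g$ even.} This is not actually an obstacle, but you miss the reason. The brick product uses only one type of rung between any two consecutive layers, so you never need the outer and inner matchings to be straight at the same time. Take the cycle in $H_i$ to be $C+\tau_i$, where $\tau_0=0$ and $\tau_{i+1}=\tau_i+a$ or $\tau_i+b$ according as the rungs between layers $i$ and $i+1$ are outer or inner. Since $b\equiv a\pmod{\gcd(m,S)}$, both translations carry $H_i$ onto $H_{i+1}$, so every rung you use is straight. The closing matching $F$ (the inner edges from $H_{g-1}$ to $H_0$) is allowed to be arbitrary in Lemma~\ref{lem_brick2}. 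Your attempts with $C+ia$ or $C+ib$ for all $i$ fail precisely because they try to straighten both matchings at once.

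\textbf{The fallback via Lemma~\ref{lemma:iso} and Lemma~\ref{lemma_construction_2nd_paper} is not viable.} The latter requires $\gcd(m,S,b)>1$, which fails because $b$ is coprime to $\gcd(m,S)$. Replacing $S$ by $S-c$ does not change $a$ or $b$.
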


\begin{proof}
Let $H_0,..., H_g$ be the connected components of $H(m;S)$, where $g=\gcd(m,S)-1$; $H_0$ is the component containing the vertex $u_0$. In this notation, the outer and inner vertices of $H_i$ are adjacent to the outer and inner vertices of $H_{i+1}$ through the edges of type $a$ and $b$ for $0 \le i \le g$, since $b\equiv a\pmod{g+1}$. More precisely, the copy of a vertex $u_x$ from $H_0$ in $H_i$ is $u_{x+ai}$ and we denote it by $u_x^i$; the copy of a vertex $v_x$ from $H_0$ in $H_i$ is $v_{x+ai}$ and we denote it by $v_x^i$. So   we can join the copy of the vertex $u_x$ in $H_i$ to
the copy of the vertex $u_{x}$ in $H_{i+1}$.  As for  the inner vertices, the copy of the vertex $v_x$ in $H_i$ is adjacent to the copy of the vertex $v_{x-a+b}$
in $H_{i+1}$ since $v_x^i=v_{x+ai}$ is adjacent to $v_{x+ai+b}=v_{x+a(i+1)-a+b}=v_{x-a+b}^{i+1}$. We consider cases where $g+1$ is odd and where $g+1$ is even separately.
\medskip


\noindent
\textbf{Case 1: $g+1$ is odd.}
\medskip

\noindent
Let $C$ be a hamilton cycle in $H_0$.  For convenience of notation, we will represent $C$ as the union of the hamilton path $v_s\, P\, u_0$, from $v_s$ to $u_0$,
and the edge $u_0v_s$,  i.e., $C=v_s\, P\, u_0\cup u_0v_s$. We will use the notation introduced in Section \ref{sec_extension_method1} for the vertices in $v_s\, P\, u_0$.  Once again, we point out that the construction we are going to define is independent of the edges of type $0$ that appear in the notation of $v_s\, P\, u_0$.  We treat separately the case where  $a=b$ or $a=-b$, and the case where $a\ne\pm b$.  
In both cases we give a list of paths that will be joined to form a hamilton cycle in $G$.

\paragraph{Case 1.1: $a=b$  or $a=-b$.} 
Without loss of generality we may assume that $a=b$, since otherwise we can replace  $b$ with $-b$.
We consider the path $v_s\,P\,u_0$  in $H_0$,  the  isolated vertex $u^i_0$ together with a copy of the path $v_s\,P\,v_0$  in each $H_i$ with $1\le i\le g-1$,
and a copy of $u_0\,C\,v_0=C-u_0v_0$ in $H_{g}$.

We connect the above paths and form a hamilton cycle in $G$ through the following edges:
the edges from the copy of $v_s$ in $H_i$ to the copy of $v_s$ in $H_{i+1}$ for every $0\le i\le g-1$, $i$ even;
the edges from the copy of $v_0$ in $H_i$ to the copy of $v_0$ in $H_{i+1}$ for every $1\le i\le g-1$, $i$ odd;
the edges from the copy of $u_0$ in $H_i$ to the copy of $u_0$ in $H_{i+1}$ for every $0\le i\le g-1$. The assertion follows if $a=b$ or $a=-b$.

\paragraph{Case 1.2: $a\ne\pm b$.}
Without loss of generality we can assume that $b\equiv a\pmod{g+1}$,  with $b\ne\pm a$. 
In our notation,  the outer vertices of $H_i$ are adjacent to the outer vertices of $H_{i+1}$, so that we can join the copy of the vertex $u_x$ in $H_i$ to
the copy of the vertex $u_{x}$ in $H_{i+1}$.  As for  the inner vertices, the copy of the vertex $v_0$ in $H_i$ is adjacent to the copy of the vertex $v_{-a+b}$
in $H_{i+1}$. This is due to the fact that $v_b$ is in $H_1$, since $b\equiv a\pmod{g+1}$,  with $b\ne\pm a$, and $H_1$ can be obtained from $H_0$ by adding $a$ modulo $m$
to the subscripts of the vertices in $H_0$; so $v_{0+ai}=v_0^i$ is adjacent to $v_{0+ai+b}=v_{0+a(i+1)-a+b}=v_{0-a+b}^{i+1}$.

We denote with $u_p$, $u_q$ the vertices of $v_s\,P\,u_0$ that are adjacent to $v_{-a+b}$, and assume that they occur in the order $v_s$, $u_p$, $v_{-a+b}$,  $u_q$.
Let $u_0\,P'\,u_p$ and  $v_{-a+b}\,P'\,v_0$ be the paths we obtain by deleting the edges $u_0v_0$ and $u_pv_{-a+b}$ from $C$, and let $u_p\,P'\,v_{-a+b}$ be the path we obtain by removing the edge $u_pv_{-a+b}$ from $C$.

We consider a copy of $C-u_0v_0$ in $H_0$, a copy of the paths $u_0\,P'\,u_p$ and   $v_{-a+b}\,P'\,v_0$ in each $H_i$ with $1\le i\le g-1$,  and a copy of $u_p\,P'\,v_{-a+b}$ in $H_g$.
We connect the paths, and form a hamilton cycle in $G$, by adding the following edges: the edges from the copy of $u_0$  in $H_i$ to the copy of $u_0$ in $H_{i+1}$
for every $0\le i\le g-2$, $i$ even,  the edges from the copy of $u_p$ in $H_i$ to the copy of $u_p$ in $H_{i+1}$
for every $1\le i\le g-1$, $i$ odd,  the edges from the copy of $v_0$ in $H_i$ to the copy of $v_{-a+b}$ in $H_{i+1}$ for every $0\le i\le g-1$. The assertion follows for $g+1$ odd and $a\ne\pm b$.
\medskip


\noindent
\textbf{Case 2: $g+1$ is even.}
\medskip

\noindent
Let $C$ be a hamilton cycle in the component $H_0$ and denote by $C_i$ the copy of $C$ in the component $H_i$ for $i=0,\dots,g$. We connect the cycles $C_0,\dots,C_g$ through the following edges of type $a$ and $b$:
\begin{itemize}
\item the edges from the copy of $u_x$ in $H_i$ to the copy of $u_x$ in $H_{i+1}$ for every $0\le i\le g-1$, $i$ even, and every $0 \le x < m$;
\item the edges from the copy of $v_x$ in $H_i$ to the copy of $v_{x-a+b}$ in $H_{i+1}$ for every $1\le i\le g-1$, $i$ odd, and every $0 \le x < m$.
\end{itemize}
Observe that the subgraph $X$ of the graph $G$ obtained in this way is isomorphic to the brick product of the cycle $C_{2m}$ and the path $P_{g+1}$. 
The edges from the copy of $v_x$ in $H_g$ to the copy of $v_{x-a+b}$ in $H_{0}$ for every every $0 \le x < m$ form a perfect matching $F$ joining the vertices of degree two in the first and the last cycle of $X$. By Lemma \ref{lem_brick2}, the graph $X \cup F$ has a hamilton cycle. It follows that the graph $G$ is hamiltonian.
\end{proof}

\section{Proofs of the main theorems}\label{sec:main}

This section contains the proofs of the main theorems of this paper, namely Theorems \ref{th_bicirculant_s3} -- 
\ref{th_cyclich_Haar_then_bicirculant_ham} and Theorem \ref{thm:primes_3rd_paper}, together with some auxiliary results that are suggested from the proof of Theorem \ref{th_bicirculant_s3} and are used to prove Theorems \ref{cor_main} and  \ref{th_cyclich_Haar_then_bicirculant_ham}.
We denote by $\B(m;d,s)$  the class of $d$-valent bicirculants of order $2m$ whose vertices are incident to $s$ spokes. 

\medskip

\noindent
{\bf Proof of Theorem \ref{th_bicirculant_s3}.} See Theorem \ref{th_bicirculant_s2} for $s\le 2$. 
In the following we prove that a connected bicirculant graph $G=B(m;R, S, T)$ with $|S|=s=3$ is hamiltonian, by addressing various special cases separately.

The assertion is true for $m\le 5$ by Lemma \ref{lemma_bicirculant_m<5}. It is also true for $m>5$ in the case where $H(m; S)$ is connected by Theorem \ref{th_cyclic_Haar_s3}. In particular, the assertion is  true if $R$ and $T$ are empty, that is, $G$ is a cyclic Haar graph.
Therefore, in the following we assume that $m>5$, $\gcd(m, S)>1$ and $R$, $T$ are non-empty. 

If $R\cup T$ contains at least one element that is not coprime to $\gcd(m, S)$, then the existence of a hamilton cycle in $G$ is a consequence of Theorem \ref{th_main}. In particular, it is a consequence of Theorem \ref{th_m/2} in the case where $m/2\in R, T$.

From now on assume that the sets $R,T$ are non-empty, they do not contain  $m/2$,
and every element from $R \cup T$ is coprime to $\gcd(m,S)$. If there exist elements $a\in R$ and  $b\in T$ such that $b\equiv\pm a\pmod{\gcd(m, S)}$, or such that the subgraph $B(m; a, S,b)$ has a $(\lambda,\mu,\rho)$-non-uniform representation with $\rho>0$ and $\mu>1$, then we can find a hamilton cycle in $B(m; a, S,b)$ using 
Lemma \ref{lem_ab_coprime3} and Lemma  \ref{lem_extension_method1} (ii), respectively.
A hamilton cycle in $B(m; a, S,b)$ is also a hamilton cycle in $G$, since $B(m; a, S,b)$ is a connected subgraph of $G$ spanning its vertices.  

It remains to consider the case where for every pair $(a, b)$, with $a\in R$ and $b\in T$, we have $b \not\equiv\pm a\pmod{\gcd(m,S)}$  and the subgraph $B(m; a, S,b)$ does not have a $(\lambda,\mu,\rho)$-non-uniform representation with $\rho>0$ and $\mu>1$.  Pick arbitrary $a \in R$ and $b \in T$. Then the graph $B(m; a, S,b)$ has a $(\lambda,\mu,\rho)$-non-uniform representation with $\rho=0$ and $\mu>1$ by Lemma \ref{lem_nonuniform_representation}. We show that the assumptions of Lemma \ref{lem_ab_coprime2}
hold for the graph  $B(m; a, S,b)$.
In fact, let us remove an arbitrary nonzero element $c$ from $S$ and denote with $S'$ the set of the remaining elements of $S$.  
The connected components of $B(m; a, S',b)$, possibly one,  are hamiltonian by Theorem \ref{th_bicirculant_s2}, since they are generalized rose window graphs. 
The connected components of $H(m; S')$ are also hamiltonian as they are cycles since $|S'|=2$. The existence of a hamilton cycle in $G$ now follows from  Lemma \ref{lem_ab_coprime2}.
It is thus proved that every connected graph from the class $\mathcal B(m; d, 3)$  is hamiltonian, for every $d\ge 3$.
\qed

\medskip

The proof of Theorem \ref{th_bicirculant_s3} suggests the following partition of the class of connected bicirculant graphs with $\gcd(m, S)>1$ that are not covered by Theorems \ref{th_m/2} and \ref{th_main}, and are not
cyclic Haar graphs: bicirculants of type I and type II.

\begin{definition}
Let $G=B(m; R, S, T)$ be a connected bicirculant graph with $|S|\ge 4$, $\gcd(m, S)>1$, $m/2\not\in R\cup T$, $R\cup T\ne\emptyset$, and all elements in $R\cup T$ coprime to $\gcd(m, S)$. We say that $G$ is of 
\begin{itemize}
\item \emph{type I} if there exist $a\in R$, $b\in T$ such that $b\equiv\pm a\pmod{\gcd(m, S)}$, or such that the subgraph $B(m; a, S,b)$ has a $(\lambda,\mu,\rho)$-non-uniform representation with $\rho>0$ and $\mu > 1$;
\item \emph{type II} if for every pair $(a, b)\in R\times T$  we have $b\not\equiv\pm a\pmod{\gcd(m,S)}$  and
the subgraph $B(m; a, S,b)$ does not have a $(\lambda,\mu,\rho)$-non-uniform representation with $\rho>0$ and $\mu > 1$; i.e., it has a $(\lambda,\mu,\rho)$-non-uniform representation with $\rho=0$ and $\mu > 1$.
\end{itemize}
\end{definition}

The following result holds for bicirculant graphs of type I and II. 

\begin{proposition}\label{pro_main_3rdpaper}
Let $G=B(m; R, S, T)$ be a connected bicirculant graph  with $m>5, $ 
$|S|\ge 4$, $\gcd(m, S)>1$, $m/2\not\in R\cup T$, $R\cup T\ne\emptyset$, and all elements in $R\cup T$ coprime to $\gcd(m, S)$. Then the following statements hold.
\begin{itemize}
\item[(i)] If $G$ is of type I and the connected components of $H(m; S)$ are hamiltonian, then $G$ is hamiltonian.
\item[(ii)] If $G$ is of type II and $|S|=4$, then $G$ is hamiltonian.
\end{itemize}
\end{proposition}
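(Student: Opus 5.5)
For both parts we pass to a spanning subgraph $G_{a,b}=B(m;a,S,b)$ with $a\in R$, $b\in T$. A hamilton cycle of $G_{a,b}$ is a hamilton cycle of $G$, since $G_{a,b}$ spans $V(G)$. We must first check that some $G_{a,b}$ is connected. By Proposition~\ref{prop:connected} this holds whenever $\gcd(m,a,S,b)=1$. Every element of $R\cup T$ is coprime to $\gcd(m,S)$, so $\gcd(m,S,a)=1$ already. Hence $G_{a,b}$ is connected for every $a\in R$, $b\in T$, provided both $R$ and $T$ are nonempty.

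If one of them is empty, say $T=\emptyset$, then $\gcd(m,S,a)=1$ shows that $B(m;R,S,\emptyset)$ is connected. We then need a separate argument. I would reduce this case to the $b\equiv \pm a$ situation by reinterpreting the construction of Lemma~\ref{lem_ab_coprime3} using only outer edges. Concretely, the outer edges of type $a$ link the components $H_0,\dots,H_g$ cyclically. The brick-product argument of Case~2 uses inner edges only between $H_g$ and $H_0$ and at the odd steps, so in that case I would instead adapt the path construction of Case~1.1. This corner case is a mild obstacle, and I would check whether the paper's conventions silently assume $R,T\neq\emptyset$.

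\textbf{Part (i).} Let $G$ be of type I and choose $a\in R$, $b\in T$ witnessing this.
\begin{itemize}
\item If $b\equiv\pm a\pmod{\gcd(m,S)}$, Lemma~\ref{lem_ab_coprime3} applies directly to $G_{a,b}$, because the components of $H(m;S)$ are hamiltonian by hypothesis.
\item Otherwise $G_{a,b}$ has a $(\lambda,\mu,\rho)$-non-uniform representation with $\rho>0$ and $\mu>1$. Then Lemma~\ref{lem_extension_method1}(ii) gives a hamilton cycle, again because the components of $H(m;S)$ are hamiltonian. The remaining hypotheses ($m>5$, $a,b\ne m/2$) hold by assumption.
\end{itemize}

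\textbf{Part (ii).} Let $G$ be of type II with $|S|=4$, and pick any $a\in R$, $b\in T$. By the definition of type II and Lemma~\ref{lem_nonuniform_representation}, $G_{a,b}$ has a non-uniform representation with $\rho=0$ and $\mu>1$. We therefore aim to apply Lemma~\ref{lem_ab_coprime2}, which needs some $c\in S\smallsetminus\{0\}$ with two properties:
\begin{itemize}
\item the components of $H(m;S\smallsetminus\{c\})$ are hamiltonian;
\item the components of $B(m;a,S\smallsetminus\{c\},b)$ are hamiltonian.
\end{itemize}
Take any nonzero $c$ and set $S'=S\smallsetminus\{c\}$, so $|S'|=3$. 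Each component of $H(m;S')$ is, by Proposition~\ref{pro:d_connected_components}, a connected cubic cyclic Haar graph, hence hamiltonian by Theorem~\ref{th_cyclic_Haar_s3}. Each component of $B(m;a,S',b)$ is a connected bicirculant with three spokes per vertex, hence hamiltonian by Theorem~\ref{th_bicirculant_s3}. The exceptions in that theorem do not arise: $K_2$ has one spoke, and the Alspach graphs have a single spoke per vertex, while here there are three.

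One detail remains: Lemma~\ref{lem_ab_coprime2} is stated for $|S|\ge 3$ with $G$ itself, not for the whole of $B(m;R,S,T)$. Since $G_{a,b}$ is a spanning connected subgraph of $G$, this suffices.

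I expect the main obstacle to be confirming that the hypotheses of Lemma~\ref{lem_ab_coprime2} really hold. In particular, the connectedness of the components of the reduced graphs must come from Proposition~\ref{pro:d_connected_components}, and the rescaled parameters must be checked against the hypotheses of Theorem~\ref{th_bicirculant_s3}. Everything else is direct citation.
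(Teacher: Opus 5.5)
Your proposal is correct and follows the paper's proof essentially step for step. Part (i) uses Lemma~\ref{lem_ab_coprime3} or Lemma~\ref{lem_extension_method1}(ii) on the spanning subgraph $B(m;a,S,b)$. Part (ii) deletes a nonzero $c$ and feeds Theorems~\ref{th_bicirculant_s3} and~\ref{th_cyclic_Haar_s3} into Lemma~\ref{lem_ab_coprime2}.

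The $T=\emptyset$ digression is unnecessary. A bicirculant is regular, so $|R|+|S|=|T|+|S|$ forces $|R|=|T|$, and therefore $R\cup T\neq\emptyset$ already makes both $R$ and $T$ nonempty.
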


\begin{proof}
(i) Assume that the graph $G$ is  of type I and the connected components of $H(m; S)$ are hamiltonian.  
    Since $G$ is type I, there exist elements $a\in R$, $b\in T$ such that $b\equiv\pm a\pmod{\gcd(m, S)}$, or such that the subgraph $B(m; a, S,b)$ has a $(\lambda,\mu,\rho)$-non-uniform representation with $\rho>0$ and $\mu>1$. In the first case  we can find a hamilton cycle in $B(m; a, S,b)$ using  Lemma  \ref{lem_ab_coprime3}. In the second case we can find a hamilton cycle in $B(m; a, S,b)$ using  Lemma \ref{lem_extension_method1} (ii). A hamilton cycle in $B(m; a, S,b)$ is also a hamilton cycle in $G$, since $B(m; a, S,b)$ is a connected subgraph of $G$ spanning its vertices. Thus, it  is proved that a graph $G$ of type I is hamiltonian if the connected components of $H(m; S)$ are hamiltonian.
\medskip

    \noindent
    (ii) Let $G$ be of type II and let $|S|=4$. Then there exist elements $a\in R$, $b\in T$ such that $b \not \equiv\pm a\pmod{\gcd(m, S)}$ and the subgraph $B(m; a, S,b)$ has a  $(\lambda,\mu,\rho)$-non-uniform representation with $\rho=0$ and $\mu>1$. 
    We show that the assumptions of Lemma \ref{lem_ab_coprime2} hold.

We remove an arbitrary nonzero element $c$ from $S$ and denote with $S'$ the set of remaining elements of $S$.  
The connected components of $B(m; a, S',b)$, possibly one,  are hamiltonian by Theorem \ref{th_bicirculant_s3}, since they are graphs belonging to the class $\mathcal B(m; d, 3)$. 
The connected components of $H(m; S')$ are hamiltonian by Theorem \ref{th_cyclic_Haar_s3} since $|S'|=3$. 
The existence of a hamilton cycle in $G$ then follows from Lemma \ref{lem_ab_coprime2}.
The assertion follows.
\end{proof}

The combination of Theorem \ref{th_generalized_dihedral} and Proposition \ref{pro_main_3rdpaper} 
allows us to prove Theorems \ref{cor_main} and \ref{th_cyclich_Haar_then_bicirculant_ham}.
\medskip

\noindent
{\bf Proof of Theorem \ref{cor_main}.} If $\gcd(m,S)=1$, then $G$ is hamiltonian since $H(m; S)$ has a hamilton cycle by Theorem \ref{th_generalized_dihedral}. 
In particular, this is true in the case where $G$ is a connected cyclic Haar graph.
The statement is true even when $m/2\in R, T$, since Theorem \ref{th_m/2} holds.
In the following we consider the case where $\gcd(m, S)>1$, $m/2\not\in R\cup T$, and $G$ is not a cyclic Haar graph, that is,
$|R|=|T|\ge 2$.

Since $m/\gcd(m,S)$ is even, the connected components of $H(m; S)$ are hamiltonian by Theorem \ref{th_generalized_dihedral}. 
Consequently, the graph $G$ is hamiltonian if $R\cup T$ contains at least one element that is not coprime to $\gcd(m, S)$, since
Theorem \ref{th_main} holds. It remains to study the case where
$G$ is not covered by Theorems \ref{th_m/2} and \ref{th_main}, that is, $G$ is a graph of type I or II. 

If $G$ is of type I, then $G$ is hamiltonian since the assumptions in Proposition \ref{pro_main_3rdpaper}(i) are satisfied.  If $G$ is of type II and $|S|=4$, then the assertion follows from Proposition \ref{pro_main_3rdpaper}(ii).

We consider the case where $G$ is of type II and $|S|\ge 5$. Since $m/\gcd(m,S)$ is even, there exists $c'\in S$ that is not divisible by the greatest power of $2$ dividing $m$. Then $m/\gcd(m, S')$ is even for every proper subset $S'$ of $S$ containing $c'$. This means that the connected components of $H(m; S')$ are hamiltonian  by Theorem \ref{th_generalized_dihedral}.  

In the rest of the proof, $S'$ will denote a subset of $S$ containing $c'$, that is, $m/\gcd(m, S')$ is even. The element $c'$ will never be removed from $S$ in the iterative process that we are going to define.

We prove the claim for $|S|=5$. We consider a subgraph $G'=B(m; a, S, b)$ of $G$ with $a, b\ne m/2$. The graph $G'$ is a connected spanning subgraph of $G$, since $G$ is of type II. We remove an element $c$ from $S$ that is distinct from $0,c'$,
and denote with $S'$ the set of remaining elements of $S$.  
The connected components of $B(m; a, S', b)$ are bicirculant graphs $B(m/\delta; a/\delta, S'/\delta, b/\delta)$ with $\delta=\gcd(m, a, S', b)$.
They are still graphs of type II, since  Lemma \ref{lem_rho=02} 
holds, and belong to the class $\mathcal B(m; d, 4)$ with $d>4$. 
They are hamiltonian by Proposition \ref{pro_main_3rdpaper}(ii). Therefore, if $B(m; a, S', b)$ is connected, then $G$ is hamiltonian. 

We now deal with the case where $B(m; a, S', b)$ is disconnected. By the previous remark, the components of $B(m; a, S', b)$ are hamiltonian.
Similarly, the components of $H(m; S')$ are hamiltonian by Theorem \ref{th_generalized_dihedral}. 
Then $G'$, and consequently $G$, is hamiltonian since the assumptions in Lemma \ref{lem_ab_coprime2} 
are satisfied. It is thus proved that every 
connected bicirculant graph $G=B(m; R, S, T)$ of type II, with $|S|=5$ and $m/\gcd(m, S)$ even, is hamiltonian.

In a similar way we now we prove that $G=B(m; R, S, T)$ of type II is hamiltonian if $|S|=6$.
We consider a subgraph $G'=B(m; a, S, b)$ of $G$ with $a, b\ne m/2$. The graph $G'$ is a connected spanning subgraph of $G$, since $G$ is of type II. We remove an element 
$c$ from $S$ that is distinct from $0,c'$,
and denote with $S'$ the set of remaining elements of $S$.  
The connected components of $B(m; a, S', b)$ are bicirculant graphs $B(m/\delta; a/\delta, S'/\delta, b/\delta)$
with $\delta=\gcd(m, a, S', b)$.  They are also graphs of type II by Lemma \ref{lem_rho=02},
and belong to the class $\mathcal B(m; d, 5)$ with $d>5$.
Since they have order $2m/\delta$ and $(m/\delta)/gcd(m/\delta, S'/\delta)=m/\gcd(m, S')$ is even,
they are hamiltonian because of the previous result on bicirculant graphs of type II 
belonging to the class $\mathcal B(m; d, 5)$ that have $m/\gcd(m, S)$ even. 
Therefore, if $B(m; a, S', b)$ is connected, then $G$ is hamiltonian. For the case where $B(m; a, S', b)$ is disconnected, we can repeat the same arguments as in the case with $|S|=5$, and find a hamilton cycle in the graph $G'$, and consequently in $G$, from Lemma \ref{lem_ab_coprime2}.
It is thus proved that every graph $G=B(m; R, S, T)$ of type II, with $|S|=6$ and $m/\gcd(m,S)$ even, is hamiltonian.

We can iterate the process and repeat the above arguments for graphs $G=B(m; R, S, T)$ of type II with $|S|\ge 7$.  Thus, we have that every graph $G=B(m; R, S, T)$ of type II, with $|S|\ge 7$ and $m/\gcd(m,S)$ even, is hamiltonian. The assertion follows.
\qed

\medskip

\noindent
{\bf Proof of Theorem \ref{th_cyclich_Haar_then_bicirculant_ham}.} 
Let $G=B(m;R,S,T)$ be a connected bicirculant graph with $|S|=4$ and assume that every connected cyclic Haar graph of valence at least 4 is hamiltonian. We need to prove that $G$ is hamiltonian.
The statement is straightforward when $\gcd(m, S)=1$, since in this case $H(m; S)$ is a connected spanning subgraph of
$G=B(m; R, S, T)$ that has a hamilton cycle. The assertions follows from Theorems \ref{th_m/2} or \ref{th_main}
in the case where $\gcd(m, S)>1$ and $m/2\in R, T$, or $R\cup T$ contains at least one
element that is not coprime to $\gcd(m, S)$. In the following we consider $\gcd(m, S)>1$ and also assume that $G$ is not covered by 
Theorems \ref{th_m/2} or \ref{th_main}, that is, $G$ is a graph of type I or II.

The existence of a hamilton cycle in $G$ is a consequence of Proposition \ref{pro_main_3rdpaper}(i) if 
$G$ is a graph of type I. It is a consequence of Proposition \ref{pro_main_3rdpaper}(ii) if $G$ is a graph of type II with $|S|=4$.
It remains to study the case where $G$ is a graph of type II with $|S|\ge 5$.

We first prove the claim for $|S|=5$. We consider a subgraph $G'=B(m; a, S, b)$ of $G$ with $a, b\ne m/2$. The graph $G'$ is a connected
spanning subgraph of $G$, since $G$ is of type II. We remove an element $c \ne 0$ from $S$ and denote with $S'$ the set of remaining elements of $S$.  
The connected components of $B(m; a, S', b)$ are graphs of type II, since  Lemma \ref{lem_rho=02}  holds, that belong to the class
$\mathcal B(m; d, 4)$ with $d>4$. They are hamiltonian because of the first part of the proof on graphs of type II that belong to the class
$\mathcal B(m; d, 4)$ (equivalently, they are hamiltonian because of Proposition \ref{pro_main_3rdpaper}(ii)). 
Therefore, if $B(m; a, S', b)$ is connected, then $G$ is hamiltonian. 

We consider the case where $B(m; a, S', b)$ is disconnected. By the previous remark, the components of $B(m; a, S', b)$ are hamiltonian.
By the assumptions, the connected components of $H(m; S')$ are hamiltonian. Then $G'$, and consequently $G$, is hamiltonian since the assumptions in Lemma \ref{lem_ab_coprime2}
are satisfied. It is thus proved that every graph $G=B(m; R, S, T)$ of type II, with $|S|=5$, is hamiltonian.

We can iterate the process and repeat the above arguments for graphs $G=B(m; R, S, T)$ of type II with $|S|\ge 6$. 
Thus, we have that every graph $G=B(m; R, S, T)$ of type II, with $|S|\ge 6$, is hamiltonian. The assertion follows.
\qed

\bigskip

We close the section with the proof of Theorem \ref{thm:primes_3rd_paper}. We will use the following auxiliary result.

\begin{lemma}\label{lemma:primes}
Let $G=B(m; R, S, T )$ be a connected bicirculant graph with $|S|\ge 4$.
If $\gcd(m,R,T)$ is a product of at most three prime powers, then $G$ is hamiltonian. 
\end{lemma}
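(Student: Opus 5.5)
The natural first step is to discard spokes until only a few spoke types remain, while keeping the subgraph connected. Set $e=\gcd(m,R,T)$. Since $G$ is connected, Proposition~\ref{prop:connected} gives $\gcd(e,S)=1$. Hence for each of the (at most three) primes $p_1,p_2,p_3$ dividing $e$ there is some $c_i\in S$ with $p_i\nmid c_i$.

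Let $S''=\{0,c_1,c_2,c_3\}\subseteq S$, dropping repetitions. Then $\gcd(m,R,S'',T)=\gcd(e,S'')=1$, so $G''=B(m;R,S'',T)$ is a connected spanning subgraph of $G$ with $|S''|\le 4$. Any hamilton cycle of $G''$ is a hamilton cycle of $G$. I would also choose the $c_i$ so as to maximise overlaps. If a single $c\in S$ avoids all the $p_i$, then $|S''|=2$. If $|S''|$ comes out as $1$ or $2$, I enlarge it to size $3$ by adding further elements of $S$, which is possible because $|S|\ge 4$. This enlargement rules out the Alspach generalized Petersen graphs, which only occur with one spoke type. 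So whenever at most two nonzero elements suffice, $G''$ has exactly three spoke types and is hamiltonian by Theorem~\ref{th_bicirculant_s3}; this settles in particular the case where $e$ is a product of at most two prime powers.

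The remaining case has $e$ divisible by exactly three primes, with no two elements of $S$ jointly avoiding all three. Then $|S''|=4$, and each $c_i$ is divisible by $p_j$ for $j\ne i$.

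I would then run the case analysis used in the proof of Theorem~\ref{th_cyclich_Haar_then_bicirculant_ham}, applied to $G''$.
\begin{itemize}
\item If $\gcd(m,S'')=1$, the plan would be to show that $H(m;S'')$ is hamiltonian directly. If that fails, I would rather try to replace some $c_i$ by another element of $S$ so as to force $\gcd(m,S'')>1$.
\item If $m/2\in R\cap T$, Theorem~\ref{th_m/2} applies.
\item If $R\cup T$ contains an element not coprime to $\gcd(m,S'')$, Theorem~\ref{th_main} applies.
\item Otherwise $G''$ is of type I or type II. Type II with four spoke types is Proposition~\ref{pro_main_3rdpaper}(ii).
\end{itemize}

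The main obstacle is type I, because Proposition~\ref{pro_main_3rdpaper}(i) needs the components of $H(m;S'')$ to be hamiltonian. For this I would exploit the arithmetic structure. Every element of $R\cup T$ is divisible by $p_1p_2p_3$, and each $c_i$ is divisible by the other two primes. So the components of $H(m;S'')$ are quartic cyclic Haar graphs $H(m/d;S''/d)$ with $d=\gcd(m,S'')$.

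There I would first try Lemma~\ref{lemma:iso}, shifting $S''$ by some $c_i$ to change which differences generate. Failing that, I would use the $2$-hooked construction (Proposition~\ref{rem_2hooked2}) with $S'=S''\smallsetminus\{c_i\}$ for a suitable $i$. The components of $B(m;R,S',T)$ have three spoke types, so they are hamiltonian by Theorem~\ref{th_bicirculant_s3}. The components of $H(m;S')$ are cubic and hence hamiltonian by Theorem~\ref{th_cyclic_Haar_s3}. This reduces the problem to producing the required $u_0$--$u_b$ hamilton paths, in the spirit of Lemma~\ref{lem_ab_coprime2}. I expect this type I step to be the hardest part of the argument.
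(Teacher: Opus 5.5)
\textbf{There is a genuine gap in your remaining case.} Your opening reduction is fine and matches the paper. For each prime $p_i\mid e=\gcd(m,R,T)$ you pick an element of $S$ avoiding it, and whenever two nonzero spokes suffice you get a connected spanning subgraph with three spoke types, which is hamiltonian by Theorem~\ref{th_bicirculant_s3}.

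The problem is the case with exactly three primes, where each $c_i$ is divisible by $p_j$ for $j\ne i$. There you fall back on a four-spoke analysis. Its type~I branch, and also its $\gcd(m,S'')=1$ branch, need the components of the quartic cyclic Haar graph $H(m;S'')$ to be hamiltonian. Unless $m/\gcd(m,S'')$ is even (Theorem~\ref{th_generalized_dihedral}), this is not known. It is exactly the hypothesis on which Theorem~\ref{th_cyclich_Haar_then_bicirculant_ham} is made conditional.

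The $2$-hooked fallback does not rescue this either. The $u_0$--$u_b$ hamilton paths it requires come from Lemma~\ref{lem_extension_method1}(i). That lemma needs a $\rho=0$ representation and hamiltonian components of $H$, which is precisely what type~I lacks. So as written the argument does not close.

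\textbf{The missing idea.} A spanning sub-bicirculant may take any subset of $S$ as its spokes, not only subsets containing $0$. By Lemma~\ref{lemma:iso},
$B(m;R,\{c_1,c_2,c_3\},T)\cong B(m;R,\{0,c_1-c_3,c_2-c_3\},T)$.
Your divisibility pattern then does the rest:
\begin{itemize}
\item $p_1\mid c_3$ and $p_1\nmid c_1$, so $p_1\nmid c_1-c_3$;
\item $p_3\mid c_1$ and $p_3\nmid c_3$, so $p_3\nmid c_1-c_3$;
\item $p_2\mid c_3$ and $p_2\nmid c_2$, so $p_2\nmid c_2-c_3$.
\end{itemize}
Hence $\gcd(m,R,T,c_1-c_3,c_2-c_3)=1$. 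This gives a connected spanning subgraph with exactly three distinct spoke types, and Theorem~\ref{th_bicirculant_s3} finishes the proof.

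So your remaining case never actually needs four spokes. This translation step is exactly how the paper's proof concludes. The paper phrases it as a contradiction: assuming there is no connected spanning sub-bicirculant with two or three spokes forces $\gcd(m,R,T)$ to have at least four prime divisors.
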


\begin{proof}
Let $S=\{0, c_1,\ldots, c_{s-1}\}$, where $s=|S|\ge 4$. Since $G$ is connected, we have $\gcd(m, R, S, T)=1$. If $G$ contains a connected spanning subgraph that is isomorphic to a bicirculant graph whose vertices are incident to two or three spokes, then $G$ is hamiltonian by Theorem \ref{th_bicirculant_s3}. Therefore, we assume that this is not the case and we will show that then $\gcd(m, R, T)$ needs to be divisible by at least four distinct primes.

By the assumptions, we have $\gcd(m, R, T, c)>1$ for every element  $c\in S$ and $\gcd(m, R, T, c_i,c_j)>1$ for every pair of elements $c_i,c_j\in S$; see Proposition \ref{prop:connected}. Let $p$ be a prime that divides $\gcd(m, R, T, c_1)$. Since $\gcd(m, R, S, T)=1$, there exists an element $c_i\in S$ that is not divisible by $p$. Therefore, there exists a prime $q\ne p$ that divides $\gcd(m, R, T, c_1,c_i)$. Again, since $\gcd(m, R, S, T)=1$ and also $|S| \ge 4$, there exists an element $c_j\in S \smallsetminus \{c_1,c_i\}$ that is not divisible by $q$. Therefore, there exists a prime $r\ne p,q$ that divides $\gcd(m, R, T, c_i,c_j)$. It follows that $\gcd(m, R, T)$ is divisible by at least three distinct primes.

Now we assume that $\gcd(m, R, T)$ is divisible by exactly three distinct primes, namely $p,q,r$. 
Since $\gcd(m, R, S, T)=1$, there exists an element $c_k\in S \smallsetminus \{c_i,c_j\}$ that is not divisible by $r$ (possibly $c_k=c_1$).
Now it follows from $\gcd(m, R, T, c_i,c_k)>1$ and $\gcd(m, R, T, c_j,c_k)>1$ that
$c_k$ is divisible by $p,q$ and $c_j$ is divisible by $p$. We now have elements $c_i,c_j,c_k$ from $S$ such that $c_k$ is not divisible by $r$, $c_i$ is not divisible by $p$ and $c_j$ is not divisible by $q$. On the other hand  $c_k,c_i$ are both divisible by $q$,  $c_k,c_j$ are both divisible by $p$ and $c_i,c_j$ are both divisible by $r$.  But then $c_i-c_k$ is  not divisible by any of $p,r$ and $c_j-c_k$ is not divisible by $q$. It follows that $\gcd(m,R,T, c_i-c_k,c_j-c_k)=1$. Consequently, the subgraph $B(m; R, \{c_i, c_j,c_k\}, T)$ is a connected spanning subgraph of $G$ whose vertices are incident to exactly $3$ spokes, since it is isomorphic to the graph $B(m; R, \{0, c_i-c_k, c_j-c_k\}, T)$ by Lemma \ref{lemma:iso}. A contradiction.  Therefore  $\gcd(m, R, T)$ is divisible by at least four distinct primes and the assertion follows.
\end{proof}
\smallskip

\noindent
{\bf Proof of Theorem \ref{thm:primes_3rd_paper}.}
(i) We assume that $m$ is even and a product of at most four prime powers. 
Then the graph $G$ is hamiltonian by Lemma \ref{lemma:primes}
if $m$ is a product of at most three prime powers. If $G$ contains a connected cyclic Haar graph as a spanning subgraph, and in particular
if $G$ is  a cyclic Haar graph, then $G$ is hamiltonian by Theorem \ref{th_generalized_dihedral}. 
Therefore, in the following, we assume that $m$ is a product of exactly four prime powers and $G$ does not contain a connected cyclic Haar graph as a spanning subgraph. Since in this case $\gcd(m, S)>1$, and the graph $G$ is connected, it follows that  $\gcd(m, R, T)$ is a product of at most three prime powers. The existence of a hamilton cycle in $G$ then follows once again from Lemma \ref{lemma:primes}. This proves (i).

\smallskip

\noindent
(ii) We assume that $m=2^{\ell}\,\Pi^4_{i=1}p_i$, where $\ell\in\{1, 2\}$ and each $p_i$ is an odd prime. 
If $G$ contains a connected cyclic Haar graph as a spanning subgraph, possibly coinciding with $G$, then $G$ is hamiltonian by Theorem \ref{th_generalized_dihedral}. 
If $m/2\in R, T$, then  $G$ is hamiltonian by Theorem \ref{th_m/2} . If $\gcd(m, R, T)$ is a product of at most three prime powers, then $G$ is hamiltonian by Lemma \ref{lemma:primes}. We now assume that $\gcd(m, R, T)$ is a product of at least four prime powers, $\gcd(m, S)>1$, $m/2\not\in R, T$ and $|R|=|T|\ge 2$. 
Observe that $\gcd(m, R, T)$ is even if $\ell=1$. 
In fact, if $\gcd(m, R, T)$ were odd, then $\gcd(m, R, T)=\Pi^4_{i=1}p_i=m/2$ and consequently $m/2\in R, T$ since at least one of the inequalities $a\le m/2$ or $-a\le m/2$ holds for every $a\in R\cup T$. That would yield a contradiction, since we are assuming $m/2\not\in R\cup T$. For $\ell=2$, $\gcd(m, R, T)$ could be even or odd.
If $\gcd(m, R, T)$ is even, then $\gcd(m, S)$ is odd, since $G$ is connected. Consequently, $m/\gcd(m, S)$ is even
and the existence of a hamilton cycle in $G$ follows from Theorem \ref{cor_main}. 
It remains to consider the case where $\ell=2$ and $\gcd(m, R, T)$ is odd, i.e., $\gcd(m, R, T)=\Pi^4_{i=1}p_i$.
The sets $R$, $T$ consist of the elements $\pm \Pi^{4}_{i=1}p_i$
since at least one of the inequalities $a< m/2$ or $-a< m/2$ holds for every $a\in R\cup T$. In this case $R=T$ and $G$ is hamiltonian by
Theorem \ref{th_generalized_dihedral}. This completes the proof of (ii). 
\smallskip

\noindent
(iii) The claim follows from Lemma \ref{lemma:primes}; see also \cite[Theorem 1.2]{{BoPiZi2025_2ndpaper}}.

\smallskip

\noindent
(iv) We assume that  $m$ is odd and a product of at most four prime powers, and $\gcd(m,S)>1$. 
Notice that in this case $\gcd(m, R, T)$ is a product of at most three prime powers. The existence of a hamilton cycle in $G$
then follows from Lemma \ref{lemma:primes}. 
\smallskip

\noindent
(v) The assertion follows from Lemma \ref{lemma:primes}, since $m=\Pi^4_{i=1}p_i$ implies that $\gcd(m, R, T)$ is a product of at most three prime powers. 
\medskip

We now show that every connected bicirculant graph $G=B(m;R,S,T)$ of order $2m$, with $m$ even, $m< 9\, 240$ is hamiltonian. Note that $9\,240=2^3\cdot 3\cdot 5\cdot 7\cdot 11$. 
In this case, $m$ is a product of at most five prime powers. If $m$ is a product of at most four prime powers, then the assertion follows from  (i). If $m$ is a product of five prime powers,
then $m$ is of the form $m=2^{\ell}\,\Pi^4_{i=1}p_i$, where $\ell\in\{1,2\}$ and each $p_i$ is an odd prime, or $m=2\cdot 3^2\cdot 5\cdot 7\cdot p$, where $p \in \{11,13\}$.
The assertion follows from (ii) in the first two cases. If $m=2\cdot 3^2\cdot 5\cdot 7\cdot p$, then we can use the same arguments as in the proof of (ii) when $G$ contains a connected cyclic Haar graph spanning its vertices (possibly equal to $G$), or when $m/2\in R, T$ to show that $G$ is hamiltonian.
We can  repeat the same arguments as in the proof of (ii) also when $\gcd(m, R, T)$ is a product of at most three prime powers, or $\gcd(m, R, T)$ is even and a product of at least four prime powers. For the case where $\gcd(m, S)>1$,  $\gcd(m, R, T)$ is odd and a product of four prime powers, that is, $3\cdot 5\cdot 7\cdot p$ is a divisor of $\gcd(m, R, T)$,
we can repeat the arguments of (ii) by setting $\{p_i: 1\le i\le 4\}=\{3, 5, 7, p\}$. Thus, it is proved that every connected bicirculant graph of order $2m$, with $m$ even, $m< 9\, 240$, is hamiltonian.

We now consider  the existence of a hamilton cycle in a connected bicirculant graph $G=B(m;R,S,T)$ of order $2m$, with $m$ odd.
First assume that $m$ is odd, $m< 3\,465$, and the spokes induce a connected subgraph of $G$. Note that $3\,465=3^2\cdot 5\cdot 7\cdot 11$ and since $m< 3\,465$, it follows that $m$ is a product of at most four prime powers.
If $m$ is a product of at most three prime powers, then the existence of a hamilton cycle in $G$ follows from  (iii); it follows from   (v) if $m$ is a product of four prime powers. Thus, it is proved that a connected bicirculant graph of order $2m$, with $m$ odd, $m< 3\,465$, is hamiltonian if its spokes induce a connected subgraph.
Since the product of the first five odd primes is $15\, 015$ and   (iv) holds, we conclude that there exists a hamilton cycle also in  every connected bicirculant graph of order $2m$ with odd $m< 15\, 015$
whose spokes induce a disconnected subgraph.\qed

\section{Conclusions}

In this paper, we have introduced non-uniform representations of connected bicirculant graphs  and, for those that admit such representations, developed techniques for joining hamilton cycles in the connected components of their subgraphs -- specifically, those that are cyclic Haar graphs -- into a hamilton cycle of the whole graph.
These representations and techniques complement the uniform representations and constructions that we introduced in \cite{BoPiZi2025_2ndpaper}. We think they are a powerful tool for proving Conjecture \ref{conject}.

We also point out that Theorem \ref{th_cyclich_Haar_then_bicirculant_ham} reduces the problem on the existence of a hamilton cycle in arbitrary bicirculant graphs to the problem on the existence of a hamilton cycle in cyclic Haar graphs. This fact justifies 
the remark we made in \cite{BoPiZi2025_2ndpaper} that cyclic Haar graphs can be considered `prominent' graphs.

To conclude, we observe that due to Theorem \ref{thm:primes_3rd_paper}, one of the first cases
not covered by our results is, for example, $B(9240; 2310, \{0, 70, 220, 154\}, 1155)$ if $m$ is even; it is
$B(3465; 2310, \{0, 35, 77, 275\}, 1155)$ or $B(15015; 2310, \{0, 455, 3575, 1001\}, 1155)$ if $m$ is odd and
the spokes induce a connected subgraph or not, respectively. Finding a hamilton cycle in these graphs seems to be a challenge because of their size.

\section*{Acknowledgements}
Simona Bonvicini is a member of GNSAGA of Istituto Nazionale di Alta Matematica (INdAM).
Toma\v{z} Pisanski is supported in part by the Slovenian Research Agency (research program P1-0294 and research projects J1-4351, J5-4596 and BI-HR/23-24-012).
Arjana \v Zitnik is supported in part by the Slovenian Research Agency (research program P1-0294 and research projects J1-3002 and J1-4351).
\bigskip

\noindent
\textbf{Author contributions} All authors wrote the main manuscript text and reviewed
the manuscript.\\

\noindent
\textbf{Data Availability Statement} No datasets were generated or analyzed during
the current study.

\section*{Declarations}

\noindent
\textbf{Conflict of interest} The authors declare that they have no conflict of interest.

\newpage


\begin{thebibliography}{99}
\bibitem{Al11983} 
B.\,Alspach, 
The classification of {H}amiltonian generalized {P}etersen   graphs,
\emph{J. Combin. Theory Ser. B} \textbf{34} (1983), 293--312.


\bibitem{AlChDe2010} 
B. Alspach, C.C. Chen,  M. Dean, 
Hamilton paths in {C}ayley graphs on generalized dihedral groups, 
\emph{Ars Math. Contemp. } {\bf 3} (2010), 29--47.

\bibitem{AlspachLiu} B.\,Alspach, J.\,Liu, On the Hamilton connectivity of generalized Petersen graphs, \emph{Discrete Math.} \textbf{309} (2009), 5461–5473.

\bibitem{AlZh1989}
B. Alspach, C. Q. Zhang,  Hamilton cycles in cubic {C}ayley graphs on dihedral groups, \emph{Ars Combin.} \textbf{28} (1989), 101--108.

\bibitem{Taba1}
 A. Arroyo, I. Hubard,K. Kutnar, E. O'Reilly, P. \v{S}parl,  
Classification of symmetric {T}aba\v{c}jn graphs,
\emph{Graphs Combin.} \textbf{31} (2015), 1137--1153.


\bibitem{BeGePi2023a}  L. W. Berman, G.  Gévay, T. Pisanski, The Gray graph is a unit-distance graph, 
\emph{Art Discrete Appl. Math.} \textbf{8} (2025), Article p1.03.


\bibitem{BoPi2003}  M. Boben, T. Pisanski,  Polycyclic configurations, \emph{European J. Combin.} \textbf {24} (2003), 431--457.



\bibitem{BoPi2017} S. Bonvicini,  T. Pisanski, 
A novel characterization of cubic Hamiltonian graphs via the associated quartic graphs. 
\emph{Ars Math. Contemp.}
 \textbf{12} (2017), 1--24.


\bibitem{BoPiZi2025} 
S. Bonvicini, T. Pisanski, A. Žitnik,  
All generalized rose window graphs are hamiltonian, 
\emph{Graphs Combin.} \textbf{42} (2026), Article 27.



\bibitem{BoPiZi2025_2ndpaper}
S. Bonvicini, T. Pisanski, A. Žitnik,  
On the Hamiltonian Bicirculants, 
\arxiv{2510.23420}{math.CO}, 2025.

\bibitem{CoEsPi2018} 
M. Conder, I. Est\'elyi,  T. Pisanski,  
Vertex-transitive Haar graphs that are not Cayley graphs. 
In: Conder, M., Deza, A., Weiss, A. (eds)  \emph{Discrete geometry and symmetry}, volume 234 of 
\emph{ Springer Proc. Math. Stat.} pages 61--70. Springer,  Cham, 2018. 



\bibitem{Foster}
I. Z. Bouwer, W. W. Chernoff, B. Monson, Z. Star,
The Foster Census,
Charles Babbage Research Centre, 1988.


\bibitem{GT}
J. L. Gross, T. W. Tucker,
\emph{Topological Graph Theory},
Dover books on mathematics, Dover Publications, 2001.


\bibitem{Gr2009} B. Grünbaum, Configurations of points and lines.
  In \emph{Graduate Studies in Mathematics} volume 103.
American Mathematical Society, Providence, RI, 2009.


\bibitem{HlMaPi2002}
M. Hladnik, D. Maru\v{s}i\v{c},T. Pisanski, 
Cyclic {H}aar graphs,
\emph{Discrete Math.} \textbf{244} (2002),137--152.



\bibitem{HolStr1978}
W. Holszty\'{n}ski, R. F. E. Strube, 
Paths and circuits in finite groups,
\emph{Discrete Math.} \textbf{22} (1978), 263--272.


\bibitem{JeJaPi2020}  K. Jasen\v cakov\'{a}, R. Jajcay, T. Pisanski,  A new generalization of generalized Petersen graphs, 
\emph{Art Discrete Appl. Math.} \textbf{3} (2020), Paper No. 1.04, 20.


\bibitem{Taba2}
K. Kutnar, D. Maru\v{s}i\v{c}, \v{S}. Miklavi\v{c}, R.  Stra\v{s}ek,  
Automorphisms of {T}aba\v{c}jn graphs,
\emph{Filomat} \textbf{27} (2013), 1157--1164.
 
\bibitem{MALNIC2007891}
A. Malni\v c, D. Maru\v{s}i\v{c}, P. \v Sparl,
On strongly regular bicirculants,
\emph{Eur. J. Comb.} \textbf{28} (2007), 891--900.


\bibitem{Ma1981} D. Maru\v{s}i\v{c}, On vertex symmetric digraphs,  \emph{Discrete Math.} \textbf{36} (1981), 69--81.

\bibitem{Pi2007}
T.  Pisanski, 
A classification of cubic bicirculants,
\emph{Discrete Math.} \textbf{307} (2007), 567--578.

\bibitem{PiSe2013}  T. Pisanski, B. Servatius, Configurations from a graphical viewpoint.
    \emph{Birkhäuser Advanced Texts: Basler Lehrbücher.}
    [Birkhäuser Advanced Texts: Basel Textbooks].
 Birkhäuser/Springer, New York, 2013.
    

\bibitem{wilsonRW}
S.~Wilson, Rose window graphs, 
\emph{Ars. Math. Contemp.} \textbf{1} (2008), 7--19.

\end{thebibliography}
\end{document}